\def\RR{\mathbb{R}}
\def\div{\mathrm {div}\,}
\def\of{\bar{f}}
\def\bO{\bar{\Omega}}
\def\dO{\pa\Omega}
\def\Delsv{\big(-\Delta_v\big)^s}
\def\Dels{\big(-\Delta\big)^s}
\def\DelsSR{(-\Delta)_{\text{\tiny{SR}}}^s}
\def\iRn{\int_{\RR^d}}
\def\iRd{\int_{\RR^d}}
\def\d{\, {\rm{d}} }
\def\ds{\displaystyle}
\def\bv{\underline{v}}
\def\bz{\underline{z}}
\def\bw{\underline{w}}
\def\pa{\partial}
\def\na{\nabla}
\def\eps{\varepsilon}
\def\HSRs{\mathcal{H}_{\textrm{\tiny{SR}}}^s }
\def\HSR2s{\mathcal{H}_{\textrm{\tiny{SR}}}^{2s} }
\def\Lsr*{\mathscr{L}_{\textrm{\tiny{SR}}}^*}
\def\ds{\displaystyle}
\def\Hdiff0{\mathcal{H}_{\textrm{\tiny{diff}},0}^s }
\theoremstyle{plain}
\newtheorem{thm}{Theorem}[section]
\newtheorem*{thm*}{Theorem}
\newtheorem{lemma}[thm]{Lemma}
\newtheorem*{lemma*}{Lemma}
\newtheorem{prop}[thm]{Proposition}
\newtheorem*{prop*}{Proposition}
\newtheorem{defi}{Definition}[section]
\newtheorem*{defi*}{Definition}
\newtheorem{rmq}[thm]{Remark}
\newtheorem*{rep@theorem}{\rep@title}
\newcommand{\newreptheorem}[2]{%
\newenvironment{rep#1}[1]{%
 \def\rep@title{#2 \ref{##1}}%
 \begin{rep@theorem}}%
 {\end{rep@theorem}}}
\title{Anomalous diffusion limit of kinetic equations in spatially bounded domains}
\date{\today}
\author{Ludovic Cesbron\footnote{DPMMS, Center for Mathematical Sciences, University of Cambridge,  
Wilberforce Road, Cambridge CB3 0WB UK. Email: lpc31@cam.ac.uk } }
\begin{document}
\maketitle

\begin{abstract}
This paper is devoted to the anomalous diffusion limit of kinetic equations with a fractional Fokker-Planck collision operator in a spatially bounded domain. We consider two boundary conditions at the kinetic scale: absorption and specular reflection. In the absorption case, we show that the long time/small mean free path asymptotic dynamics are described by a fractional diffusion equation with homogeneous Dirichlet-type boundary conditions set on the whole complement of the spatial domain. On the other hand, specular reflections will give rise to a new operator which we call specular diffusion operator and write $\DelsSR$. This non-local diffusion operator strongly depends on the geometry of the domain and includes in its definition the interaction between the diffusion and the boundary. We consider two types of domains: half-spaces and balls in $\RR^d$. In these domains, we prove properties of the specular diffusion operator and establish existence and uniqueness of weak solutions to the associated heat-type equation.
\end{abstract}

\textbf{Keywords :} Kinetic equations, anomalous diffusion limit, bounded domains, non-local diffusion, Fokker-Planck operator, absorption boundary condition, specular reflection, fractional heat equation, fractional Laplacian, free transport equation, moment method...

\tableofcontents

\section{Introduction}

Because of the non-local nature of fractional diffusion, it is not clear how it should interact with a boundary. The confinement of non-local diffusion processes raises a lot of questions and has received a growing interest in recent years from both the points of view of stochastic analysis, see for instance \cite{Bogdan+2003} \cite{Chen02}, and partial differential equations, see e.g. \cite{GuanMa05}, \cite{Felsinger13}, \cite{Mou15},\cite{DiPierro17}. The purpose of this paper is to derive such confinements. Our approach consists in considering the confined non-local diffusion equation as an anomalous limit of confined kinetic equations. As a result, the interaction between the non-local diffusion phenomena and the spatial boundaries will be entirely deduced from the kinetic setting where there are no ambiguities in the definition of boundary conditions. We believe that this method, since it conserves the physical relevance of the kinetic models, is a promising step towards determining fractional equivalents to the Dirichlet and Neumann-type boundary conditions for classical heat equations.\\
More precisely, we present in this paper the derivation of fractional diffusion equations on spatially bounded domain from kinetic equations with a fractional Fokker-Planck collision operator. This setting is particularly relevant due the fact that those kinetic equations feature a non-local collision operator that acts solely on the velocities of the particles which are unbounded. As a result, although we already have an explicit non-local operator at the kinetic scale, its interaction with the spatial boundary will only arise as we look at the anomalous diffusion limit. \\
We investigate the long time/small mean-free-path asymptotic behaviour of the solution of the fractional Vlasov-Fokker-Planck (VFP) equation:
\begin{subequations}{\label{eq:meso0}}
\begin{align}
&\pa_t f +  v\cdot \na_x f  =  \na_v\cdot (vf) -(-\Delta_v)^s f  & \mbox{ in } \RR^+\times\Omega\times\RR^d,\label{eq:vlfp0}\\
&f(0,x,v) = f_{in} (x,v) &\mbox{ in } \Omega\times\RR^d,  \label{eq:vlfpid0}
\end{align}
\end{subequations}
for $s\in(0,1)$ on a smooth convex domain $\Omega$. We introduce the oriented set: 
\begin{equation} \label{def:sigma}
\Sigma_\pm = \lbrace (x,v) \in \Sigma; \pm n(x)\cdot v >0 \rbrace \text{ with } \Sigma = \dO \times \RR^d
\end{equation}
where $n(x)$ is the outgoing normal vector and we denote by $\gamma f$ the trace of $f$ on $\RR^+\times\dO\times\RR^d$. The boundary conditions then take the form of a balance between the values of the traces of $f$ on these oriented sets $\gamma_\pm f := \mathds{1}_{\Sigma_\pm} \gamma f$. We will consider two types of conditions introduced by J. C. Maxwell in the appendix of \cite{Maxwell} in 1879:
\begin{itemize}
\item The absorption boundary condition (also called zero inflow) : for all $(x,v)\in\Sigma_-$
\begin{equation} \label{eq:Aop}
\gamma_-f(t,x,v) = 0
\end{equation}
\item The local-in-velocity reflection operator called {\it specular reflection}: for all $(x,v)\in\Sigma_-$ 
\begin{equation}\label{eq:SRop}
\gamma_-f(t,x,v) = \gamma_+f\big(t,x,\mathcal{R}_x(v)\big)
\end{equation} 
where $\mathcal{R}_x(v) = v-2\big(n(x)\cdot v \big)n(x)$ which is illustrated in Figure \ref{fig:SpecRef}.
\end{itemize}
\begin{figure}[h]
\centering
\includegraphics[width=11cm,height=9cm]{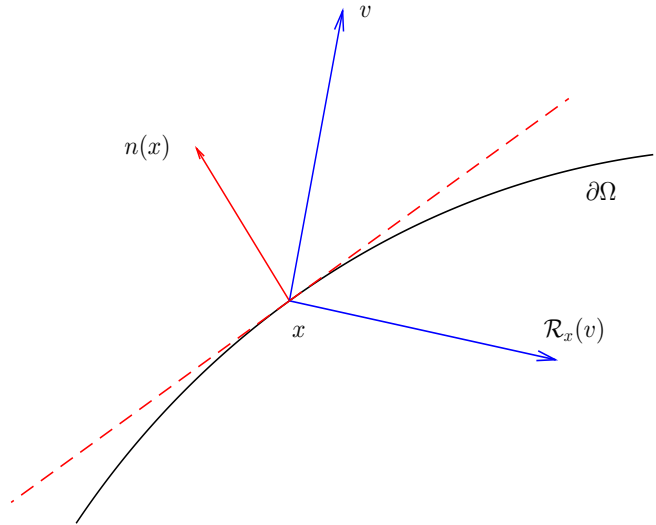}
\caption{Specular reflection operator}
\label{fig:SpecRef}
\end{figure}

The fractional VFP equation models the evolution of the distribution function $f(t,x,v)$ of a cloud of particles in a plasma. The left hand side of \eqref{eq:vlfp0} models the free transport of the particles, while on the right hand side the fractional Fokker-Planck operator 
\begin{equation} \label{def:LFPop}
\mathcal{L}^s f = \na_v \cdot (vf) -(-\Delta_v)^s f
\end{equation}
describes the interactions of the particles with the background. It can be interpreted as a deterministic description of a Langevin equation for the velocity of the particles: $\dot{v}(t) = -v(t) + A(t)$, where $A(t)$ is a white noise. This model describes the evolution of the velocity of a particle as the result of two phenomena, a viscosity-like interaction that causes the velocity to slow down and a white noise that causes it to jump at random times which can be interpreted as the consequence of the interaction between the particle and the background. The classical Fokker-Planck operator corresponds to $s = 1$ and arises when $A(t)$ is a Gaussian white noise. In that case, equilibrium distributions (solutions of $\mathcal{L}^1 M = 0$) are Maxwellian (or Gaussian) velocity distributions: $M = C \exp(-|v|^2/2)$. However, some experimental measurements of particles and heat fluxes in confined plasma point to non-local features and non-Gaussian distribution functions, see section 2 in the introduction of \cite{CesbronThesis} for more details. The introduction of L\'{e}vy statistic in the velocity equation (replacing the Gaussian white noise by L\'{e}vy white noise in the Langevin equation) can be seen as an attempt at taking into account these non-local effects in plasma turbulence. 
%For more details on the physical interpretation of this equation the interested reader can see, for instance, \cite{Schertzer2001}.\\

In order to study the long time/small mean free path asymptotic behaviour of the solutions of the fractional VFP equation, we introduce the Knudsen number $\eps$ which represents the ratio of the mean-free-path to the macroscopic length scale, or equivalently the ratio of the mean time between two collisions to the macroscopic time scale. We use this $\eps$ to rescale the time variable as
\begin{equation} \label{eq:scale}
t' = \eps^{2s-1} t.
\end{equation}
Moreover, we also introduce $1/\eps$ as a factor of the fractional Fokker-Planck operator to model the mean-free-path growing smaller as a consequence of the number of collisions per unit of time increasing. Hence, we consider the following scaling of \eqref{eq:vlfp0}-\eqref{eq:vlfpid0}:
\begin{subequations}{\label{eq:meso}}
\begin{align}
&\eps^{2s-1}\pa_t f^\eps +  v\cdot \na_x f ^\eps = \frac{1}{\eps} \mathcal{L}^s \big( f^\eps\big) & \mbox{ in } \RR^+\times\Omega\times\RR^d,\label{eq:vlfp}\\
&f^\eps(0,x,v) = f_{in} (x,v) &\mbox{ in } \Omega\times\RR^d.  \label{eq:vlfpid}
\end{align}
\end{subequations}
and investigate the behaviour of the solution $f^\eps$ as $\eps$ goes to $0$.\\

In the non-fractional framework, the first papers concerned with the relation between the VFP equations on the whole space and diffusion equations can be found in the late 70's and early 80's with the work of D'Arruda, Larsen \cite{DArrudaLarsen78} and also Beals, Protopopescu \cite{BealsProtopopescu83} where they prove diffusion limits in a perturbative settings; as well as the work of Bardos, Santos, Sentis \cite{bardos1984diffusion} in 84 where they lay down the theoretical basis for diffusion limits in general. More recently, several works have been concerned with the diffusion limits of the more elaborate Vlasov-Poisson-Fokker-Planck systems. For instance, in 2000, Poupaud and Soler in \cite{PoupaudSoler} prove the diffusion limit under parabolic scaling,  which is exactly \eqref{eq:vlfp} with $s=1$, for a small enough time interval. It is easy to see that their results imply, for the VFP equation, that the solution $f^\eps$ converges, as $\eps$ goes to $0$, to $\rho (t,x) M(v)$ where $M$ is the Maxwellian equilibrium of the Fokker-Planck operator and $\rho$ is the limit of the density $\rho^\eps = \int f^\eps \d v$ and satisfies a Heat equation. Their results were then extended (still in the Poisson case) in 2005 by Goudon \cite{Goudon05} to a global in time convergence in dimension 2 with bounds on the entropy and energy of the initial data as to ensure that they don't develop singularities in the limit system, and later in 2010 by El Ghani and Masmousi \cite{ElGhani10} who proved the global in time convergence in higher dimensions with similar initial bounds. \\

In the fractional framework, Biler, Karch \cite{Biler+2003} and Gentil, Imbert \cite{Gentil+2008} investigate the long-time behaviour of L\'{e}vy-Fokker-Planck equations
\begin{equation} \label{eq:LevyFokkerPlanck}
\pa_t f = \div(f \na\phi ) + \mathcal{I}\big[ f\big] 
\end{equation}
where $\mathcal{I}$ is the infinitesimal generator of a L\'{e}vy process. This family of operators includes the fractional Fokker-Planck operator since the fractional Laplacian of order $s$ is the generator of a particular $2s$-stable L\'{e}vy process whose characteristic exponent is $|\xi |^{2s}$. Biler and Karch prove convergence of the solution of \eqref{eq:LevyFokkerPlanck} to the unique normalised equilibrium of the L\'{e}vy-Fokker-Planck operator, later improved by Gentil and Imbert to exponential convergence in a weighted $L^2$ space where the weight is prescribed by the equilibrium. Their proofs use entropy production methods and a modified logarithmic Sobolev inequality which we will use later on to establish a priori estimates on the solutions of the fractional VFP equation in a similar weighted $L^2$ space. In \cite{GuanMa06} and references within, Guan and Ma give a description of this equilibrium and proofs that it is, in particular, heavy-tailed, as stated below in Proposition \ref{prop:eqLFP}.\\
This characterisation of the equilibrium of the fractional Fokker-Planck operator and the entropy production method allowed the author with A. Mellet and K. Trivisa to establish in \cite{Cesbron12} the anomalous diffusion limit of the fractional VFP equation. More precisely, we proved the following result:
\begin{thm*}[Theorem 1.2 in \cite{Cesbron12}]\label{thm:main}
Assume that $ f_0 \in L^2_{F^{-1}}(\RR^d\times\RR^d)$ where $F(v)$ is the normalised heavy-tailed equilibrium of the fractional Fokker-Planck operator.  
Then, up to a subsequence, the solution $f^\eps$ of the rescaled fractional VFP equation on the whole space \eqref{eq:vlfp}-\eqref{eq:vlfpid} converges weakly in
$L^\infty(0,T;L^2_{F^{-1}}(\RR^d\times\RR^d))$, as $\eps\to 0$ to  $\rho(t,x) F(v)$ where $\rho(t,x)$ solves
\begin{subequations}\label{eq:adiff}
\begin{align} 
&\pa_t \rho + (-\Delta_x)^s \rho = 0 &\mbox{ in } (0,\infty)\times\RR^{d} & \\[5pt]
&\rho(0,x)=\rho_0(x) & \mbox{ in }  \RR^{d} &
\end{align}
\end{subequations}
with $\ds \rho_0(x)=\int f_0(x,v)\, dv$.
\end{thm*}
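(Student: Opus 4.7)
The plan is to follow the moment method typical of anomalous diffusion limits, proceeding in three steps: uniform a priori estimates, weak compactness with identification of the velocity profile, and identification of the macroscopic equation via a corrected test function. First, I would establish uniform bounds on $f^\eps$ by multiplying \eqref{eq:vlfp} by $f^\eps F^{-1}(v)$ and integrating over $\RR^d \times \RR^d$. The transport term vanishes on the whole space, while the dissipation of $\mathcal{L}^s$ in the weighted $L^2_{F^{-1}}$ setting---based on the modified logarithmic Sobolev / Poincaré inequality of Gentil--Imbert---produces a non-negative functional $\mathcal{D}(f^\eps)$ controlling the distance from $f^\eps$ to the equilibrium manifold $\{\rho F : \rho \geq 0\}$. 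This yields
\begin{equation*}
\frac{\eps^{2s-1}}{2}\frac{d}{dt}\|f^\eps\|^2_{L^2_{F^{-1}}} + \frac{1}{\eps}\mathcal{D}(f^\eps) \leq 0,
\end{equation*}
providing both the uniform bound in $\LF$ and the quantitative relaxation $\int_0^T \mathcal{D}(f^\eps)\, dt = O(\eps^{2s})$. Banach--Alaoglu then extracts a subsequence $f^\eps \rightharpoonup f$ weakly-$\ast$ in $\LF$ with $\rho^\eps = \int f^\eps \, dv \rightharpoonup \rho$, and the dissipation bound forces $f(t,x,v) = \rho(t,x) F(v)$.

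Next, for any $\phi \in C_c^\infty([0,T) \times \RR^d)$, I would construct a corrected test function $\psi^\eps(t,x,v) = \phi(t,x) + \eps\, \chi^\eps(t,x,v)$ designed such that, to leading order,
\begin{equation*}
v \cdot \na_x \psi^\eps + \frac{1}{\eps}(\mathcal{L}^s)^\ast \psi^\eps \;\approx\; -\eps^{2s-1}(-\Delta_x)^s \phi.
\end{equation*}
Testing \eqref{eq:vlfp} against $\psi^\eps$ and dividing by $\eps^{2s-1}$, the transport and collision contributions collapse via this relation, so using $f^\eps \rightharpoonup \rho F$ and $\int F\, dv = 1$, the limit $\eps \to 0$ produces
\begin{equation*}
\int_0^T\!\!\int_{\RR^d} \rho\, \pa_t \phi\, dx\, dt + \int_{\RR^d} \rho_0\, \phi(0, \cdot)\, dx - \int_0^T\!\!\int_{\RR^d} \rho\, (-\Delta_x)^s \phi\, dx\, dt = 0,
\end{equation*}
which is the weak formulation of \eqref{eq:adiff}. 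Uniqueness of the limit then justifies that the convergence holds along the full family, not just a subsequence.

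The hard part is the construction of $\chi^\eps$ and the justification that the corresponding correction term converges to the fractional Laplacian symbol $|\xi|^{2s}$. Because $F(v) \sim |v|^{-d-2s}$ by Proposition~\ref{prop:eqLFP} has infinite second moment, the classical Hilbert expansion diverges and one cannot recover $-\Delta_x$; the correct anomalous symbol must emerge from a careful Fourier analysis in $x$. Passing to $\hat\psi^\eps(t,\xi,v)$ reduces the auxiliary equation to a one-parameter family of stationary problems in $v$ alone, parametrised by $\xi$, whose $\eps \to 0$ behaviour is controlled precisely by the heavy tails of $F$ and yields the symbol $|\xi|^{2s}\hat\phi(\xi)$ through the balance between $v \cdot \xi$ and $\mathcal{L}^s$. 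Controlling $\chi^\eps$ and its velocity tails uniformly in $\eps$, so that the above convergences are legitimate in the weighted $L^2_{F^{-1}}$ duality, is the principal technical obstruction.
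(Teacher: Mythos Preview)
Your first step (the weighted $L^2_{F^{-1}}$ energy estimate, the dissipation bound, and the identification of the weak limit as $\rho F$) is correct and matches exactly what the paper does in Proposition~\ref{prop:weakCV}.

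The identification of the limiting equation, however, takes a genuinely different route from the paper. You propose a Hilbert-type corrector $\psi^\eps = \phi + \eps\chi^\eps$ and then acknowledge that the classical expansion diverges because $F$ has infinite second moment, leaving you with a delicate Fourier construction of $\chi^\eps$ whose control you flag as the ``principal technical obstruction''. The paper's method (inherited from \cite{Cesbron12} and visible in Section~\ref{section:abs}) sidesteps this entirely: instead of a perturbative corrector, one takes the \emph{exact} test function
\[
\phi^\eps(t,x,v)=\psi(t,x+\eps v),
\]
which solves the auxiliary problem $\eps v\cdot\nabla_x\phi^\eps - v\cdot\nabla_v\phi^\eps=0$ with $\phi^\eps(t,x,0)=\psi(t,x)$. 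This kills the term \eqref{eq:wfAcc} exactly (not just to leading order), and the key observation is the scaling identity
\[
\Delsv\big[\psi(t,x+\eps v)\big]=\eps^{2s}\,\Delsx\psi(t,x+\eps v),
\]
which makes the $\eps^{-1}$ in front of the collision term combine with $\eps^{2s-1}$ to give an $O(1)$ contribution $\Delsx\psi(t,x+\eps v)$. Passing to the limit then only requires the strong convergence $\Delsx\psi(t,x+\eps v)\to\Delsx\psi(t,x)$ in $L^2_{F(v)}$, which is elementary dominated convergence.

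What each approach buys: the paper's method is shorter and avoids any asymptotic analysis of an auxiliary cell problem---the fractional Laplacian appears directly from the scaling property rather than from the heavy tails of $F$. Your approach is closer in spirit to the moment method used for linear Boltzmann or scattering operators (as in Mellet--Mischler--Mouhot), and could in principle work, but the construction and control of $\chi^\eps$ that you leave open is precisely the step the paper's exact test function makes unnecessary.
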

Note that we use here and throughout this paper the notation $L^2_{\mu} (\RR^d)$ for the $L^2$ space with weight $\mu$. \\
We can see how this result compares to the aforementioned diffusion limit of the classical Vlasov-Fokker-Planck. However, the method used in \cite{Cesbron12} to derive this asymptotic behaviour is quite different from what is done is the non-fractional case, and rests upon the particular structure of the fractional VFP equation. Indeed, and this will be essential for the rest of this paper, if we consider the Fourier transform of \eqref{eq:vlfp0} in $x$ and $v$ (respective Fourier variables $k$ and $\xi$) on $\RR^d\times\RR^d$ we get the following PDE:
\begin{equation} \label{eq:Fourier}
\pa_t \hat{f}(t,k,\xi) + (k-\xi)\cdot \na_\xi \hat{f}(t,k,\xi) = -|\xi|^{2s} \hat{f}(t,k,\xi).
\end{equation}
This PDE is scalar-hyperbolic so if we follow well-chosen {\it characteristic lines}, it becomes an ODE which can be solved explicitly. The main idea of \cite{Cesbron12} is to transpose these {\it characteristic lines} in a non-Fourier setting in order to derive fractional diffusion. The method we developed in the present work is inspired from the same idea but confined to bounded domains.

Kinetic equations on bounded domains, because of their obvious physical relevance, have always received a lot of attention. There have been many works concerning existence of global weak solutions on bounded domains with absorbing-type or reflection-type boundary conditions. We would like to mention the work of Carrillo \cite{Carrillo98} on the VPFP system, as well as the work of Mellet and Vasseur \cite{MelletVasseur} for the VFP equation coupled to compressible Navier-Stokes via drag force, because their techniques could be generalised to the fractional VFP equation with some modifications to handle the non-local property of the diffusion operator and we will indeed follow the line of reasoning of \cite{Carrillo98} to prove well-posedness of the specular diffusion equation in section \ref{sec:wellposedness}.\\

Hydrodynamical and diffusion limits in bounded domains have also been the subject of many works. For instance, in 1987, Degond and Mas-Gallic \cite{Degond1987} established the first rigorous diffusion limit for the (classical) VFP equation in 1 dimension on a bounded domain. This result has been improved many times (cf. references within \cite{WuLinLiu}), and in 2015 Wu, Lin and Liu proved in \cite{WuLinLiu} that the diffusion limit of a VPFP system for multiple species charged particles with reflection boundary conditions is a Poisson-Nernst-Planck system with homogeneous Neumann boundary conditions. Other examples of macroscopic limits are the work Masmoudi and Saint-Raymond who, in 2003, showed in \cite{Masmoudi03} that the Boltzmann equation with Maxwell boundary conditions converges to the Stokes-Fourier system with Navier boundary conditions, or, more recently, the work of Jiang, Levermore and Masmoudi who established in \cite{Jiang09} the acoustic limit for DiPerna-Lions solutions and recovered impermeable boundaries for the acoustic system. \\

Before stating our main results, let us present properly the fractional Laplacian and give some well-known properties of this operator and the associated fractional Fokker-Planck operator.

\subsection{Preliminaries on the fractional Fokker-Planck operator} \label{subsec:def}
The fractional Laplacian can be defined as a pseudo-differential operator of symbol $|\xi|^{2s}$ which can be written in Fourier transform as:
\begin{equation} \label{def:delsFourier}
\mathcal{F} \Big[ \Dels f(\xi) \Big] = |\xi|^{2s}\mathcal{F} [ f ] (\xi).
\end{equation}
Much like the Laplace operator is the infinitesimal generator of a Brownian motion, the fractional Laplacian is the generator of a L\'{e}vy process. More precisely, it is the generator of a L\'{e}vy process  $V_t$ whose transition density $\rho(t,y-x)$ relative to the Lebesgue measure is given in Fourier by:
\begin{equation*}
\iRd e^{i v\cdot \xi} \rho(t,v) \d v = e^{-t |\xi |^{2s}}.
\end{equation*}
The fractional Laplacian can also be written as a singular integral, which will be most useful in the PDE framework:
\begin{equation} \label{def:Dels}
\Dels f(v) = c_{s,d} \text{P.V.} \iRn \frac{f(v)-f(w)}{|v-w|^{d+2s}} \d w
\end{equation}
where $c_{s,d}$ is a constant depending on $s$ and the dimension $d$ given by:
\begin{equation} \label{def:cns}
c_{d,s} = \bigg( \underset{\RR^d}{\int} \frac{1-\cos (\zeta_1)}{|\zeta|^{d+2s}} \,\text{d}\zeta \bigg)^{-1}.
\end{equation} 
The properties of this operator have been studied in 2007 by Silvestre in \cite{Silvestre2007} and more recently by DiNezza, Palatucci and Valdinoci in \cite{DiNezza+} where they focus on the link between $\Dels$ and the fractional Sobolev spaces $H^s(\RR^d)$. \\
As mentioned before, the interaction between the non-locality of the fractional Laplacian and the boundary of a domain raises a lot of questions. In 2003, Bogdan,Burdzy and Chen introduced in \cite{Bogdan03} the notion of reflected $2s$-stable processes, which are the restriction of a $2s$-stable process, such as $V_t$ defined above, to a open set $\Omega$ in $\RR^d$. In particular, they define the killed process, constructed by adding a coffin state $\pa$ to $\RR^d$ and defining $W_t$, the killed process associated with $V_t$, as:
\begin{equation*}
W_t(\omega) = \left| \begin{aligned}  & V_t (\omega)  \mbox{ for } t\leq t_\Omega(\omega) \\
															 & \pa \mbox{ for } t > t_\Omega(\omega)
						  \end{aligned} \right. 
\end{equation*}
where $t_\Omega := \inf \lbrace t>0 : V_t \notin \Omega\rbrace$ is the first exit time. The Dirichlet form of this process on $L^2(\Omega,dx)$ is $(\mathcal{C}, \mathcal{F}^\Omega)$ defined as:
\begin{align*}
&\mathcal{F}^\Omega = \bigg\{ f\in L^2(\RR^d) :  \underset{\RR^d\times\RR^d}{\iint} \frac{\big( f(x)-f(y)\big)^2}{|x-y|^{d+2s}} \d x \d y <\infty \mbox{ and } f = 0 \mbox{ q.e. on } \RR^d\setminus\Omega \bigg\} \\
&\mathcal{C}(f,g) = \frac{1}{2} c_{d,s} \underset{\Omega\times\Omega}{\iint} \frac{\big( f(x)-f(y)\big)\big( g(x)-g(y)\big)}{|x-y|^{d+2s}}  \d x \d y + \underset{\Omega}{\int} f(x) g(x) \kappa_\Omega(x) \d x
\end{align*}
where q.e. means quasi everywhere and $\kappa_\Omega$ is the density of the killing measure of $W_t$ given by:
\begin{equation*}
\kappa_\Omega (x) = c_{d,s} \underset{\RR^d\setminus\Omega}{\int} \frac{1}{|x-y|^{d+2s}} \d y.
\end{equation*}
They also define more general reflected processes by extending the lifetime of the process beyond $t_\Omega$. The killed process has a direct link with the PDE approach to fractional Laplacian on bounded domain. Indeed, in 2014, Felsinger, Kassmann and Voigt considered in \cite{Felsinger13}, the Dirichlet problem for non-local operators which, in case of the fractional Laplacian, reads:
\begin{equation} \label{eq:dirfraclap}
\begin{array}{rll}
\Dels f &= u  \hspace{0.5cm} &\mbox{ in } \Omega \\
f &= g &\mbox{ on } \RR^d\setminus\Omega.
\end{array}
\end{equation}
They introduced the Hilbert space $H_\Omega\big(\RR^d; \frac{1}{|x-y|^{d+2s}} \big)$, which is exactly the space $\mathcal{F}^\Omega$ defined above, provided with the norm $\lVert f\lVert_{L^2(\RR^d)} + \mathcal{C}(f,f)$. They wrote a variational formulation of the Dirichel problem \eqref{eq:dirfraclap} in that Hilbert space and proved existence and uniqueness of solutions. Note that their results actually include a large family of non-local operators, we stated it here for the fractional Laplacian since it is the subject of this paper, but their work goes far beyond. For regularity results on the solutions of the homogeneous Dirichlet problem with fractional Laplacian inside the domain and up to the boundary, we refer the reader to Grubb \cite{Grubb2013} and Ros-Oton, Serra \cite{RosOton14}.

The fractional Fokker-Planck operator $\mathcal{L}^s$ has been introduced as a generalization of the classical Fokker-Planck operator for general L\'{e}vy stable processes in 2000 by Yanovsky, Chechkin, Schertzer and Tur \cite{Yanovsky} and the following year it was derived from the wider class of non-linear Langevin-type equation driven by a L\'{e}vy stable noise by Schertzer Larchevêque Duan Yanovsky and Lovejoy in \cite{Schertzer2001}. \\
In the present paper, the most crucial property of the fractional Fokker-Planck operator will be the fact that its thermodynamical equilibrium is a L\'{e}vy stable distribution i.e. a heavy-tailed distribution, instead of the Maxwellian distribution that arise in the non-fractional setting. The explicit solution in Fourier transform of the equation $\mathcal{L}^s F = 0$ yields the following result
\begin{prop} \label{prop:eqLFP}
For $s\in (0,1)$ and $\nu > 0$, there exists a unique normalized equilibrium distribution function $F(v)$, solution of
\begin{equation}\label{eq:heavytailLFP}\mathcal L^s(F)=\nu \na_v \cdot (vF) -(-\Delta_v)^s F=0, \qquad \int_{\RR^d} F(v)\, dv =1. 
\end{equation}
Furthermore, $F(v)>0$ for all $v$, and $F$ is a  heavy-tailed distribution  function satisfying 
$$ F(v)\sim\frac{C}{|v|^{d+2s}}\qquad \mbox{ as } |v|\to\infty.$$
\end{prop}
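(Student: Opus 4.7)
The plan is to take the Fourier transform of the stationary equation $\mathcal{L}^s F = 0$ and exploit the fact that in Fourier variables it becomes a first-order linear PDE that can be solved explicitly along rays. Writing $\widehat F(\xi)$ for the Fourier transform of $F$, the identities $\widehat{\Delsv F}(\xi) = |\xi|^{2s}\widehat F(\xi)$ and $\widehat{\na_v\cdot(vF)}(\xi) = -\xi\cdot\na_\xi\widehat F(\xi)$ turn \eqref{eq:heavytailLFP} into
\begin{equation*}
\nu\,\xi\cdot\na_\xi\widehat F(\xi) + |\xi|^{2s}\,\widehat F(\xi) = 0.
\end{equation*}

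The next step is to integrate this PDE along rays $\xi = r\omega$, $r \geq 0$, $\omega \in S^{d-1}$. Setting $\phi_\omega(r) := \widehat F(r\omega)$, the equation reduces to the scalar ODE $\nu\, r\,\phi_\omega'(r) = -r^{2s}\phi_\omega(r)$, whose solution is $\phi_\omega(r) = \phi_\omega(0)\exp\bigl(-r^{2s}/(2s\nu)\bigr)$. Since $F \in L^1(\RR^d)$ its Fourier transform is continuous at the origin, so $\phi_\omega(0)$ must be independent of the direction $\omega$, and the normalization $\iRd F(v)\,dv = 1$ fixes $\widehat F(0) = 1$. This simultaneously yields existence and uniqueness in the explicit form
\begin{equation*}
\widehat F(\xi) = \exp\!\Bigl(-\tfrac{|\xi|^{2s}}{2s\nu}\Bigr).
\end{equation*}

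The last step is to recover $F$ by inverse Fourier transform and read off its qualitative properties. Since $\widehat F$ is the characteristic function of a symmetric $2s$-stable L\'evy distribution, $F$ is a genuine probability density on $\RR^d$; in particular $F>0$ everywhere. For the tail behaviour, one can use the scaling relation $F(v) = (2s\nu)^{-d/(2s)}\,p_1\bigl((2s\nu)^{-1/(2s)}v\bigr)$, where $p_1$ denotes the density at time $1$ of the standard isotropic $2s$-stable process, and invoke the classical asymptotic $p_1(v) \sim c_{d,s}/|v|^{d+2s}$ as $|v|\to\infty$ given in \cite{GuanMa06}.

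The main technical obstacle is this last step: the heavy-tail decay of $F$ cannot be read off the explicit formula for $\widehat F$ by elementary means, because $\widehat F$ is analytic away from the origin but has a singular expansion $\widehat F(\xi) = 1 - |\xi|^{2s}/(2s\nu) + O(|\xi|^{4s})$ near $\xi=0$, and it is precisely this non-smoothness at the origin that dictates the algebraic decay in $v$. Rather than reprove this well-known sharp asymptotic for symmetric stable densities, I would simply quote \cite{GuanMa06} at this final stage.
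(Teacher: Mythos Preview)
Your proposal is correct and follows exactly the approach the paper indicates: the paper does not give a detailed proof but states that ``the explicit solution in Fourier transform of the equation $\mathcal{L}^s F = 0$ yields the following result'' and then refers to \cite{AcevesCesbron} and \cite{GuanMa06} for further details. Your write-up is precisely a fleshed-out version of that sketch, including the same reference to \cite{GuanMa06} for the heavy-tail asymptotics.
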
 
For a more detailed presentation of the equilibrium of $\mathcal{L}^s$ we refer the reader to \cite{AcevesCesbron} and references within.

\subsection{Main Results}

Throughout this paper, for any $T>0$ we write $Q_T = [0,T)\times\bO\times\RR^d$ and $\Sigma= \dO \times \RR^d$ as mentioned in \eqref{def:sigma}. Also, we will write $L^p (\Sigma_\pm)$ the Lebesgue space associated with the norm:
\begin{equation} \label{def:Lpsig}
\lVert \gamma_\pm f \lVert_{L^p(\Sigma_\pm)} = \bigg( \underset{\Sigma_\pm}{\iint} | \gamma_\pm f |^p \big( n(x)\cdot v \big) \d \sigma(x) \d v \bigg)^{1/p}
\end{equation}
As usually in the framework of fractional Vlasov-Fokker-Planck equations we use the following definitions of weak solutions
\begin{defi} \label{def:weaksolDir}
We say that $f$ is a weak solution of the fractional VFP equation with Dirichlet type boundary conditions \eqref{eq:vlfp0}-\eqref{eq:vlfpid0}-\eqref{eq:Aop} on $[0,T)$ if
\begin{equation}\label{eq:regsolA}
\begin{aligned}
&f(t,x,v) \geq 0 \hspace{1cm} \forall (t,x,v)\in [0,T)\times\Omega\times\RR^d\\
& f \in L^2_{t,x} H^s_v(Q_T) = \bigg\{ f\in L^2(Q_T ) , \frac{f(t,x,v)-f(t,x,w)}{|v-w|^{\frac{d+2s}{2}}} \in L^2(Q_T\times\RR^d) \bigg\}
\end{aligned}
\end{equation}
satisfies
\begin{equation} \label{eq:DBCf}
\gamma_\pm f \in L^1\big(0,T;L^1(\Sigma_\pm)\big),\hspace{0.2cm} \text{and} \hspace{0.2cm} \gamma_- f=0 
\end{equation}
and \eqref{eq:vlfp0} holds in the sense that for any $\phi$ such that 
\begin{equation} \label{eq:weakphiA}
\begin{aligned}
&\phi \in C^\infty ( Q_T ) \hspace{1cm} \phi(T,\cdot,\cdot)=0 \\
&\gamma_+\phi(t,x,v) = 0 \hspace{1cm} \forall (t,x,v)\in [0,T)\times\Sigma_+
\end{aligned}
\end{equation} 
we have:
\begin{equation}\label{eq:wfvlfpA}
\begin{aligned}
& \underset{Q_T}{\iiint} f \Big( \pa_t \phi +  v\cdot \na_x \phi - v\cdot \na_v \phi - \Delsv \phi \Big) \d t \d x \d v \\
&+ \underset{\Omega\times\RR^d}{\iint} f_{in}(x,v)\phi(0,x,v) \d x\d v  = 0.
\end{aligned}
\end{equation}
\end{defi} 
In the case of specular reflection, it is well known that reflective boundaries are often responsible for a loss of regularity of the traces of $f$, see \cite{Mischler10}. Hence, we define the following notion of weak solutions:
\begin{defi} \label{def:weaksolSR}
We say that $f$ is a weak solution of \eqref{eq:vlfp0}-\eqref{eq:vlfpid0}-\eqref{eq:SRop} on $[0,T)$ if
\begin{equation}\label{eq:regsolSR}
\begin{aligned}
&f(t,x,v) \geq 0 \hspace{1cm} \forall (t,x,v)\in [0,T]\times\Omega\times\RR^d\\
& f \in L^2_{t,x} H^s_v(Q_T) = \bigg\{ f\in L^2(Q_T ) , \frac{f(t,x,v)-f(t,x,w)}{|v-w|^{\frac{d+2s}{2}}} \in L^2(Q_T\times\RR^d) \bigg\}
\end{aligned}
\end{equation}
and \eqref{eq:vlfp0} holds in the sense that for any $\phi$ such that:
\begin{equation} \label{eq:weakphiSR}
\begin{aligned}
&\phi \in C^\infty ( Q_T ) \hspace{1cm} \phi(T,\cdot,\cdot)=0 \\
&\gamma_+\phi(t,x,v) = \gamma_-\phi\big(t,x,\mathcal{R}_x(v)\big) \hspace{1cm} \forall (t,x,v)\in [0,T)\times\Sigma_+
\end{aligned}
\end{equation}
we have:
\begin{equation}\label{eq:wfvlfpSR}
\begin{aligned}
& \underset{(0,T)\times\Omega\times\RR^d}{\iiint} f \Big( \pa_t \phi +  v\cdot \na_x \phi - v\cdot \na_v \phi - \Delsv \phi \Big) \d t \d x \d v \\
&\hspace{1cm}+ \underset{\Omega\times\RR^d}{\iint} f_{in}(x,v)\phi(0,x,v) \d x\d v = 0.
\end{aligned}
\end{equation}
\end{defi} 
%The existence and uniqueness of such weak solutions that satisfy appropriate estimates can be established through an adaptation of the methods of Carrillo in \cite{Carrillo98} or Mellet-Vasseur \cite{MelletVasseur} in order to handle the non-local property of the diffusion operator. We do not dwell on this issue since it is not the focus of this paper.
%, instead we will assume the following result of existence
%\begin{thm} \label{thm:existence}
%Let $f_{in}$ be in $L^2_{F^{-1}(v)}(\Omega\times\RR^d)$ and $s$ be in $(0,1)$. There exists a unique weak solution $f^\eps$ of \eqref{eq:vlfp}-\eqref{eq:vlfpid} with either absorption \eqref{eq:Aop} or specular reflection \eqref{eq:SRop} on the boundary and this solution is in $L^2( [0,T)\times\Omega ; H^s (\RR^d) ) \cap L^2_{F^{-1}(v)}( Q_T))$
%\end{thm}

The existence and uniqueness of such weak solutions can be established by adapting the method of Carrillo  in \cite{Carrillo98} or Mellet and Vasseur in \cite{MelletVasseur} in order to handle the non-local property of the diffusion operator. In the whole space, this was done my the author and Aceves-Sanchez in \cite{AcevesCesbron}. We do not dwell on this issue for it is not the focus of this paper.\\
In the first part of this paper, section \ref{section:apriori}, we establish a priori estimates on the weak solutions, in both the absorption and the specular reflection case, using the dissipative property of the fractional Fokker-Planck operator. We then use those estimates to prove convergence of the weak solution of the rescaled fractional VFP equation:
\begin{prop} \label{prop:weakCV}
Let $f_{in}$ be in $L^2_{F^{-1}(v)}(\Omega\times\RR^d)$ and $s$ be in $(0,1)$. The weak solution $f^\eps$ of the rescaled fractional VFP equation \eqref{eq:vlfp}-\eqref{eq:vlfpid} with absorption \eqref{eq:Aop} or specular reflections \eqref{eq:SRop} on the boundary satisfies
\begin{equation} \label{eq:weakCV}
f^\eps (t,x,v) \rightharpoonup \rho(t,x)F(v) \mbox{ weakly in } L^\infty\big(0,T; L^2_{F^{-1}(v)}(\Omega\times\RR^d) \big)
\end{equation}
where $\rho(t,x)$ is the limit of the macroscopic densities $\rho^\eps = \iRd f^\eps \d v$.
\end{prop}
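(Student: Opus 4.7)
The plan is to pass to the limit $\eps\to 0$ using the a priori estimates of Section~\ref{section:apriori}. Those estimates provide a uniform bound $\|f^\eps\|_{L^\infty(0,T; L^2_{F^{-1}(v)}(\Omega\times\RR^d))} \leq C$, from which Banach--Alaoglu yields, along a subsequence, $f^\eps \rightharpoonup^* f$ in this space. It remains to identify $f$ as $\rho(t,x) F(v)$ and to deduce the corresponding convergence of the macroscopic densities; the uniqueness of the limit will then upgrade the subsequential convergence to the full sequence.

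The identification relies on the dissipative structure of $\mathcal{L}^s$. Testing the rescaled equation \eqref{eq:vlfp} with $f^\eps/F$, and invoking the entropy dissipation of Gentil--Imbert \cite{Gentil+2008} for the fractional Fokker--Planck operator, yields an energy estimate of the schematic form
\[
\frac{\eps^{2s-1}}{2}\,\|f^\eps(T)\|^2_{L^2_{F^{-1}}} + B^\eps + \frac{1}{\eps}\int_0^T D_s(f^\eps)\,dt \;\leq\; \frac{\eps^{2s-1}}{2}\,\|f_{in}\|^2_{L^2_{F^{-1}}},
\]
where $D_s \geq 0$ is the nonlocal Dirichlet form associated with $\mathcal{L}^s$ in $L^2_{F^{-1}}$, whose kernel consists exactly of functions of the form $\rho(t,x)F(v)$. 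The boundary contribution
\[
B^\eps \;=\; \frac{1}{2}\iint_\Sigma |\gamma f^\eps|^2\, (n(x)\cdot v)\, F^{-1}(v)\,d\sigma(x)\,dv
\]
is nonnegative in the absorption case (it reduces to an integral over $\Sigma_+$ since $\gamma_- f^\eps = 0$), and vanishes in the specular case after the change of variables $v\mapsto \mathcal{R}_x(v)$, which is an isometry on $\RR^d$ reversing the sign of $n(x)\cdot v$. Consequently $\int_0^T D_s(f^\eps)\,dt \leq C\,\eps^{2s} \to 0$.

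By convexity of $D_s$ and its lower semi-continuity under weak convergence, we get $\int_0^T D_s(f)\,dt = 0$, and hence $f(t,x,v) = \rho(t,x)F(v)$ for some $\rho \in L^\infty(0,T;L^2(\Omega))$. Since $F$ is normalized and integrable, the test function $\psi(t,x)F(v)$ belongs to $L^1(0,T;L^2_{F^{-1}}(\Omega\times\RR^d))$ for every $\psi \in L^1(0,T;L^2(\Omega))$; testing the weak-$*$ convergence $f^\eps \rightharpoonup^* f$ against such functions gives $\rho^\eps = \iRd f^\eps\,dv \rightharpoonup^* \rho$ in $L^\infty(0,T;L^2(\Omega))$. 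Since the limit is uniquely determined, the full sequence converges.

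The main obstacle is the rigorous justification of the boundary term $B^\eps$, particularly in the specular case, where one must define the traces $\gamma_\pm f^\eps$ carefully and exploit the boundary condition \eqref{eq:SRop} pointwise in $v$ in order to perform the change of variables $v\mapsto \mathcal{R}_x(v)$ on $\Sigma_-$. This requires a Green's formula adapted to the fractional VFP equation, more delicate than in the classical local case due to the nonlocal action of $(-\Delta_v)^s$; this trace theory is presumably developed in Section~\ref{section:apriori}. A secondary subtlety is the lower semi-continuity of the nonlocal form $D_s$ on the bounded spatial domain, which nevertheless follows from the convexity and positivity of its quadratic integrand.
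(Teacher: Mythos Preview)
Your approach is correct and largely mirrors the paper's proof: the energy identity obtained by testing \eqref{eq:vlfp} against $f^\eps/F$, the handling of the boundary contribution (nonnegative for absorption since $\gamma_-f^\eps=0$; vanishing for specular reflection via the isometric change $v\mapsto\mathcal{R}_x(v)$, using that $F$ is radial), and the resulting bound $\int_0^T\!\int_\Omega \mathcal{D}^s(f^\eps)\,dx\,dt \le C\eps^{2s}$ are exactly what the paper does in Section~\ref{section:apriori}.

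The one substantive difference lies in how you identify the limit as $\rho F$. You pass the dissipation to the limit by convexity and weak lower semicontinuity of the quadratic form, obtaining $\mathcal{D}^s(f)=0$ and then invoking the characterization of the kernel. The paper instead uses directly the quantitative coercivity of Proposition~\ref{prop:diss},
\[
\mathcal{D}^s(f^\eps)\ \ge\ \theta\,\big\|f^\eps-\rho^\eps F\big\|_{L^2_{F^{-1}}(\RR^d)}^2,
\]
which yields $f^\eps-\rho^\eps F\to 0$ \emph{strongly} in $L^2\big((0,T)\times\Omega; L^2_{F^{-1}}(\RR^d)\big)$ and makes the identification immediate. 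Your route is valid, but since this Poincar\'e-type inequality (from Gentil--Imbert) is precisely the content of Proposition~\ref{prop:diss}, invoking it is both shorter and buys a bit more: strong convergence of the remainder rather than only the form of the weak limit.

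A minor point: your last claim, that ``uniqueness of the limit'' upgrades the convergence to the full sequence, is premature here. At this stage $\rho$ is merely a weak accumulation point of $\rho^\eps$, not yet the solution of a well-posed problem; the paper keeps the convergence subsequential in this proposition and addresses uniqueness only later, through the well-posedness of the limiting equation.
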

In sections 3 and 4, we establish the anomalous diffusion limits, i.e. we identify the limit $\rho$ as solution to a diffusion equation. The main idea of these proofs is to take advantage of the aforementioned scalar-hyperbolic structure of the fractional VFP equation in Fourier space \eqref{eq:Fourier}. To that end, we introduce an auxiliary problem whose purpose is to construct, from any test function $\psi(t,x)$, a function $\phi^\eps(t,x,v)$ which will be constant along the \textit{characteristic lines} of the fractional VFP equation modified to take into account the boundary conditions, and such that $\lim_{\eps \searrow 0}\phi^\eps(t,x,v) = \psi(t,x)$. For the absorption boundary condition, the auxiliary problem reads for $\psi\in\mathcal{D}([0,T)\times\Omega)$:
\begin{subequations}
\begin{align}
&\eps v\cdot \na_x\phi^\eps - v\cdot \na_v \phi^\eps = 0 \hspace{1cm} &\forall (t,x,v) \in \RR^+\times\Omega\times\RR^d, \label{eq:APabsmain}\\
&\phi^\eps(t,x,0) = \psi(t,x) &\forall (t,x)\in\RR^+\times\Omega, \label{eq:APabsit}\\
&\gamma_+ \phi^\eps(t,x,v) = 0 &\forall (t,x,v)\in\RR^+\times\Sigma_+.\label{eq:APabsbc}
\end{align}
\end{subequations}
We construct a solution of this problem and use it as a test function in the weak formulation of \eqref{eq:vlfp}-\eqref{eq:vlfpid}-\eqref{eq:Aop}. We then show that we can take the limit in the weak formulation to prove:
\begin{thm}\label{thm:mainA}
Assume that $f_{in}$ is in $L^2_{F^{-1}(v)} (\Omega\times\RR^d)$ and $s$ is in $(0,1)$. Then the solution $f^\eps$ of \eqref{eq:vlfp}-\eqref{eq:vlfpid}-\eqref{eq:Aop}, converges weakly in the sense of Proposition \ref{prop:weakCV} to $\rho(t,x) F(v)$ where the extension of $\rho(t,x)$ by $0$ outside of $\Omega$ is a weak solution of
\begin{subequations} \label{eq:FracHeatAbs}
\begin{align}
&\pa_t \rho + \Dels \rho = 0 & (t,x)\in [0,T)\times\Omega  \label{eq:FracHeatAbsmain} \\
&\rho(x,0) = \rho_{in}(x) & x\in\Omega  \label{eq:FracHeatAbsit}\\
&\rho(t,x) = 0 & t\in [0,T), x \in \RR^d\setminus\Omega \label{eq:FracHeatAbsBC}
\end{align}
\end{subequations}
where $\rho_{in}(x) = \int f_{in} \d v$, in the sense that for all $\psi \in \mathcal{D}([0,T)\times\RR^d)$ compactly supported in $\Omega$:
\begin{equation} 
\underset{(0,T)\times\RR^d}{\iint} \rho(t,x) \big( \pa_t \psi(t,x) - \Dels \psi(t,x)\big) \d t \d x + \underset{\RR^d}{\int} \rho_{in}(x) \psi(0,x) \d x = 0.
\end{equation}
\end{thm}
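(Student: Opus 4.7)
The plan is to convert the weak formulation \eqref{eq:wfvlfpA} of the rescaled VFP equation into the weak formulation of \eqref{eq:FracHeatAbs} by testing against a solution $\phi^\eps$ of the auxiliary problem \eqref{eq:APabsmain}--\eqref{eq:APabsbc}, and then passing to the limit using Proposition \ref{prop:weakCV}. The key algebraic feature is that the identity $\eps v\cdot\na_x\phi^\eps - v\cdot\na_v\phi^\eps = 0$ forces the singular-in-$\eps$ transport and drift terms to cancel exactly, leaving only the time derivative and the fractional diffusion.

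First, I would solve the auxiliary problem by the method of characteristics. The curves $(x(\tau),v(\tau)) = (x+\eps v(1-e^{-\tau}),\,v e^{-\tau})$ satisfy $\dot x = \eps v$, $\dot v = -v$, and $\phi^\eps$ is constant along them. As $\tau\to+\infty$ they tend to $(x+\eps v, 0)$, so the interior condition $\phi^\eps(t,\cdot,0)=\psi$ read at the asymptotic endpoint, combined with $\gamma_+\phi^\eps=0$ and the convexity of $\Omega$, yields
$$\phi^\eps(t,x,v) = \psi(t,x+\eps v)\,\mathds{1}_\Omega(x+\eps v).$$
Since $\supp\psi\Subset\Omega$, the extension $\widetilde\psi := \psi\,\mathds{1}_\Omega$ lies in $C^\infty_c(\RR^d)$. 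Plugging $\phi^\eps$ into \eqref{eq:wfvlfpA}, the boundary contributions from integration by parts in $x$ vanish (using $\gamma_-f^\eps=0$ and $\gamma_+\phi^\eps=0$), and after dividing by $\eps^{2s-1}$ the cancellation above reduces the weak formulation to
$$\iiint_{Q_T} f^\eps\,\pa_t\phi^\eps\d t\d x\d v - \frac{1}{\eps^{2s}}\iiint_{Q_T} f^\eps\,\Delsv\phi^\eps\d t\d x\d v + \iint_{\Omega\times\RR^d} f_{in}\,\phi^\eps(0,x,v)\d x\d v = 0.$$

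The crucial computation is that the substitution $y=x+\eps w$ in the singular integral representation of $\Delsv\phi^\eps(t,x,v)$ produces the scaling identity
$$\frac{1}{\eps^{2s}}\Delsv\phi^\eps(t,x,v) = \Dels\widetilde\psi(t,x+\eps v),$$
where $\Dels$ is the standard fractional Laplacian on $\RR^d$ applied to the extended-by-zero $\widetilde\psi$. This quantity is smooth, pointwise bounded on $\Omega$, and controlled against $F(v)$ uniformly in $\eps$. Proposition \ref{prop:weakCV} gives $f^\eps\rightharpoonup\rho F$ in $L^\infty\big(0,T;L^2_{F^{-1}(v)}(\Omega\times\RR^d)\big)$, while $\phi^\eps(t,x,v)\to\psi(t,x)$ and $\Dels\widetilde\psi(t,x+\eps v)\to\Dels\widetilde\psi(t,x)$ strongly in the relevant spaces. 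Combining weak and strong convergence, and using $\iRd F\d v=1$, I pass to the limit and obtain
$$\underset{(0,T)\times\RR^d}{\iint}\rho\,\bigl(\pa_t\psi-\Dels\widetilde\psi\bigr)\d t\d x + \underset{\RR^d}{\int}\rho_{in}\,\psi(0,\cdot)\d x = 0,$$
which is the announced weak formulation of \eqref{eq:FracHeatAbs} once $\rho$ is extended by zero outside $\Omega$.

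The main obstacle is that the closed-form $\phi^\eps$ has a jump across the exit set $\{x+\eps v\in\dO\}$, so it is not directly admissible as a smooth test function in Definition \ref{def:weaksolDir}. I would regularize by replacing $\mathds{1}_\Omega(x+\eps v)$ with a smooth cut-off supported slightly inside $\Omega$; this introduces a commutator error in $\eps v\cdot\na_x\phi^\eps-v\cdot\na_v\phi^\eps=0$ concentrated on a shrinking neighbourhood of the exit set, which should vanish in the limit thanks to $\supp\psi\Subset\Omega$ and the a priori estimates from Section \ref{section:apriori}. A secondary but less delicate point is to verify the uniform-in-$\eps$ bound of $\Dels\widetilde\psi(t,\cdot+\eps v)$ in $L^2_F$, which follows from $\widetilde\psi\in C^\infty_c(\RR^d)$ together with the heavy-tailed decay of $F$ from Proposition \ref{prop:eqLFP}.
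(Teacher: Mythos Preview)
Your approach is exactly the one the paper takes: solve the auxiliary problem explicitly by $\phi^\eps(t,x,v)=\bar\psi(t,x+\eps v)$ with $\bar\psi$ the extension of $\psi$ by zero, use the scaling identity $\Delsv\phi^\eps=\eps^{2s}\Dels\bar\psi(t,x+\eps v)$, and pass to the limit via Proposition~\ref{prop:weakCV} together with strong convergence of the test function.

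The one point to flag is that the ``main obstacle'' you raise at the end is not an obstacle at all. You already observed, correctly, that since $\supp\psi\Subset\Omega$ the extension $\widetilde\psi=\psi\,\mathds{1}_\Omega$ lies in $C^\infty_c(\RR^d)$. This means $\widetilde\psi$ vanishes identically in a neighbourhood of $\dO$, so $\phi^\eps(t,x,v)=\widetilde\psi(t,x+\eps v)$ is genuinely smooth on $Q_T$: the indicator $\mathds{1}_\Omega(x+\eps v)$ is redundant because $\psi(t,x+\eps v)$ already vanishes wherever the indicator could create a jump. Moreover, for $(x,v)\in\Sigma_+$ and $\Omega$ convex, $x+\eps v\notin\Omega$ (supporting-hyperplane argument), hence $\gamma_+\phi^\eps=0$. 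So $\phi^\eps$ is an admissible test function in Definition~\ref{def:weaksolDir} as written, and no regularization or commutator estimate is needed. The paper's proof proceeds directly without this detour.
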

In this macroscopic equation, the extension by $0$ of the function $\rho$ can be interpreted as an extension of \eqref{eq:Aop}, the homogeneous Dirichlet boundary condition in the kinetic equation, to the whole complementary of the domain $\Omega$ as a consequence of the non-local nature of the  fractional Laplacian operator. Note that, as expected, this limit problem is directly related to the killed process of Bogdan, Burdzy and Chen \cite{Bogdan03}, and the Dirichlet problem of Felsinger, Kassmann and Voigt \cite{Felsinger13}.\\
For the specular reflection boundary condition, if we want follow the \textit{characteristic lines} as they reflect on the boundary, we need to reduce, when $s\geq 1/2$, the set of test functions to $\mathfrak{D}_T(\Omega)$ defined as:
\begin{equation} \label{eq:defDT}
\mathfrak{D}_T(\Omega) = \Big\{ \psi\in\mathcal{C}^\infty ([0,T)\times\bO) \text{ s.t. } \psi(T,\cdot)=0 \text{ and } \forall x\in\dO : \na_x \psi(t,x)\cdot n(x) =0 \Big\}.
\end{equation} 
The auxiliary problem reads for $\psi \in\mathcal{C}^\infty ([0,T)\times\bO)$ if $s<1/2$ or in $ \mathfrak{D}_T$ if $s\geq 1/2$:
\begin{subequations}
\begin{align}
&\eps v\cdot \na_x\phi^\eps - v\cdot \na_v \phi^\eps = 0 \hspace{1cm} &\forall (t,x,v) \in \RR^+\times\Omega\times\RR^d, \label{eq:APSRmain}\\
&\phi^\eps(t,x,0) = \psi(t,x) &\forall (t,x)\in\RR^+\times\Omega, \label{eq:APSRit}\\
&\gamma_+ \phi^\eps(t,x,v) = \gamma_-\phi^\eps\big(t,x,\mathcal{R}_x(v)\big) &\forall (t,x,v)\in\RR^+\times\Sigma_+.\label{eq:APSRbc}
\end{align}
\end{subequations}
In order to construct a solution of this auxiliary problem we study geodesic trajectories in a Hamiltonian billiard. These trajectories are given by,  parametrised with $s\in[0,\infty)$
\begin{equation}\label{eq:cc}
  \left\{
      \begin{array}{llll}
        &\dot{x}(s) = \eps v(s) \hspace{15mm} &x(0)=x^{in}\in\Omega,  \\
        &\dot{v}(s) = -v(s) &v(0)=v^{in}\in\RR^d,\\
        & \text{If } x(s)\in\dO \text{ then } v(s^+)= \mathcal{R}_{x(s)}(v(s^{-})),
      \end{array}
    \right.
\end{equation}
as illustrated in Figure \ref{fig:extraj} for example when $\Omega$ is a ball. We construct a function $\eta :\Omega\times\RR^d \mapsto \bO$ that will be constant along those trajectories, defined as $\eta(x^{in},v^{in}) = \lim_{s\rightarrow \infty} x(s)$ which obviously, strongly depends on the geometry of the domain and we will show that it is well defined when $\Omega$ is a half-space or a strongly convex domain. This $\eta$ function allows us to build a solution to the auxiliary problem:
\begin{prop}\label{prop:solAPSR}
If $\Omega$ is either a half-space or smooth and strongly convex, then there exists a function $\eta : \Omega\times\RR^d \rightarrow \bO$ such that 
\begin{equation}
\phi^\eps (t,x,v) = \psi\big(t,\eta(x,\eps v) \big)
\end{equation}
is a solution of the auxiliary problem \eqref{eq:APSRmain}-\eqref{eq:APSRit}-\eqref{eq:APSRbc}.
\end{prop}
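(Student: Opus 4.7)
The plan is to construct $\eta$ as the long-time limit of the Hamiltonian billiard flow described by \eqref{eq:cc} and to verify the three conditions of the auxiliary problem by direct chain-rule calculations.

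The first step is an energy dissipation estimate: between two consecutive reflection times $\dot v=-v$ gives $|v(s)|=|v(s_k)|e^{-(s-s_k)}$, and at each reflection $\mathcal{R}_{x(s_k)}$ is an isometry of $\RR^d$, so $|v|$ is preserved across the jump. Hence $|v(s)|=|v^{in}|e^{-s}$ for all $s\geq 0$ on which the flow is defined, and the total arc length $\int_0^\infty \eps|v(s)|\d s=\eps|v^{in}|$ is finite. Consequently $s\mapsto x(s)$ is uniformly Cauchy and the limit $\eta(x^{in},v^{in}):=\lim_{s\to\infty}x(s)\in\bO$ is well-defined as soon as the characteristics can be extended to $[0,\infty)$.

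Next I would check that $\phi^\eps(t,x,v):=\psi(t,\eta(x,\eps v))$ solves \eqref{eq:APSRmain}--\eqref{eq:APSRbc}. The change of variables $w=\eps v$ turns the ODE into the $\eps$-independent system $\dot x=w,\ \dot w=-w$; by its definition $\eta$ is constant along this flow, so $F(x,w):=\psi(\eta(x,w))$ satisfies $w\cdot\nabla_x F-w\cdot\nabla_w F=0$, and substituting $w=\eps v$ yields \eqref{eq:APSRmain}. At $v=0$ the flow is stationary, so $\eta(x,0)=x$, giving \eqref{eq:APSRit}. For $(x,v)\in\Sigma_+$, the forward trajectory issued from $(x,\eps v)$ undergoes an immediate reflection at $s=0$ to $(x,\mathcal{R}_x(\eps v))=(x,\eps\mathcal{R}_x(v))$, using the linearity of $\mathcal{R}_x$; thereafter the two flows coincide, so $\eta(x,\eps v)=\eta(x,\eps\mathcal{R}_x(v))$, which is exactly \eqref{eq:APSRbc}.

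It remains to establish global existence of the billiard trajectory, and this is where the geometric hypothesis on $\Omega$ enters. For a half-space $\Omega=\{x_d>0\}$ I would unfold the flow across $\{x_d=0\}$: the trajectory is the image under successive reflections of the straight line $x^{in}+(1-e^{-s})\eps v^{in}$, and an explicit closed form for $\eta$ can be written down. The main obstacle, which I expect to absorb most of the technical work, is the strongly convex case: the familiar billiard pathologies --- tangential grazing collisions and finite-$s$ accumulations of reflection times --- must be excluded. I would exploit strong convexity of $\partial\Omega$ to obtain a positive lower bound on the time between consecutive reflections in terms of the curvature of $\partial\Omega$ and of the magnitude of $v$ away from the grazing set; combined with the exponential decay of $|v|$, this yields a well-defined $\eta$ in both possible regimes: finitely many reflections terminating at an interior point of $\bO$, or infinitely many reflection points forming a Cauchy sequence accumulating at a single point of $\partial\Omega$. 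An equally delicate point, needed later when $\phi^\eps$ is used as a test function in the weak formulation, will be to establish enough continuity of $\eta$ outside the (measure-zero) grazing set.
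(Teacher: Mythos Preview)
Your outline is correct and follows essentially the same strategy as the paper: build $\eta$ as the endpoint of the specularly reflected flow \eqref{eq:cc}, then verify \eqref{eq:APSRmain}--\eqref{eq:APSRbc} by chain rule, the identity $\eta(x,0)=x$, and the reflection invariance of $\eta$ at $\Sigma_+$. The paper's verification step is identical to yours, though it is terser on \eqref{eq:APSRbc} (simply ``by construction'').

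The one genuine difference is how the trajectory is shown to be globally defined. Instead of arguing directly with the exponentially decaying speed and proposing to derive inter-reflection lower bounds from strong convexity, the paper makes the substitution $\tau=1-e^{-s}\in[0,1)$, which turns the flow into a constant-speed billiard of total length $\eps|v^{in}|$ on a finite time interval. This reduces the question to the classical billiard setting, and the no-accumulation property is then simply \emph{cited} from Halpern (dimension $2$) and Safarov--Vassilev (higher dimensions) rather than reproved. In particular your ``second regime'' of infinitely many reflections accumulating at a boundary point does not occur for half-spaces or strongly convex domains: after the reparametrisation the trajectory has finite length, and those references guarantee finitely many reflections in finite time. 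So the endpoint $\eta(x^{in},v^{in})=x(\tau{=}1)$ is reached after finitely many bounces, and there is no need to handle a Cauchy-sequence-of-reflections case.

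The upshot is that what you anticipate as ``most of the technical work'' is in the paper a one-line citation; conversely, the reparametrisation is what makes that citation legitimate, and it is worth making explicit.
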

Although the regularity of this $\eta$ function is rather simple to study in the half-space, it is much harder to understand in the ball and we will devote Appendix \ref{app:FreeTransport} to this investigation. In fact, it is strongly linked with the free transport equation. Indeed, if we consider the following free transport equation in a ball with specular reflection on the boundary and a homogeneous-in-velocity initial condition: 
\begin{align*}
&\pa_t f  + v\cdot \na_x f = 0 & (t,x,v) \in [0,T)\times\Omega\times\RR^d \\
&f(0,x,v) = \psi(x) & (x,v)\in\Omega\times\RR^d \\
&\gamma_- f(t,x,v) = \gamma_+ f(t,x,\mathcal{R}_x v )  &(t,x,v) \in [0,T)\times \dO \times \lbrace v : v\cdot n(x) <0\rbrace
\end{align*} 
then, using \eqref{eq:APSRmain}-\eqref{eq:APSRit}-\eqref{eq:APSRbc} and Proposition \ref{prop:solAPSR} we can show that a solution of this problem is 
\begin{equation*}
f(t,x,v) = \psi\big( \eta(x, -t v) \big) .
\end{equation*}
As a consequence, the regularity properties of $\eta$ we establish in Appendix \ref{app:FreeTransport} can also be interpreted as a propagation of regularity with respect to the velocity for the free transport equation in a ball with specular reflection on the boundary. Note that the optimal regularity for this problem is an open problem and, to the best of our knowledge, the regularity in velocity that we proved here is the highest known in Sobolev spaces. \\
We are then able to establish the following anomalous diffusion limit.
\begin{thm} \label{thm:mainSR}
Let $\Omega$ be either a half-space or a ball in $\RR^d$ and assume that $f_{in}$ is in $L^2_{F^{-1}(v)} (\Omega\times\RR^d)$ and $s$ is in $(0,1)$. Then the solution $f^\eps$ of \eqref{eq:vlfp}-\eqref{eq:vlfpid}-\eqref{eq:Aop}, converges weakly in the sense of Proposition \ref{prop:weakCV} to $\rho(t,x) F(v)$ where $\rho(t,x)$ satisfies, for any $\psi\in \mathcal{C}^\infty ([0,T)\times\bO)$ if $s<1/2$ and any $\psi \in  \mathfrak{D}_T(\Omega)$ if $s\geq 1/2$:
\begin{equation} \label{eq:FracHeatSRweak}
\underset{(0,T)\times\Omega}{\iint} \rho(t,x) \Big( \pa_t \psi(t,x) - \DelsSR \psi(t,x) \Big) \d t \d x + \underset{\Omega}{\int} \rho_{in}(x)\psi(0,x) \d x = 0.
\end{equation}
where $\rho_{in}(x) = \int f_{in} \d v$ and $\DelsSR$ is defined as:
\begin{equation} \label{def:DelsSR}
\DelsSR \psi(x) = c_{d,s} P.V. \underset{\RR^d}{\int} \frac{\psi(x) - \psi\big(\eta(x,w)\big)}{|w|^{d+2s}} \d w
\end{equation}
\end{thm}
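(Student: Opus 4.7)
The strategy is to use the solution $\phi^\eps(t,x,v)=\psi(t,\eta(x,\eps v))$ of the auxiliary problem \eqref{eq:APSRmain}--\eqref{eq:APSRbc}, provided by Proposition~\ref{prop:solAPSR}, as a test function in the weak formulation of the rescaled specular VFP problem \eqref{eq:vlfp}--\eqref{eq:vlfpid}--\eqref{eq:SRop}. First I would check that $\phi^\eps$ is admissible in the sense of \eqref{eq:weakphiSR}: the specular compatibility $\gamma_+\phi^\eps=\gamma_-\phi^\eps\circ\mathcal{R}_x$ is exactly \eqref{eq:APSRbc}, while the smoothness needed to use $\phi^\eps$ as a test function reduces to regularity of the composition $\psi\circ\eta$ on $\bO\times\RR^d$. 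This is straightforward in the half-space and, in the ball, follows from the regularity of $\eta$ established in Appendix~\ref{app:FreeTransport}; the dichotomy $s<1/2$ versus $s\geq 1/2$ together with the restriction $\psi\in\mathfrak{D}_T(\Omega)$ in the latter case are precisely what guarantee enough regularity of $w\mapsto\psi(t,\eta(x,w))$ near $w=0$ for the singular integral defining the fractional Laplacian to converge.

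Next I would insert $\phi^\eps$ into the weak formulation. After the usual integration by parts, the boundary terms in $x$ cancel thanks to the matching specular conditions on $f^\eps$ and $\phi^\eps$ (via the change of variable $v\mapsto\mathcal{R}_x v$ on $\Sigma_-$), yielding
\begin{equation*}
\iiint_{Q_T} f^\eps\Big[\eps^{2s-1}\pa_t\phi^\eps + v\cdot\na_x\phi^\eps - \tfrac{1}{\eps}v\cdot\na_v\phi^\eps - \tfrac{1}{\eps}\Delsv\phi^\eps\Big]\d t\,\d x\,\d v + \eps^{2s-1}\iint f_{in}\phi^\eps(0,\cdot,\cdot)\,\d x\,\d v = 0.
\end{equation*}
The transport and drag terms in the bracket cancel by \eqref{eq:APSRmain}, which gives $v\cdot\na_x\phi^\eps = \tfrac{1}{\eps}v\cdot\na_v\phi^\eps$. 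Dividing by $\eps^{2s-1}$ and using the scaling identity $(-\Delta_v)^s[g(\eps\cdot)](v)=\eps^{2s}(-\Delta_w)^sg(\eps v)$ applied to $g_{t,x}(w):=\psi(t,\eta(x,w))$, the formulation reduces to
\begin{equation*}
\iiint_{Q_T} f^\eps\Big[\pa_t\psi(t,\eta(x,\eps v)) - \big((-\Delta_w)^s g_{t,x}\big)(\eps v)\Big]\d t\,\d x\,\d v + \iint f_{in}(x,v)\,\psi(0,\eta(x,\eps v))\,\d x\,\d v = 0.
\end{equation*}

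Finally I would pass to the limit $\eps\to 0$. Since $\eta(x,0)=x$, one has $\pa_t\psi(t,\eta(x,\eps v))\to\pa_t\psi(t,x)$ and $\psi(0,\eta(x,\eps v))\to\psi(0,x)$ locally uniformly in $v$, while by the singular integral representation \eqref{def:Dels},
\begin{equation*}
\big((-\Delta_w)^s g_{t,x}\big)(0) = c_{d,s}\,\mathrm{P.V.}\!\int_{\RR^d}\frac{\psi(t,x)-\psi(t,\eta(x,w))}{|w|^{d+2s}}\,\d w = \DelsSR\psi(t,x),
\end{equation*}
which matches \eqref{def:DelsSR} exactly. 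Combining these pointwise limits with the weak convergence $f^\eps\rightharpoonup\rho F$ from Proposition~\ref{prop:weakCV} and the normalization $\int_{\RR^d}F\,\d v=1$ from Proposition~\ref{prop:eqLFP} produces \eqref{eq:FracHeatSRweak}. The main obstacle I expect lies in this last step: promoting the pointwise convergence of $((-\Delta_w)^s g_{t,x})(\eps v)$ to $\DelsSR\psi(t,x)$ into one compatible with the merely weak bound on $f^\eps$ in $L^\infty(0,T;L^2_{F^{-1}(v)})$. This requires uniform-in-$\eps$ control of the singular integral by splitting between a compact region in $v$, where a dominated convergence argument in $L^2_F$ suffices, and the heavy tails, which are absorbed by the $|v|^{-d-2s}$ decay of $F$ together with the boundedness of $\psi$ on $\bO$; it is here that the regularity of $\eta$ from Appendix~\ref{app:FreeTransport} and the compatibility condition built into $\mathfrak{D}_T(\Omega)$ for $s\geq 1/2$ become indispensable.
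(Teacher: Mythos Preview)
Your proposal is correct and follows essentially the same route as the paper: insert $\phi^\eps=\psi(t,\eta(x,\eps v))$ from Proposition~\ref{prop:solAPSR} into the weak formulation, cancel the transport/drag terms via \eqref{eq:APSRmain}, use the $\eps^{2s}$-scaling of $\Delsv$ to arrive at \eqref{eq:wfSReta}, and then pass to the limit. The technical obstacle you flag---upgrading the pointwise convergence of $\big((-\Delta_w)^s g_{t,x}\big)(\eps v)$ to strong convergence in $L^\infty(0,T;L^2_{F(v)}(\Omega\times\RR^d))$---is exactly the content of Lemma~\ref{lemma:CVSR}, which the paper proves by dominated convergence with a $v$-uniform $L^2(\Omega)$ bound in the half-space, and in the ball by first establishing an $L^p_{F(v)}$ bound with $p>2$ (Lemma~\ref{lemma:delsint}) and then invoking an Egorov-type argument; this is precisely the ``splitting plus tails absorbed by $F$'' mechanism you anticipate.
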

This new operator, which we call \textit{specular diffusion operator}, can be seen as a modified version of the fractional Laplacian where the particles can jump from a position $x$ to a position $y$ in $\Omega$ not only through a straight line (which corresponds to the fractional Laplacian) but also through trajectories that are specularly reflected upon hitting the boundary, and the probability of this jump is $1/|w|^{d+2s}$ where $|w|$ is the length of the trajectory. Note that when $\Omega$ is $\RR^d$, by definition we have $\eta(x,w)= x+w$ so that $\DelsSR$ coincides with the full fractional Laplacian $\Dels$ on $\RR^d$. \\
Theorem \ref{thm:mainSR} can also be proved when $\Omega$ is a stripe $\lbrace x=(x',x_d) \in\RR^d : -1 < x_d < 1\rbrace$ or a cube using arguments from the half-space case in order to handle locally the interaction with the boundary, and from the ball case to handle the multitude of reflections a trajectory in a stripe or a cube may undergo in a finite time. Moreover, in order to extend this theorem to general smooth and strongly convex domains, one only needs to prove that the trajectories described by $\eta$ in that domain satisfy appropriate controls, similar to the ones we state in Lemma \ref{lemma:delsint} in the case of the ball which we prove in Appendix \ref{app:FreeTransport}. The rest of the proof would remain the same.\\
Finally, in the last section of this paper, we focus on the macroscopic equation \eqref{eq:weakFHSR} which we name \textit{specular diffusion equation}. First, we establish properties of the specular diffusion operator $\DelsSR$. Namely, in the half-space we show that it can be written as a kernel operator with a symmetric kernel:
\begin{equation} \label{def:kernelDelSR}
\DelsSR \psi(x) = P.V. \underset{\Omega}{\int} \big( \psi(x) - \psi(y)\big) K_\Omega(x,y) \d y \hspace{0.5cm} \mbox{with} \hspace{0.3cm} K_\Omega(x,y)=K_\Omega(y,x).
\end{equation} 
and such that the kernel is $2s$-singular. Then, in both the half-space and the ball, we show that the operator is symmetric and admits a integration by parts formula. From this formula we derive a scalar product and defined the associated Hilbert space $\HSRs(\Omega)$ in the spirit of the fractional Sobolev spaces in their relation with the fractional Laplacian operators as is presented for instance in \cite{DiNezza+}. We conclude this paper by studying the specular diffusion equation in this setting:
\begin{thm} \label{thm:wellposedness}
Let $\Omega$ be a half-space or a ball in $\RR^d$, $u_{in}$ be in $L^2((0,T)\times\Omega)$ and $s$ be in $(0,1)$. For any $T>0$, there exists a unique weak solution $u \in L^2(0,T;\HSRs(\Omega))$ of
\begin{subequations}
\begin{align}
&\pa_t u + \DelsSR u = 0 & (t,x)\in [0,T)\times\Omega  \label{eq:FracHeatSR}\\
&u(0,x) = u_{in} (x) & x\in\Omega \label{eq:FracHeatSRit}
\end{align}
\end{subequations}
in the sense that for any $\psi\in \mathcal{C}^\infty([0,T)\times\bO)$ if $s<1/2$ and any $\psi \in \mathfrak{D}_T$ is $s\geq 1/2$, $u$ satisfies if $\Omega$ is a half-space:
\begin{equation} \label{eq:weaksolFracHeathalfspace}
\begin{aligned}
&\underset{(0,T)\times\Omega}{\iint} u \pa_t \psi \d t\d x + \underset{\Omega}{\int} u_{in}(x)\psi(0,x) \d x   \\
& -\frac{1}{2} \underset{(0,T)\times\Omega\times\Omega}{\iiint} \big( u(t,x)-u(t,y) \big)\big( \psi(t,x) - \psi(t,y) \big) K(x,y) \d t\d x \d y=0 .
\end{aligned}
\end{equation}
and if $\Omega$ is the unit ball
\begin{equation} \label{eq:weaksolFracHeatball}
\begin{aligned}
&\underset{(0,T)\times\Omega}{\iint} u \pa_t \psi \d t \d x +  \underset{\Omega}{\int} u_{in}(x)\psi(0,x) \d x \\
&-\frac{1}{2} \underset{(0,T)\times\Omega\times\RR^d}{\iiint} \big( u(t,x)-u\big(t,\eta(x,v)\big) \big)\big( \psi(t,x) - \psi\big(t,\eta(x,v)\big) \big) \frac{ \d t \d x \d v }{|v|^{d+2s}} =0 .
\end{aligned}
\end{equation}
Moreover, if $\Omega$ is a half-space or a ball, then the macroscopic density $\rho$ who satisfies \eqref{eq:FracHeatSRweak} for all $\psi\in \mathcal{C}^\infty([0,T)\times\bO)$ is $s<1/2$ and any $\psi \in \mathfrak{D}_T(\Omega)$ if $s\geq 1/2$, is the unique weak solution of \eqref{eq:FracHeatSR}-\eqref{eq:FracHeatSRit}.
\end{thm}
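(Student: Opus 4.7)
The plan is to treat \eqref{eq:FracHeatSR}--\eqref{eq:FracHeatSRit} as a standard abstract parabolic equation in the Gelfand triple $\HSRs(\Omega) \hookrightarrow L^2(\Omega) \hookrightarrow \HSRs(\Omega)^\prime$, using the symmetric bilinear form coming from the integration by parts formula established for $\DelsSR$ in the previous section. In the half-space case, define
\[
a(u,\psi) \;=\; \tfrac12 \iint_{\Omega\times\Omega} \bigl(u(x)-u(y)\bigr)\bigl(\psi(x)-\psi(y)\bigr) K_\Omega(x,y) \d x \d y,
\]
and in the ball case replace this by the analogous integral over $\Omega\times\RR^d$ involving $\eta(x,v)$ and $|v|^{-d-2s}$. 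By the properties of $\DelsSR$ already proved, $a$ is continuous and symmetric on $\HSRs(\Omega)\times\HSRs(\Omega)$, and by definition of the space it satisfies $a(u,u)+\|u\|_{L^2}^2 \geq c\|u\|_{\HSRs}^2$, i.e. it is $L^2$--coercive on $\HSRs$. This is exactly the setting in which J.--L. Lions' classical theorem applies and produces, for any $u_{in}\in L^2(\Omega)$, a unique $u\in L^2(0,T;\HSRs(\Omega))$ with $\partial_t u \in L^2(0,T;\HSRs(\Omega)^\prime)$ satisfying
\[
\langle \partial_t u,\psi\rangle + a(u,\psi) = 0 \qquad \forall \psi\in\HSRs(\Omega),\qquad u(0)=u_{in}.
\]

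To recover the weak formulations \eqref{eq:weaksolFracHeathalfspace}--\eqref{eq:weaksolFracHeatball}, I would take a time-dependent test function $\psi$ as in the statement, integrate in $t$, and integrate by parts the time-derivative term using $\psi(T,\cdot)=0$. The only non-automatic point is density: one needs $\mathcal{C}^\infty([0,T)\times\bO)$ in the case $s<1/2$, and $\mathfrak{D}_T(\Omega)$ in the case $s\geq 1/2$, to be dense in the natural space of time-dependent test functions associated with the variational problem. For $s<1/2$ this is a straightforward mollification since $\HSRs$ behaves like $H^s$ below the critical exponent; for $s\geq 1/2$ the extra Neumann-type constraint $\nabla_x\psi\cdot n=0$ on $\partial\Omega$ encoded in $\mathfrak{D}_T$ is exactly what is needed for the integration by parts formula for $\DelsSR$ to be meaningful on $\psi$, and density should follow by constructing an approximation that preserves this boundary condition via a tangential mollification near $\partial\Omega$. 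This density argument is the delicate technical step.

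Uniqueness is a one-line energy estimate once the framework is in place: for two solutions $u_1,u_2$ the difference $w=u_1-u_2$ solves the equation with $w(0)=0$; using $w$ itself as test function (after a standard Steklov or time mollification to justify the chain rule) gives $\tfrac{d}{dt}\tfrac12\|w\|_{L^2}^2 + a(w,w)=0$, and since $a(w,w)\geq 0$ we conclude $w\equiv 0$.

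For the final statement, that the macroscopic density $\rho$ produced by Theorem \ref{thm:mainSR} coincides with this unique weak solution, the plan is to show that $\rho$ itself lies in $L^2(0,T;\HSRs(\Omega))$ and satisfies \eqref{eq:weaksolFracHeathalfspace} or \eqref{eq:weaksolFracHeatball}. The energy bound $\rho\in L^\infty(0,T;L^2(\Omega))$ already follows from Proposition \ref{prop:weakCV}, and the $\HSRs$-regularity should be extracted from the dissipation estimate on $f^\eps$ coming from the fractional Fokker--Planck operator together with the passage to the limit $\eps\to 0$ through the auxiliary problem and Proposition \ref{prop:solAPSR}: the bilinear form $a(\rho,\rho)$ is controlled by the limit of the Dirichlet-form-like quantity $\frac{1}{\eps}\iint (f^\eps(v)-f^\eps(w))^2|v-w|^{-d-2s}/F \d v \d w$, which is uniformly bounded in $\eps$ by the a priori estimates of Section \ref{section:apriori}. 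Once $\rho$ is in the energy space, \eqref{eq:FracHeatSRweak} is exactly the weak formulation of \eqref{eq:FracHeatSR}--\eqref{eq:FracHeatSRit}, so uniqueness gives $\rho=u$. The main obstacle in this part is verifying that the full $\HSRs$ seminorm of $\rho$ (and not only a localized one) is controlled in the limit, which will rely crucially on the kernel controls promised in Lemma \ref{lemma:delsint} and on the geometric properties of $\eta$.
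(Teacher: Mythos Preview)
Your approach to Part I (existence and uniqueness of $u$) is essentially equivalent to the paper's, which also proceeds via Lax--Milgram after the exponential change of unknown $\bar u = e^{-\lambda t}u$; the Lions--Gelfand-triple formulation you propose is a standard repackaging of the same argument, and the uniqueness via energy estimate is identical.

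The genuine gap is in the final paragraph, where you propose to identify $\rho$ with $u$ by showing directly that $\rho\in L^2(0,T;\HSRs(\Omega))$. Your proposed mechanism --- that $a(\rho,\rho)$ is controlled by the limit of the velocity dissipation $\tfrac{1}{\eps}\iint (f^\eps(v)-f^\eps(w))^2|v-w|^{-d-2s}F^{-1}$ --- does not work. First, that quantity is not uniformly bounded in $\eps$ for all $s\in(0,1)$: the a priori estimate of Section~\ref{section:apriori} gives $\int_0^T\int_\Omega \mathcal{D}^s(f^\eps)\leq \eps^{2s}\lVert f_{in}\rVert^2$, so with the extra $1/\eps$ it scales like $\eps^{2s-1}$, which blows up for $s<1/2$ and in any case tends to $0$ for $s>1/2$ rather than producing a nontrivial bound. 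Second, and more fundamentally, the dissipation is a \emph{velocity}-regularity of $f^\eps$; there is no direct route from it to the \emph{spatial} $\HSRs$-seminorm of the density $\rho$ (averaging lemmas could in principle be invoked, but you do not mention them and they would not yield the precise $\HSRs$ control in a bounded domain with specular boundary).

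The paper bypasses this issue entirely: it never shows $\rho\in\HSRs$. Instead it proves uniqueness of \emph{distributional} solutions of \eqref{eq:FracHeatSRweak} by a duality argument. One solves the reverse-in-time problem $\partial_t\psi_{\bar\rho}-\DelsSR\psi_{\bar\rho}=\bar\rho$ with $\psi_{\bar\rho}(T)=0$ and plugs $\psi_{\bar\rho}$ into \eqref{eq:distsoluniq}. The whole difficulty is showing that $\psi_{\bar\rho}$ is admissible when $s\geq 1/2$, i.e.\ lies in the $\mathfrak{D}_T$-closure; this is Lemma~\ref{lem:Neumann}, which says that any $\psi$ with $\DelsSR\psi\in L^\infty(\Omega)$ automatically satisfies the Neumann condition $\nabla\psi\cdot n=0$ on $\partial\Omega$. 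That lemma is the real content of Part II and has no analogue in your plan.
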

This theorem highlights the fact that the interaction with the boundary in \eqref{eq:FracHeatSR}-\eqref{eq:FracHeatSRit} is contained in the definition of the diffusion operator $\DelsSR$ since we don't need to add a boundary condition in order to have well-posedness. \\
Here again, although we only look at the half-space and the ball, other geometries can be handled by our method such as a stripe or a cube for example. Furthermore, the only obstacle to considering more general domains lies in understanding the function $\eta$ is those domains in order to establish the symmetry of the specular diffusion operator and estimates on its singularity.

\section{A priori estimates} \label{section:apriori}

In order to study the asymptotic behaviour of the weak solution of \eqref{eq:vlfp}-\eqref{eq:vlfpid} with \eqref{eq:Aop} or \eqref{eq:SRop} boundary condition, we need a priori estimates. Those estimates will rely on the following dissipation property of the fraction Fokker-Planck operator $\mathcal{L}^s$
\begin{prop} \label{prop:diss}
For all $f$ smooth enough, if we define the dissipation as:
\begin{equation}\label{def:diss}
\mathcal{D}^s (f) := - \underset{\RR^d}{\int} \mathcal{L}^s(f) \frac{f}{F}\d v 
\end{equation}
then there exists $\theta >0$ such that
\begin{equation}\label{eq:disscontrol}
\mathcal{D}^s(f) = \underset{\RR^d\times\RR^d}{\iint} \frac{\big( f(v) - f(w) \big)^2}{|v-w|^{d+2s}} \frac{\d v \d w}{F(v)} \geq \theta \underset{\RR^d}{\int} \big| f(v) - \rho F(v) \big|^2 \frac{\d v}{F(v)}
\end{equation}
where $\rho=\iRd f(v) \d v$. Note, in particular, that $\mathcal{D}^s(f) \geq 0$.
\end{prop}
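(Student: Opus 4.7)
The plan is to exploit the equilibrium identity $\mathcal{L}^s F = 0$ to recast $\mathcal{D}^s(f)$ as a symmetric weighted Dirichlet form (which is manifestly nonnegative), and then invoke a Poincaré-type inequality in the weighted $L^2$ space associated with $F$ to close the estimate.

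First I would substitute $f = Fg$ with $g = f/F$ and split $\mathcal{D}^s(f)$ into a drift contribution and a nonlocal contribution. Integration by parts in velocity (no boundary terms since $F$ decays and $f$ is smooth enough) gives
\begin{equation*}
-\iRd \pa_v\cdot (vf)\, \frac{f}{F}\d v = \frac{1}{2}\iRd v F\cdot \na_v (g^2)\d v = -\frac{1}{2}\iRd g^2\, \pa_v\cdot (vF)\d v = -\frac{1}{2}\iRd g^2\, \Delsv F\d v,
\end{equation*}
where the last equality uses $\pa_v\cdot (vF) = \Delsv F$ coming from $\mathcal{L}^s F=0$. For the nonlocal part, using the bilinear representation of $\Delsv$, I would write
\begin{equation*}
\iRd \Delsv f\cdot \frac{f}{F}\d v = \frac{c_{d,s}}{2}\iintRd \frac{\big(F(v)g(v)-F(w)g(w)\big)\big(g(v)-g(w)\big)}{|v-w|^{d+2s}}\d v\d w
\end{equation*}
and algebraically decompose the numerator as $\tfrac{1}{2}(F(v)+F(w))(g(v)-g(w))^2 + \tfrac{1}{2}(F(v)-F(w))(g(v)^2-g(w)^2)$. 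A $v\leftrightarrow w$ symmetrization converts the second piece into $\iRd g^2\, \Delsv F\d v$, which cancels exactly the drift term. The dissipation therefore reduces to the symmetric nonnegative expression
\begin{equation*}
\mathcal{D}^s(f) = \frac{c_{d,s}}{4}\iintRd \frac{\big(F(v)+F(w)\big)\big(g(v)-g(w)\big)^2}{|v-w|^{d+2s}}\d v\d w,
\end{equation*}
which is equivalent, up to constants and symmetrization of the weight, to the integral displayed in the statement of the proposition, and in particular proves $\mathcal{D}^s(f)\geq 0$.

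The remaining inequality $\mathcal{D}^s(f) \geq \theta \iRd |f-\rho F|^2 F^{-1}\d v$ is a spectral gap / weighted Poincaré inequality for the Dirichlet form associated to the measure $F\d v$, applied to $g = f/F$ whose projection on the kernel of $\mathcal{L}^s$ is precisely $\rho$. I would quote the modified logarithmic Sobolev inequality proved by Gentil and Imbert in \cite{Gentil+2008} for Lévy--Fokker--Planck equilibria (refining the entropy-production analysis of \cite{Biler+2003}); its linearization around the heavy-tailed equilibrium $F$ of Proposition \ref{prop:eqLFP} yields the required Poincaré bound with an explicit constant $\theta > 0$ depending only on $s$ and $d$.

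The main obstacle is this last step: because $F$ is heavy-tailed (only polynomial decay $F(v)\sim C|v|^{-d-2s}$), the classical Bakry--Émery criterion does not apply and a local Poincaré inequality is not enough; one genuinely needs the nonlocal Dirichlet form together with the fine decay of $F$ to obtain a Poincaré-type inequality in the weighted space $L^2(F^{-1}\d v)$, which is why one relies on the nonlocal functional inequalities of \cite{Gentil+2008}.
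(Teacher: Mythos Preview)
Your proposal is correct and follows essentially the same route as the paper: the substitution $g=f/F$, the integration by parts on the drift term combined with the equilibrium identity $\nabla_v\cdot(vF)=(-\Delta_v)^s F$ to cancel against the cross term from the nonlocal part, and the reduction to the symmetric weighted Dirichlet form $\tfrac{c_{d,s}}{4}\iint (F(v)+F(w))(g(v)-g(w))^2|v-w|^{-d-2s}\,\d v\,\d w$ all mirror the paper's computation exactly, as does the appeal to the Gentil--Imbert modified log-Sobolev inequality \cite{Gentil+2008} for the spectral gap. The only cosmetic difference is that the paper first rewrites the nonlocal contribution as $\int F[-\tfrac12(-\Delta)^s(g^2)+g(-\Delta)^s g]\,\d v$ before inserting the singular-integral definition, whereas you use the bilinear form of $(-\Delta_v)^s$ from the outset and perform the algebraic decomposition of the numerator; the two computations are equivalent.
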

\begin{proof}
We introduce the notation $g=f/F(v)$ and notice by expending the divergence and integrating by parts that:\\
\begin{equation*}
\iRn \na_v \cdot(vFg) g \d v = \frac{1}{2} \iRn \na_v \cdot(vF)g^2 \d v.
\end{equation*}
We recall that $F$ satisfies $\mathcal{L}^s(F) = 0$, which means $\na_v \cdot (vF) = (-\Delta)^s (F)$. Together with the symmetry of the fractional Laplacian this yields:
\begin{align*}
\mathcal{D}^s (f)&= -\int_{\RR^d}  \Big( \na_v \cdot (vgF)g -(-\Delta)^s (gF)g \Big)  \d v \\
& = -\int_{\RR^d} \Big( \na_v \cdot ( vF) g^2/2 -(-\Delta)^s (gF)g\,\Big) \d v\\
& = -\int_{\RR^d} \Big( (-\Delta)^s(F) g^2/2 -(-\Delta)^s (gF)g \Big)\d v\\
& = \int_{\RR^d} \Big( -\frac{1}{2}F (-\Delta)^s(g^2) + F g(-\Delta)^s (g)\Big) \d v.
\end{align*}
Inputting the definition \eqref{def:Dels} of the fractional Laplacian we get:
\begin{align*}
\mathcal{D}^s (f)  &= c_s\underset{\RR^d}{\int} P.V. \underset{\RR^d}{\int} \left\{- \frac{1}{2}[ g(v)^2-g(w)^2] + g(v)^2-g(w)g(v)\right\}  \frac{F(v)}{|v-w|^{d+2s}} \d w\d v\\
& = \frac{c_s}{2} \underset{\RR^d\times\RR^d}{\iint}    F(v)  \frac{[ g(v)-g(w)]^2}{|v-w|^{d+2s}} \d v \d w.\\
& = \frac{c_s}{2}\underset{\RR^d\times\RR^d}{\iint}  \left( \frac{f(v)}{F(v)}- \frac{f(w)}{F(w)}\right)^2 \frac{F(v)}{|v-w|^{d+2s}} \d v \d w.
\end{align*}
Since $v$ and $w$ play the same role in the integral, we can write 
\begin{align*}
\mathcal{D}^s (f)  = \frac{c_s}{4}\underset{\RR^d\times\RR^d}{\iint} \Bigg[  \left( \frac{f(v)}{F(v)}- \frac{f(w)}{F(w)}\right)^2 F(v) + \left( \frac{f(v)}{F(v)}- \frac{f(w)}{F(w)}\right)^2 F(w)\Bigg] \frac{ \d v \d w }{|v-w|^{d+2s}}.
\end{align*}
Expending the integrand and grouping the terms adequately, it is not difficult to show that:
\begin{equation}
\mathcal{D}^s (f)  = \underset{\RR^d\times\RR^d}{\iint} \frac{\big( f(v) - f(w) \big)^2}{|v-w|^{d+2s}} \frac{\d v \d w}{F(v)}.
\end{equation}
Finally, the second inequality in \eqref{eq:disscontrol} comes from the modified logarithmic Sobolev inequality of Gentil-Imbert (Theorem 3 in \cite{Gentil+2008}) which we can use here because $F(v)$ is the infinitely divisible law associated with the L\'{e}vy measure $1/|v|^{d+2s}$. We refer the interested reader to \cite{AcevesCesbron} for a proof of this functional inequality in the fractional Laplacian case.
\end{proof}

The dissipation property of $\mathcal{L}^s$ allows us to prove the following:
\begin{repprop}{prop:weakCV} 
Let $f_{in}$ be in $L^2_{F^{-1}(v)} (\Omega\times\RR^d)$ and $s$ be in $(0,1)$. The weak solution $f^\eps$ of the rescaled fractional VFP equation \eqref{eq:vlfp}-\eqref{eq:vlfpid} with absorption \eqref{eq:Aop} or specular reflections \eqref{eq:SRop} on the boundary satisfies
\begin{equation} \tag{\ref{eq:weakCV}}
f^\eps (t,x,v) \rightharpoonup \rho(t,x)F(v) \mbox{ weakly in } L^\infty\big(0,T; L^2_{F^{-1}(v)} (\Omega\times\RR^d) \big)
\end{equation}
where $\rho(t,x)$ is the limit of the macroscopic densities $\rho^\eps = \iRd f^\eps \d v$.
\end{repprop}
\begin{proof}
Multiplying \eqref{eq:vlfp} by $f^\eps / F(v)$ and integrating over $x$ and $v$ one gets, after integrations by parts, for the absorption boundary condition:
\begin{align*}
\eps^{2s-1} \frac{\mbox{d}}{\d t} \underset{\Omega\times\RR^d}{\iint} \big( f^\eps \big)^2 \frac{\d x \d v}{F(v)} + \underset{\Sigma_+}{\iint} |\gamma_+ f^\eps |^2 |n(x)\cdot v| \frac{\d \sigma(x) \d v}{F(v)} + \frac{1}{\eps} \underset{\Omega}{\int} \mathcal{D}^s( f^\eps ) \d x =0
\end{align*}
and in the specular reflections case:
\begin{align*}
\eps^{2s-1} \frac{\mbox{d}}{\d t} \underset{\Omega\times\RR^d}{\iint} \big( f^\eps \big)^2 \frac{\d x \d v}{F(v)}  + \frac{1}{\eps} \underset{\Omega}{\int} \mathcal{D}^s( f^\eps ) \d x =0.
\end{align*}
In both cases, since the dissipation in non-negative, we see that
\begin{align*}
\frac{\text{d}}{\d t} \lVert f^\eps \lVert_{L^2_{F^{-1}(v)} (\Omega\times\RR^d)} \leq 0
\end{align*}
so $f^\eps(t,\cdot,\cdot)$ is bounded in $L^2_{F^{-1}(v)} (\Omega\times\RR^d)$. Moreover, we have
\begin{align*}
\underset{(0,T)\times\Omega}{\iint} \mathcal{D}^s (f^\eps) \d t \d x &\leq \eps^{2s} \Big( \lVert f_{in} \lVert_{L^2_{F^{-1}(v)} (\Omega\times\RR^d)} -  \lVert f^\eps (T,x,v) \lVert_{L^2_{F^{-1}(v)} (\Omega\times\RR^d)} \Big)\\
& \underset{\eps\rightarrow 0}{\longrightarrow} 0 
\end{align*}
and furthermore, by definition of $\rho^\eps$, we see that 
\begin{align*}
 \rho^\eps \leq  \bigg( \underset{\RR^d}{\int} \big(f^\eps\big)^2 \frac{\d v}{F(v)} \bigg)^{1/2} \bigg( \underset{\RR^d}{\iint} F(v) \d v \bigg)^{1/2} = \lVert f^\eps \lVert_{L^2_{F^{-1}(v)} (\RR^d)} 
\end{align*}
so that $\rho^\eps$ is also bounded in $L^\infty(0,T; L^2(\Omega))$. The boundedness of $f^\eps$ in \\
$L^\infty\big(0,T; L^2_{F^{-1}(v)} (\Omega\times\RR^d)\big)$ gives us the existence of a weak limit $\of$. Since the dissipation goes to $0$, \eqref{eq:disscontrol} implies that the limit is in the kernel of the fractional Fokker-Planck operator, i.e. there exists a function $\rho$ such that $\of(t,x,v) = \rho(t,x)F(v)$. And finally, the boundedness of $\rho^\eps$ gives us existence of a weak limit $\bar{\rho}$ and by uniqueness of the limit $\bar{\rho} = \rho$, which concludes the proof.
\end{proof}

\section{Absorption in a smooth convex domain} \label{section:abs}

We focus in this section on the absorption boundary condition \eqref{eq:Aop} and show how we can easily adapt the method developed in \cite{Cesbron12} for the anomalous diffusion limit of the fractional Vlasov-Fokker-Planck equation to this bounded domain case. \\
According to Definition \ref{def:weaksolDir}, if $f_\eps$ is a weak solution of the rescaled equation \eqref{eq:vlfp}-\eqref{eq:vlfpid} with absorption \eqref{eq:Aop} on the boundary then for all $\phi$ satisfying \eqref{eq:weakphiA} we have
\begin{subequations} 
\begin{align}
&\underset{Q_T}{\iiint} f^\eps \Big( \eps^{2s-1}\pa_t \phi - \eps^{-1}\Delsv \phi \Big) \d t \d x\d v \label{eq:wfAmain}\\
&+ \underset{Q_T}{\iiint} f^\eps \Big( v \cdot \na_x \phi - \eps^{-1} v\cdot \na_v \phi \Big) \d t\d x\d v \label{eq:wfAcc}\\
&+ \eps^{2s-1}\underset{\Omega\times\RR^d}{\iint} f_{in} (x,v) \phi (0,x,v) \d x\d v =0. \label{eq:wfAit}
\end{align}
\end{subequations}
We recognize, in \eqref{eq:wfAcc}, the {\it characteristic lines} of \eqref{eq:Fourier}. In order to take advantage of the scalar-hyperbolic structure of \eqref{eq:Fourier} we want to consider test functions which are constant along those lines. This is the purpose of the auxiliary problem.

\subsection{Auxiliary problem}

In the absorption case, it is rather simple to adapt the auxiliary problem introduced in \cite{Cesbron12} to the domain $\Omega$. For any $\psi \in\mathcal{D}([0,T)\times\Omega)$ we introduce the auxiliary problem:
\begin{subequations}
\begin{align}
&\eps v\cdot \na_x\phi^\eps - v\cdot \na_v \phi^\eps = 0 \hspace{1cm} &\forall (t,x,v) \in \RR^+\times\Omega\times\RR^d, \tag{\ref{eq:APabsmain}}\\
&\phi^\eps(t,x,0) = \psi(t,x) &\forall (t,x)\in\RR^+\times\Omega, \tag{\ref{eq:APabsit}}\\
&\gamma_+ \phi^\eps(t,x,v) = 0 &\forall (t,x,v)\in\RR^+\times\Sigma_+.\tag{\ref{eq:APabsbc}}
\end{align}
\end{subequations}
Since the boundary condition \eqref{eq:APabsbc} is immediately compatible with the assumption of compact support in $\Omega$ for the test function $\psi$, the construction of the solution $\phi_\eps$ is rather straightforward:
\begin{prop}
For any $\psi\in\mathcal{D}([0,T)\times\Omega)$, $\phi^\eps$ defined as:
\begin{equation*}
\phi^\eps(t,x,v) =\bar{\psi}(t,x+\eps v)
\end{equation*}
where $\bar{\psi}$ is the extension of $\psi$ by $0$ outside $\Omega$, is a solution of \eqref{eq:APabsmain}-\eqref{eq:APabsit}-\eqref{eq:APabsbc}.
\end{prop}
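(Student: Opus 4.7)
The plan is to verify directly that the candidate $\phi^\eps(t,x,v) = \bar\psi(t,x+\eps v)$ satisfies each of \eqref{eq:APabsmain}, \eqref{eq:APabsit} and \eqref{eq:APabsbc}. Since $\psi \in \mathcal{D}([0,T)\times\Omega)$ has compact support strictly inside $\Omega$, the extension $\bar\psi$ by zero is $C^\infty$ on $[0,T)\times\RR^d$, so the differentiations below are classical.

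First I would check \eqref{eq:APabsit}: at $v=0$ and $x\in\Omega$, $\phi^\eps(t,x,0)=\bar\psi(t,x)=\psi(t,x)$. Next, the PDE \eqref{eq:APabsmain} follows immediately from the chain rule, since $\nabla_x\phi^\eps(t,x,v)=(\nabla\bar\psi)(t,x+\eps v)$ and $\nabla_v\phi^\eps(t,x,v)=\eps\,(\nabla\bar\psi)(t,x+\eps v)$, so that $\eps v\cdot\nabla_x\phi^\eps - v\cdot\nabla_v\phi^\eps$ vanishes identically.

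The only step requiring a substantive argument is the outgoing boundary condition \eqref{eq:APabsbc}, and this is where convexity of $\Omega$ enters crucially. Fix $(x,v)\in\Sigma_+$, that is $x\in\dO$ and $n(x)\cdot v>0$. Convexity of $\Omega$ means that the supporting hyperplane at $x$ contains $\Omega$ in one closed half-space, $\Omega\subset\{y\in\RR^d : (y-x)\cdot n(x)\leq 0\}$. For any $\eps>0$ one then has $(x+\eps v - x)\cdot n(x) = \eps\, n(x)\cdot v > 0$, which forces $x+\eps v\notin\bO$, hence $\bar\psi(t,x+\eps v)=0$. This is exactly $\gamma_+\phi^\eps(t,x,v)=0$.

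There is no real obstacle: the statement is immediate once the candidate $\bar\psi(t,x+\eps v)$ is written down. The only nontrivial ingredient is the use of convexity of $\Omega$ to guarantee that the outgoing free-transport trajectory $t\mapsto x+\eps t v$ leaves $\bO$ at once, so that extending $\psi$ by zero automatically converts the compact-support condition $\supp\psi\subset[0,T)\times\Omega$ into the vanishing boundary trace on $\Sigma_+$.
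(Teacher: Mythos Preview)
Your proof is correct and follows essentially the same approach as the paper: a direct verification of each of the three conditions, with the chain rule handling \eqref{eq:APabsmain} and the observation that $(x,v)\in\Sigma_+$ forces $x+\eps v\notin\Omega$ handling \eqref{eq:APabsbc}. You make the role of convexity in this last step more explicit via the supporting-hyperplane argument, whereas the paper simply states that $(x,v)\in\Sigma_+$ implies $x+\eps v\notin\Omega$; but the substance is identical.
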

\begin{proof}
The proof is almost immediate. For \eqref{eq:APabsmain} we write:
\begin{align*}
\eps v\cdot \na_x\phi^\eps - v\cdot \na_v \phi^\eps &= \eps v\cdot \na_x[\bar{\psi}(t,x+\eps v)] - v\cdot \na_v[\bar{\psi}(t,x+\eps v)]\\
&= \eps v\cdot \na\bar{\psi}(t,x+\eps v)- \eps v\cdot \na\bar{\psi} (t,x+\eps v)=0.
\end{align*}
Moreover, the definition of $\phi^\eps$ ensures \eqref{eq:APabsit} and, thanks to the compact support of $\psi$ in $\Omega$ we also see that $\phi^\eps(t,x,v)=0$ for any $(x,v)\in\Sigma_+$ since it means that $x+\eps v \notin \Omega$.
\end{proof}
For such a $\phi^\eps$ we see that:
\begin{equation} \label{eq:scalingDels}
\begin{aligned}
\Delsv \phi^\eps(t,x,v) &= c_{d,s} P.V. \underset{\RR^d}{\int} \frac{\phi^\eps(t,x,v)-\phi^\eps(t,x,w)}{|v-w|^{d+2s}} \d w \\\
&= c_{d,s} P.V.\underset{\RR^d}{\int} \frac{\bar{\psi}(t,x+\eps v)-\bar{\psi}(t,x+\eps w)}{|v-w|^{d+2s}} \d w \\
&= c_{d,s} P.V.\underset{\RR^d}{\int} \frac{\bar{\psi}(t,x+\eps v)-\bar{\psi}(t,w)}{\eps^{-d-2s}|x+\eps v-w|^{d+2s}} \eps^{-d} \d w \\
&= \eps^{2s} \Dels\bar{\psi}(t,x+\eps v) 
\end{aligned}
\end{equation}
so that the weak formulation \eqref{eq:wfAmain}-\eqref{eq:wfAcc}-\eqref{eq:wfAit} becomes
\begin{equation}\label{eq:wfAbs}
\underset{Q_T}{\iiint} f^\eps \bigg( \pa_t \bar{\psi} - \Dels \bar{\psi}(t,x+\eps v)\bigg) \d t \d x \d v + \underset{\Omega\times\RR^d}{\iint} f_{in} (x,v) \bar{\psi}\big(0,x+\eps v\big) \d x \d v =0. 
\end{equation}
\subsection{Macroscopic Limit} \label{sec:MLabs}

In Section \ref{section:apriori} we proved that $f^\eps$ converges weakly in $L^\infty\big(0,T; L^2_{F^{-1}(v)} (\Omega\times\RR^d) \big)$. Hence, in order to pass to the limit in the weak formulation \eqref{eq:wfAbs} we need to show that
\begin{align} \label{eq:strongcvabs}
\pa_t \bar{\psi}(t,x+\eps v) - \Dels \bar{\psi}(t,x+\eps v) \underset{\eps \rightarrow 0}{\longrightarrow} \pa_t \bar\psi(t,x)-\Dels \bar\psi(t,x) 
\end{align} 
at least strongly in $L^\infty\big(0,T; L^2_{F(v)} (\Omega\times\RR^d) \big)$. The proof of this convergence is rather similar to its equivalent in the unbounded case presented in \cite{Cesbron12}. As a consequence we will not give any unnecessary details and instead we briefly recall the main arguments. First, we note that the continuity of $\bar\psi$ readily implies the convergence of the second term in \eqref{eq:wfAbs}:
\begin{align*}
\underset{\Omega\times\RR^d}{\iint} f_{in} (x,v) \bar{\psi}\big(0,x+\eps v\big) \d x \d v \underset{\eps \rightarrow 0}{\longrightarrow} \underset{\Omega}{\int} \rho_{in}(x) \bar\psi(0,x) \d x  .
\end{align*}
Secondly, the strong convergence of \eqref{eq:strongcvabs} follows from the fact that if $\bar\psi$ is in $\mathcal{D}([0,T)\times\Omega)$ then
\begin{align*}
\pa_t \bar\psi \in \mathcal{D}([0,T)\times\Omega) \hspace{0.5cm} \text{and} \hspace{0.5cm} \Dels \bar\psi \in \mathcal{D}([0,T)\times\RR^d)\cap L^2([0,T)\times\RR^d)
\end{align*}
because the pseudo-differential operator $\Dels$ can be defined as an operator from the Schwartz space to $L^2(\RR^d)$, see e.g. Proposition 3.3 in \cite{DiNezza+}. As a consequence, it is straightforward to use dominated convergence on both terms and prove the strong convergence of \eqref{eq:strongcvabs} in $L^\infty\big(0,T; L^2_{F(v)} (\Omega\times\RR^d) \big)$, noticing that $\int F(v) \d v = 1$. \\
Hence, we can take the limit in the weak formulation and find that $\rho$ satisfies:
\begin{equation} \label{eq:wfFracHeatAbs}
\underset{(0,T)\times\Omega}{\iint} \rho(t,x) \big( \pa_t \psi(t,x) - \Dels \psi(t,x)\big) \d t \d x + \underset{\Omega}{\int} \rho_{in}(x) \psi(0,x) \d x = 0.
\end{equation}
Since $\rho$ is the limit of $\rho^\eps$ it is only defined on $\Omega$. If we extend it by $0$ on the complementary $\RR^d\setminus\Omega$, then we can integrate over $\RR^d$ instead of $\Omega$ and that concludes the proof of Theorem \ref{thm:mainA}.

\section{Specular Reflection in a bounded domain} \label{section:sr}

We now turn to the more challenging case of the specular reflection boundary conditions \eqref{eq:SRop}. From Definition \ref{def:weaksolSR} we know that if $f_\eps$ is a weak solution of fractional Vlasov-Fokker-Planck equation with specular reflection on the boundary \eqref{eq:vlfp}-\eqref{eq:vlfpid}-\eqref{eq:SRop} then for any $\phi$ satisfying 
\begin{equation}  \tag{\ref{eq:weakphiSR}} 
\begin{aligned}
&\phi \in C^\infty ( Q_T ) \hspace{1cm} \phi(T,\cdot,\cdot)=0 \\
&\gamma_+\phi(t,x,v) = \gamma_-\phi\big(t,x,\mathcal{R}_x(v)\big) \hspace{1cm} \forall (t,x,v)\in [0,T)\times\Sigma_+
\end{aligned}
\end{equation}
we have, analogously to the absorption case:
\begin{align}
&\underset{Q_T}{\iiint} f^\eps \Big( \eps^{2s-1}\pa_t \phi - \eps^{-1}\Delsv \phi \Big) \d t \d x\d v \tag{\ref{eq:wfAmain}} \\
&+ \underset{Q_T}{\iiint} f^\eps \Big( v \cdot \na_x \phi - \eps^{-1} v\cdot \na_v \phi \Big) \d t\d x\d v \tag{\ref{eq:wfAcc}}\\
&+ \eps^{2s-1}\underset{\Omega\times\RR^d}{\iint} f_{in} (x,v) \phi (0,x,v) \d x\d v =0. \tag{\ref{eq:wfAit}}
\end{align}
Once again, we would like to take advantage of the scalar-hyperbolic structure of \eqref{eq:Fourier} in order to define a sub-class of test function $\phi$ that will allow us to identify the anomalous diffusion limit of this equation. This is the purpose of the following auxiliary problem. 

\subsection{Auxiliary problem} \label{sec:AuxProbSR}

For a smooth function $\psi$, we define $\phi_\eps$ as the solution of 
\begin{subequations}
\begin{align}
&\eps v\cdot \na_x\phi^\eps - v\cdot \na_v \phi^\eps = 0 \hspace{1cm} &\forall (t,x,v) \in \RR^+\times\Omega\times\RR^d, \tag{\ref{eq:APSRmain}}\\
&\phi^\eps(t,x,0) = \psi(t,x) &\forall (t,x)\in\RR^+\times\Omega, \tag{\ref{eq:APSRit}}\\
&\gamma_+ \phi^\eps(t,x,v) = \gamma_-\phi^\eps\big(t,x,\mathcal{R}_x(v)\big) &\forall (t,x,v)\in\RR^+\times\Sigma_+.\tag{\ref{eq:APSRbc}}
\end{align}
\end{subequations}
with $\mathcal{R}_x(v) = v - 2 \big(n(x)\cdot v \big)n(x)$ for $x$ in $\dO$. \\
Because of the specular reflection boundary condition \eqref{eq:APSRbc}, it is much more challenging to construct a solution $\phi_\eps$ to this problem than it was in the absorption case. In fact, we will see later on that if we want to have enough regularity estimates on $\phi_\eps$ in order to take the limit in the weak formulation of the fractional Vlasov-Fokker-Planck equation, we will need an additional assumption on the initial condition $\psi$. Setting aside these considerations for the moment, let us show how we can construct $\phi_\eps$ from a smooth function $\psi$ through the definition of a function $\eta : \Omega\times\RR^d \mapsto \bO$ in the following sense:
\begin{repprop}{prop:solAPSR}
If $\Omega$ is either a half-space or smooth and strongly convex, then there exists a function $\eta : \Omega\times\RR^d \rightarrow \bO$ such that 
\begin{equation}
\phi^\eps (t,x,v) = \psi\big(t,\eta(x,\eps v) \big)
\end{equation}
is a solution of the auxiliary problem \eqref{eq:APSRmain}-\eqref{eq:APSRit}-\eqref{eq:APSRbc}.
\end{repprop}
\begin{proof}
The proof will consist of two steps. First we construct an appropriate $\eta$ by identifying the characteristic lines underlying the hyperbolic problem \eqref{eq:APSRmain}-\eqref{eq:APSRbc}, and then we check that $\phi^\eps$ defined as above is indeed solution of the auxiliary problem. 

\subsubsection{Construction of $\eta$}

The purpose of $\eta$ is to follow the characteristic lines defined by \eqref{eq:APSRmain} and \eqref{eq:APSRbc}. Those lines $(x(s),v(s))$, parametrised by $s\in[0,\infty)$, are given by:
\begin{equation}\tag{\ref{eq:cc}}
  \left\{
      \begin{array}{llll}
        &\dot{x}(s) = \eps v(s) \hspace{15mm} &x(0)=x^{in},  \\
        &\dot{v}(s) = -v(s) &v(0)=v^{in},\\
        & \text{If } x(s)\in\dO \text{ then } v(s^+)= \mathcal{R}_{x(s)}(v(s^{-})).
      \end{array}
    \right.
\end{equation}
Solving this system of ODEs, we see that this trajectory $x(s)$ consists of straight lines with exponentially decreasing velocity $v(s)$ reflected upon hitting the boundary. More precisely, if we denote $s_i$ the times of reflection, i.e. the times for which $x(s_i)\in\dO$, with the convention $s_0=0$, we have for the velocity:
\begin{equation}
  \left\{
      \begin{aligned}
        &v(s) = e^{-s} v_0 &\text{ for } s\in[0,s_1),\\
		&v(s_i^+) = \mathcal{R}_{x(s_i)} v(s_i^-),\\        
        &v(s) = e^{-(s-s_i)} v(s_i^+)  &\text{ for } s\in(s_i,s_{i+1}),
      \end{aligned}
    \right.
\end{equation}
which gives the trajectory, for $s\in(s_i,s_{i+1})$:
\begin{align*}
x(s) &= x_0 + \eps \int_0^s v(\tau) \text{d}\tau\\
	 &= x_0 + \eps \underset{k=0}{\overset{i-1}{\sum}} \int_{s_{k}}^{s_{k+1}} v(\tau) \text{d}\tau + \eps \int_{s_{i}}^{s} v(\tau) \text{d}\tau  \\
	 &= x_0 + \eps \underset{k=0}{\overset{i-1}{\sum}} \left(1-e^{-(s_{k+1}-s_k)}\right) v(s_k^+) + \eps \left(1-e^{-(s-s_k)}\right)v(s_i^+).
\end{align*}
Instead of considering an exponentially decreasing velocity $v(s)$ on an infinite interval $s\in[0,\infty)$, we would like to consider a constant speed on a finite interval $[0,1)$. To that end, we notice that the reflection operator $\mathcal{R}$ is isometric in the sense that:
\begin{align*}
v(s_i^+) &= \mathcal{R}_{x(s_i)}\big( v(s_i^-)\big)\\
		  &= \mathcal{R}_{x(s_i)} \big( e^{-(s_i-s_{i-1})} v(s_{i-1}^+) \big)\\
		  &= e^{-(s_i-s_{i-1})} \mathcal{R}_{x(s_i)} \circ \mathcal{R}_{x(s_{i-1})} \big( e^{-(s_{i-1}-s_{s-2})} v(s_{i-2}^+) \big)\\
		  &= e^{-(s_i-s_{i-2})} \mathcal{R}_{x(s_i)} \circ \mathcal{R}_{x(s_{i-1})} \circ \mathcal{R}_{x(s_{i-2})} \big( e^{-(s_{i-2}-s_{s-3})} v(s_{i-3}^+)\big) \\
		  &= e^{-(s_i-0)} \mathcal{R}_{x(s_i)} \circ \mathcal{R}_{x(s_{i-1})} \circ \dots \circ \mathcal{R}_{x(s_1)} \big( v_0 \big).
\end{align*}
Furthermore, we introduce the notation $R^i$ denoting: 
\begin{equation} \label{def:Ri}
  \left\{
      \begin{aligned}
        & R^0 = Id,\\
        & R^i= \mathcal{R}_{x(s_i)} \circ R^{i-1},
      \end{aligned}
    \right.
\end{equation}
and a new velocity $w(s):= e^s v(s)$ which then satisfies:
\begin{equation}
  \left\{
      \begin{aligned}
        &w(s) = v_0 &\text{ for } s\in(0,s_1),\\
		& w(s_i) = R^i v_0,\\        
        &w(s) = R^i w(s_i)  &\text{ for } s\in[s_i,s_{i+1}).
      \end{aligned}
    \right.
\end{equation}
It is easy to check that for any $s$, $|w(s)| = |v_0|$. The trajectory $x(s)$ can be written, with the velocity $w(s)$ as:
\begin{align*}
x(s) &= x_0 + \eps \int_0^s e^{-\tau} w(\tau) \text{d}\tau\\
	 &= x_0 + \eps \underset{k=0}{\overset{i-1}{\sum}} \left(e^{-s_k}-e^{-s_{k+1}}\right) w(s_k) + \eps \left(e^{-s}-e^{-s_i}\right)w(s_i).
\end{align*}
Finally, we introduce a new parametrisation $\tau = 1 - e^{-s} \in [0,1)$ and the corresponding reflection times $\tau_i := 1-e^{-s_i}$ with which we have, for any $\tau \in [\tau_i,\tau_{i+1})$ with $i\geq 1$:
\begin{equation} \label{eq:etaexplicit}
  \left\{
      \begin{aligned}
        & x(\tau) = x_0 + \eps \underset{k=0}{\overset{i-1}{\sum}} \left(\tau_{k+1}-\tau_{k}\right) w(\tau_k) + \eps \left(\tau-\tau_i\right)w(\tau_i),\\
        & w(\tau) = w(\tau_i) = R^i w_0.
      \end{aligned}
    \right.
\end{equation}

\begin{figure}[h] 
\centering
\includegraphics[width=11cm,height=11cm]{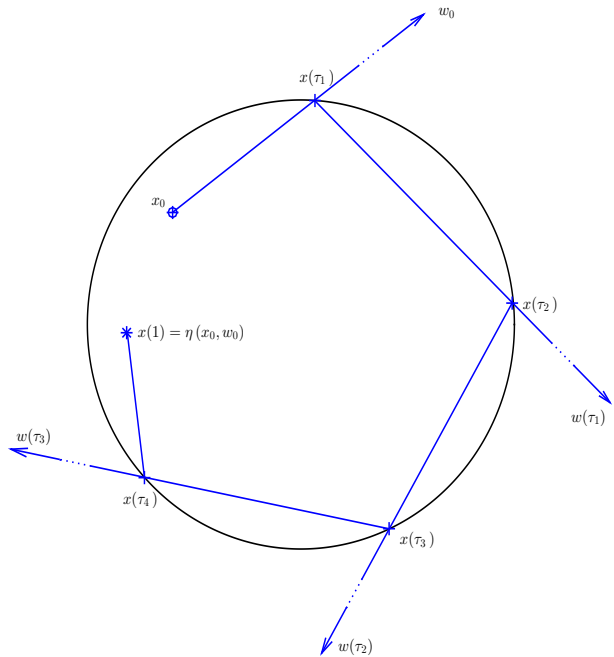}
\caption{Example of trajectory of $\Omega$ is a disk}
\label{fig:extraj}
\end{figure}

These trajectories can be seen as geodesic trajectory in a Hamiltonian billiard, as illustrated by Figure \ref{fig:extraj}. In order to solve \eqref{eq:APSRmain}-\eqref{eq:APSRbc} using a characteristic method we would like to define a function $\eta^\eps$ that relates $(x_0,w_0)$ to $x(\tau$=$1)$ (or $x(s$=$\infty)$ for the initial parametrization). It is natural to construct $\eta^\eps$ by induction on the number of reflections. Such a construction is already well known in the field of mathematical billiards. We refer for instance to the Chapter 2 of the monograph of Chernov and Markarian \cite{ChernovMarkarian} for the construction in dimension 2 and the paper of Halpern \cite{Halpern} where he defines a function $F_t(x,v)$ which gives the position and forward direction of motion of a particle in the billiard, in relation to which our $\eta^\eps(x,v)$ is just the first component of $F_{t=\eps}(x,v)$. To make sure $F_t$, hence $\eta^\eps$, is well defined, we just need to make sure that there are no accumulations of reflection times, i.e. that there is only a finite number of reflections occurring during a finite time interval. To that end, we consider the point on the boundary at which these accumulations would happen. Chernov and Markarian explain that it cannot happen on a flat surface and, moreover, in dimension two, Halpern gives a result which can be stated as follows
\begin{thm*}
Let us call $\zeta$ the function such that 
$$ \Omega = \lbrace x\in\RR^d / \zeta(x)<0 \rbrace \hspace{0.2cm} \text{and} \hspace{0.2cm} \dO =\lbrace x\in\RR^d / \zeta(x)=0 \rbrace.$$
If $\zeta$ has a bounded third derivative and nowhere vanishing curvature on $\dO$ in the sense that there exists a constant $C_\zeta >0$ such that for all $\xi\in\RR^d$:
\begin{align*}
\underset{i,j=1}{\overset{d}{\sum}} \xi_i \frac{\pa^2 \zeta}{\pa x_i \pa x_j} \xi_j \geq C_\zeta |\xi|^2
\end{align*} 
then $F_t(x,v)$ is well defined for all $(x,v)\in\Omega\times\RR^d$.
\end{thm*}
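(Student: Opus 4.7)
The plan is to argue by contradiction. Suppose that $F_t(x,v)$ fails to be defined for some $(x,v)\in\Omega\times\RR^d$ and some finite $t>0$. Between consecutive reflections the trajectory \eqref{eq:cc} is smooth (a straight segment in the reparametrisation $\tau=1-e^{-s}$, at constant rescaled speed $|w(\tau)|=|v_0|$ as established in the construction of $\eta$), so the only possible obstruction is an accumulation of reflection times, i.e.\ an increasing sequence $s_1<s_2<\cdots$ with $s_i\nearrow t^\ast\leq t$. By compactness of $\bO$ a subsequence of the reflection points $x_i:=x(s_i)\in\dO$ converges to some $x^\ast\in\dO$; and since the total path length satisfies
\begin{equation*}
\sum_{i}|x_{i+1}-x_i| \;=\; \eps|v_0|\sum_{i}(\tau_{i+1}-\tau_i) \;\leq\; \eps|v_0|,
\end{equation*}
one automatically has $|x_{i+1}-x_i|\to 0$, so all subsequent analysis can be localised to a small neighbourhood of $x^\ast$.

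In that neighbourhood I would use the hypotheses on $\zeta$ to control the local geometry: since $\na\zeta\neq 0$ on $\dO$, the lower bound $\sum_{i,j}\xi_i\,\pa_{ij}^2\zeta\,\xi_j \geq C_\zeta|\xi|^2$ translates into a strictly positive lower bound on the second fundamental form of $\dO$ at $x^\ast$, while the bounded third derivative of $\zeta$ gives a Lipschitz bound on its Hessian. Consequently, up to a rotation, $\dO$ is the graph of a $C^{2,1}$ strongly convex function over $T_{x^\ast}\dO$, with Hessian uniformly comparable to the identity throughout a neighbourhood of $x^\ast$. The second observation I would use is that at each collision the vectors $v(s_i^-)$, $v(s_i^+)$ and $n(x_i)$ are coplanar by the reflection law, and this plane also contains the chord $x_{i+1}-x_i$; hence the collision analysis reduces at each step to the two-dimensional situation Halpern originally treated, and the ambient dimension $d$ enters only through the choice of this plane.

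The heart of the proof is then a quantitative estimate on the \emph{grazing angle} $\theta_i\in[0,\pi/2]$ between $v(s_i^+)$ and the tangent plane $T_{x_i}\dO$. Taylor-expanding $\zeta$ along the chord from $x_i$ to $x_{i+1}$, the first-order term cancels (both endpoints lie on $\dO$), the second-order term is bounded below by $C_\zeta|x_{i+1}-x_i|^2$, and the third-order remainder is controlled by $\|\zeta\|_{C^3}|x_{i+1}-x_i|^3$; combined with the reflection identity this yields an inequality of the form
\begin{equation*}
\theta_{i+1} \;\geq\; \theta_i + c\,|x_{i+1}-x_i|,
\end{equation*}
valid for $i$ large, with $c>0$ depending only on $C_\zeta$ and $\|\zeta\|_{C^3}$. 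This is the main obstacle of the proof and the step where both hypotheses are used: a flat piece of boundary (no lower bound on the Hessian) or a corner (no $C^3$ control) destroys the estimate and actually allows infinite reflections in finite time, so the constants must genuinely be tracked.

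Once this inequality is in hand, the contradiction comes by summation: telescoping gives $\theta_N \geq \theta_1 + c\sum_{i<N}|x_{i+1}-x_i|$, and because $\theta_N\leq \pi/2$ while the total chord length is bounded, one obtains $\theta_i\to\theta_\infty>0$. On the other hand, the same local geometry shows that a trajectory leaving $\dO$ at grazing angle $\theta$ cannot return to $\dO$ in a chord shorter than $c'\sin\theta$, where $c'>0$ depends only on the upper bound of the Hessian of $\zeta$ (the boundary curves away quadratically at a controlled rate). Hence $|x_{i+1}-x_i|\geq c'\sin\theta_\infty>0$ for all large $i$, contradicting $|x_{i+1}-x_i|\to 0$. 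No accumulation can therefore occur, and $\eta$, equivalently $F_t$, is well defined on all of $\Omega\times\RR^d$, as claimed.
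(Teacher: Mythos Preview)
The paper does not prove this theorem; it is quoted (without proof) from Halpern \cite{Halpern} in dimension two and from Safarov--Vassilev \cite{SafarovVassilev} in higher dimension, solely to justify that the billiard flow, hence $\eta$, is well defined. There is therefore no ``paper's proof'' against which to compare your argument.

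That said, your outline has the right architecture --- contradiction via an accumulation of reflection times, localisation near the accumulation point, and a quantitative grazing-angle analysis using the Taylor expansion of $\zeta$ along successive chords --- but the central inequality you state is incorrect. You claim that the expansion yields
\[
\theta_{i+1}\ \ge\ \theta_i + c\,|x_{i+1}-x_i|,
\]
yet in the circle (which certainly satisfies the hypotheses) the grazing angle is exactly constant along any orbit while the chord lengths are fixed and positive, so this inequality is false. What the Taylor expansion of $\zeta$ along the chord actually gives, after cancelling the zeroth-order terms and reading off the normal component, is the \emph{two-sided} estimate
\[
c\,\ell_i\ \le\ \sin\theta_i\ \le\ C\,\ell_i,\qquad
c\,\ell_i\ \le\ \sin\theta_{i+1}\ \le\ C\,\ell_i,
\]
with constants coming from the Hessian bounds on $\zeta$; subtracting the two expansions (at $x_i$ and at $x_{i+1}$) and invoking the $C^3$ bound then yields only
\[
|\theta_{i+1}-\theta_i|\ \le\ C\,\ell_i^{\,2},
\]
not a monotone increase. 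From here the contradiction runs differently from what you wrote: combining $\theta_i\sim\ell_i$ with $|\theta_{i+1}-\theta_i|\le C\theta_i^2$ gives $1/\theta_{i+1}-1/\theta_i\le C'$, hence $\theta_i\gtrsim 1/i$ and $\sum_i \ell_i\gtrsim\sum_i 1/i=\infty$, contradicting the finiteness of the total path length. This is essentially Halpern's argument. Your final step (if $\theta_i\to\theta_\infty>0$ then chord lengths are bounded below) is fine, but your route to $\theta_\infty>0$ via the monotone inequality does not work; the harmonic-series divergence is what actually closes the argument.
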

We call \textit{strongly convex} such domains. This result was later extended by Safarov and Vassilev to higher dimension as stated in Lemma 1.3.17 of \cite{SafarovVassilev}. We will consider $\Omega$ to be a half-space or a ball, neither of which allows for the accumulation of reflection times hence $\eta^\eps$ can be defined as:
\begin{equation}
\eta^\eps(x_0,w_0) = x(\tau\text{=}1) = x_0 + \eps \underset{k=0}{\overset{M-1}{\sum}} \left(\tau_{k+1}-\tau_{k}\right) w(\tau_k) + \eps \left(1-\tau_M\right)w(\tau_M)
\end{equation}
where $M=M(x_0,w_0)$ is the (finite) number of reflections undergone by the trajectory that starts at $(x_0,w_0)$. Note that this expression yields immediately that for any $(x,v)\in\Omega\times\RR^d$:
$$ \eta^\eps(x,v) = \eta^1(x,\eps v)$$
so that, from now on, we will forgo the superscript $1$ and always consider $\eta(x,\eps v)$. 
\begin{rmq}
Note that in general domain, possibly non-convex, it has been proved by Briant in the appendix of \cite{Briant2015} and by Kim and Lee in \cite{Kim2017} for non-convex cylindrical domains, that the set of all $(x,v)$ from which the trajectory described above undergoes infinitely many reflections in finite time is of measure zero in the phase space so in general domains $\eta$ is well defined almost-everywhere. 
\end{rmq}

\subsubsection{$\phi^\eps$ solution of the auxiliary problem} 

We now define, for any given smooth function $\psi$:
$$ \phi^\eps(t,x,v) = \psi\big(t,\eta(x,\eps v)\big).$$
By construction, we know that $\phi^\eps$ satisfies \eqref{eq:APSRit} and \eqref{eq:APSRbc}. For \eqref{eq:APSRmain} we differentiate along the characteristic curves:
$$\frac{d}{ds} \phi^\eps (t,x(s),v(s)) = \frac{d}{ds }\psi \Big(t, \eta\big(x(0),\eps v(0)\big) \Big) = 0$$
which yields by \eqref{eq:cc}
\begin{align*}
\dot{x}(s) \cdot \na_x \phi^\eps(x(s),v(s)) + \dot{v}(s) \cdot\na_v\phi^\eps(x(s),v(s)) &= 0\\
\eps v(s) \cdot\na_x \phi^\eps(x(s),v(s)) - v(s) \cdot\na_v \phi^\eps(x(s),v(s)) &=0. 
\end{align*}
Take $s=0$ and you get:
$$ \eps v \cdot\na_x \phi^\eps(x,v) - v \cdot\na_v \phi^\eps(x,v) = 0$$
which concludes the proof of Proposition \ref{prop:solAPSR}.
\end{proof}

The solution $\phi^\eps$ has a scaling property similar to \eqref{eq:scalingDels} for the solution of the auxiliary problem in the absorption case, namely :
\begin{align*}
\Delsv \Big[ \phi^\eps(t,x,v)\Big] &= c_{d,s} P.V. \iRn \frac{\psi\big(t,\eta(x,\eps v)\big)-\psi\big(t,\eta(x,\eps w)\big)}{|v-w|^{N+2s}} \d w\\
&= \eps^{2s} c_{d,s} P.V. \iRn \frac{\psi\big(t,\eta(x,\eps v)\big)-\psi\big(t,\eta(x,w)\big)}{|\eps v-w|^{N+2s}} \d w\\
&= \eps^{2s} \Delsv \Big[ \psi\big( t,\eta(x, \cdot)\big)\Big] (\eps v)
\end{align*}
Hence, the weak formulation of \eqref{eq:vlfp}-\eqref{eq:vlfpid}-\eqref{eq:SRop} becomes:
\begin{equation} \label{eq:wfSReta}
\begin{aligned}
&\underset{Q_T}{\iiint} f^\eps \bigg(  \pa_t \psi -   \Delsv \Big[ \psi\big( t,\eta(x, \cdot)\big)\Big] (\eps v) \bigg) \d t \d x \d v \\
&\hspace{1.5cm} + \underset{\Omega\times\RR^d}{\iint} f_{in} (x,v) \psi\big(0,\eta(x,\eps v)\big) \d x \d v =0. 
\end{aligned}
\end{equation}

\subsection{Macroscopic limit}

Using the same arguments as in the unbounded or the absorption case, one can show that if $\psi \in \mathcal{D}([0,T)\times\bO)$ then
\begin{align*}
\lim_{\eps \searrow 0} \underset{Q_T}{\iiint} f^\eps \pa_t \psi\big(t,\eta(x,\eps v\big) \d t\d x \d v = \underset{(0,T)\times\Omega}{\iint} \rho(t,x)\psi(t,x) \d t \d x
\end{align*}
and 
\begin{align*}
\lim_{\eps \searrow 0} \underset{\Omega\times\RR^N}{\iint} f_{in} (x,v) \phi^\eps(0,x,v) \d x \d v = \underset{\Omega}{\int} \rho_{in}(x)  \psi(0,x) \d x.
\end{align*}
For the last term, we prove the following Lemma:
\begin{lemma} \label{lemma:CVSR}
If $\Omega$ is a half-space or a ball in $\RR^d$ then for any $\psi\in \mathcal{C}^\infty_c(Q_T)$ if $s<1/2$ and any $\psi \in \mathfrak{D}_T(\Omega)$ if $s\geq 1/2$ where we recall that $\mathfrak{D}_T$ is defined as 
\begin{equation} \tag{\ref{eq:defDT}}
\mathfrak{D}_T(\Omega) = \Big\{ \psi\in\mathcal{C}^\infty ([0,T)\times\bO) \text{ s.t. } \psi(T,\cdot)=0 \text{ and } \na_x \psi(t,x)\cdot n(x) =0 \text{ on } \dO \Big\},
\end{equation} 
we have 
\begin{equation} \label{eq:CVSR}
\begin{aligned}
\lim_{\eps \searrow 0} \underset{Q_T}{\iiint} f^\eps  \Delsv &\Big[ \psi\big( t,\eta(x, \cdot)\big)\Big] (\eps v) \d t \d x \d v \\
&=  \underset{(0,T)\times\Omega}{\iint} \rho(t,x)\DelsSR \psi(t,x) \d t \d x 
\end{aligned}
\end{equation}
where $\DelsSR$ is given in Definition \ref{def:DelsSR} and can equivalently be written as:
\begin{equation}
\DelsSR \psi (t,x ) = \Delsv \Big[ \psi\big( t,\eta(x, \cdot)\big)\Big] (0).
\end{equation}
\end{lemma}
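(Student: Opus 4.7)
The plan is to use the weak convergence $f^\eps\rightharpoonup\rho F$ in $L^\infty\big(0,T;L^2_{F^{-1}(v)}(\Omega\times\RR^d)\big)$ from Proposition \ref{prop:weakCV}, so that it suffices to prove the strong convergence
\begin{equation*}
\Delsv\big[\psi(t,\eta(x,\cdot))\big](\eps v)\longrightarrow \DelsSR\psi(t,x)\quad \text{in } L^1\big(0,T;L^2_{F(v)}(\Omega\times\RR^d)\big).
\end{equation*}
The identification $\DelsSR\psi(t,x)=\Delsv[\psi(t,\eta(x,\cdot))](0)$ claimed at the end of the statement is immediate from $\eta(x,0)=x$ and the definition \eqref{def:DelsSR}, so the problem reduces to a continuity statement for $u\mapsto\Delsv[\psi(t,\eta(x,\cdot))](u)$ at $u=0$ together with an $\eps$-uniform majorant.

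The convergence is achieved by Lebesgue dominated convergence in $(t,x,v)$. For the pointwise convergence with $(t,x,v)$ fixed and $x\in\Omega$, I would split the defining integral
\begin{equation*}
\Delsv\big[\psi(t,\eta(x,\cdot))\big](\eps v)=c_{d,s}\,\text{P.V.}\iRd\frac{\psi(t,\eta(x,\eps v))-\psi(t,\eta(x,u))}{|\eps v-u|^{d+2s}}\d u
\end{equation*}
into the contribution of a small ball around $\eps v$ (and, after the limit, around $0$) and of its complement. The outer contributions converge by pointwise continuity of $w\mapsto\eta(x,w)$ away from the set of grazing velocities, combined with a dominated-convergence argument based on the boundedness of $\psi$ and the integrable tail of $|u|^{-(d+2s)}$. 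The inner contributions are controlled by the local regularity of $w\mapsto\psi(t,\eta(x,w))$ near $w=\eps v$: a Taylor-type expansion of this composition, using its $C^{1,\alpha}$ or $H^s$ regularity established in Appendix \ref{app:FreeTransport}, makes both inner contributions vanish symmetrically in the principal-value sense and yields the claimed pointwise limit.

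For the uniform majorant, I would use the same regularity of $\psi\circ\eta$ to obtain a constant $C_\psi$ depending only on suitable norms of $\psi$ such that
\begin{equation*}
\big|\Delsv[\psi(t,\eta(x,\cdot))](u)\big|\leq C_\psi \quad\text{for all }u\in\RR^d,
\end{equation*}
uniformly in $(t,x)\in[0,T)\times\Omega$ and in $\eps\leq 1$. Since $F$ is a probability density by Proposition \ref{prop:eqLFP}, this constant provides a dominating element of $L^\infty(0,T;L^2_{F(v)}(\Omega\times\RR^d))$, and the compact time support of $\psi$ upgrades it to $L^1(0,T;L^2_{F(v)})$. Dominated convergence then concludes.

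The main obstacle is establishing the required regularity of the composition $w\mapsto\psi(t,\eta(x,w))$, and this is exactly where the geometry of $\Omega$ and the restriction to $\mathfrak{D}_T(\Omega)$ for $s\geq 1/2$ enter. When $\Omega$ is a half-space, unfolding the specular reflection across $\partial\Omega$ identifies $\eta$ with a piecewise isometric affine map and $\psi\circ\eta$ inherits the regularity of $\psi$ with no further restriction. When $\Omega$ is a ball, the map $w\mapsto\eta(x,w)$ has a singular set consisting of velocities producing grazing trajectories on $\partial\Omega$, and one cannot expect full smoothness of $\psi\circ\eta$ across this set without an additional compatibility condition at the boundary. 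For $s<1/2$ the kernel $|u|^{-(d+2s)}$ is integrable enough that mere H\"older continuity of $\psi\circ\eta$ across the singular set suffices, so no compatibility is required; for $s\geq 1/2$ the cancellation $\na_x\psi\cdot n(x)=0$ on $\partial\Omega$ imposed by $\mathfrak{D}_T(\Omega)$ is precisely what kills the jump of the derivative of $\psi\circ\eta$ at reflection points and restores the Lipschitz (or higher) control needed for both the pointwise limit and the uniform bound. All the technical regularity statements supporting this argument in the case of the ball are the ones proved in Appendix \ref{app:FreeTransport}.
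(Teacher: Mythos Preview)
Your overall strategy---pairing the weak convergence of $f^\eps$ from Proposition \ref{prop:weakCV} with strong convergence of $\Delsv[\psi(t,\eta(x,\cdot))](\eps v)$ in the dual weighted space---is exactly what the paper does, and your treatment of the pointwise convergence (splitting near and far from the singularity, Taylor-expanding in the inner region) is also in line with the paper's argument.

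The genuine gap is in your uniform majorant for the ball. You claim a constant $C_\psi$ with
\[
\big|\Delsv[\psi(t,\eta(x,\cdot))](u)\big|\leq C_\psi \quad\text{uniformly in } (t,x,u)\in[0,T)\times\Omega\times\RR^d.
\]
This is false when $\Omega$ is the ball: the second derivative $D^2_v[\psi(\eta(x,\cdot))]$ blows up like $1/L$ near the grazing set, where $L=2\sqrt{(x\cdot\bar v)^2+(1-|x|^2)}$ vanishes as $x\to\dO$ with $v$ tangential (see Lemma \ref{lemma:EtaBall}). The Neumann condition on $\psi$ kills the $1/L^2$ term but not the $1/L$ term, so even for $\psi\in\mathfrak{D}_T$ one only gets $\sup_v\|D^2_v[\psi(\eta(x,\cdot))]\|\in L^{2-\delta}(\Omega)$ for $\delta>0$, strictly short of $L^2$. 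Consequently no $L^\infty$---or even $L^2(\Omega)$---dominating function exists, and plain dominated convergence fails.

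The paper circumvents this by proving instead that $\Delsv[\psi(t,\eta(x,\cdot))]\in L^p_{F(v)}(\Omega\times\RR^d)$ for some $p>2$ (Lemma \ref{lemma:delsint}, established in Appendix \ref{app:subsecDels}), and then invoking a separate lemma: pointwise convergence together with a uniform $L^p_{F(v)}$ bound for $p>2$ implies strong convergence in $L^2_{F(v)}$. That lemma is proved via Egorov's theorem on truncated domains $\Omega\times B(R)$, controlling the remainder on the small exceptional set and on $\{|v|>R\}$ by H\"older's inequality against the $L^p$ bound. This extra step is not optional for the ball---it is precisely what replaces the unavailable $L^\infty$ majorant.
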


Before proving this lemma, which we will do separately for each $\Omega$, let us conclude that with this convergence we can take the limit in \eqref{eq:wfSReta} and see that the macroscopic density $\rho(t,x)$ satisfies
\begin{equation} \label{eq:weakFHSR}
\underset{(0,T)\times\Omega}{\iint} \rho(t,x) \Big( \pa_t \psi(t,x) - \DelsSR \psi(t,x) \Big) \d t \d x + \underset{\Omega}{\int} \rho_{in}(x)\psi(0,x) \d x = 0.
\end{equation}
for any $\psi\in \mathcal{C}^\infty_c(Q_T)$ if $s<1/2$ and any $\psi \in \mathfrak{D}_T(\Omega)$ if $s\geq 1/2$, which ends the proof of Theorem \ref{thm:mainSR}.

\subsubsection{Lemma \ref{lemma:CVSR} in a half-space} \label{subsubsec:lemmaCVSRhalfspace}
Consider the half-space $\lbrace x=(x',x_d) \in \RR^d : x_d >0 \rbrace$. We will focus on the case $s\geq 1/2$ because, as will be explained in Remark \ref{rmq:spetit}, the case $s<1/2$ can be handled by a simpler version of the same proof.\\
The function $\eta$ associated with the half-space can be written explicitly as:
\begin{equation} \label{eq:etaexplicitHS}
\eta(x,v) = \left|  \begin{aligned}  &x+v &\mbox{if } x_d+v_d \geq 0\\
													 & (x'+v' , -x_d -v_d ) &\mbox{if } x_d+v_d \leq 0
							\end{aligned} \right.										 
\end{equation} 
as illustrated by Figure \ref{fig:halfspace}. 
\begin{figure}[h]
\centering
\includegraphics[width=11cm,height=10cm]{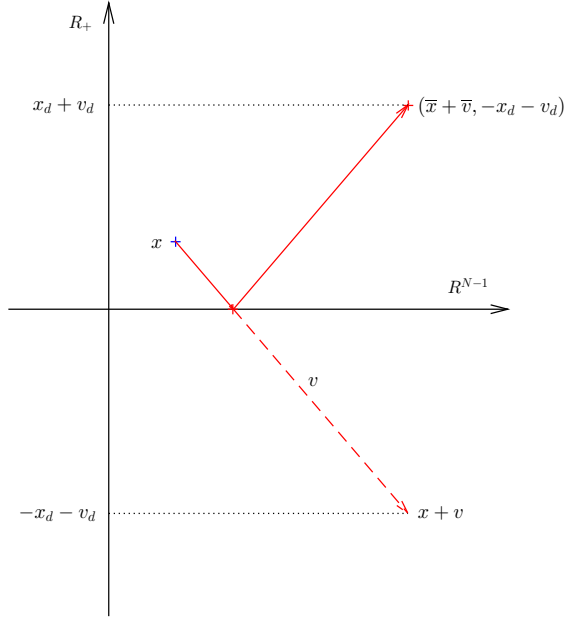}
\caption{Example of trajectory in the half-space}
\label{fig:halfspace}
\end{figure}

We can differentiate $\eta(x,v)$ to see that its Jacobian matrix reads
\begin{align} \label{eq:naetahalfspace}
\na_v \eta(x,v) = Id + \big(H(x_d+v_d)-1\big) E_{d,d} 
\end{align}
where $E_{d,d}$ is the matrix with $0$ everywhere except the last coefficient (of index $d,d$) which is $1$ and $H$ is the Heaviside function equal to $1$ if $x_d+v_d>0$ and $-1$ if $x_d+v_d<0$. Furthermore, the second derivative of $\eta(x,v)$, which we will see as an element of $\mathcal{M}_d (\RR^d)$, i.e. a vector valued matrix, reads
\begin{align*}
D^2_v \eta(x,v) = 2 \big( n \times E_{d,d}\big) \delta_{\eta(x,v)\in\dO}
\end{align*}
where $n$ is the outward unit vector of $\dO$ (which is constant in the half-space), $\delta_{\eta(x,v)\in\dO}$ is the dirac measure of the boundary surface and $\times$ is a multiplication between a vector $u\in \RR^d$ and a matrix $M= (m_{i,j})_{1\leq i,j\leq d} \in \mathcal{M}_d(\RR)$ whose result is the vector-valued matrix given by $u\times M = (  m_{i,j} u )_{1\leq i,j\leq d} \in \mathcal{M}_d(\RR^d)$. \\
A straightforward differentiation yields
\begin{align*}
D_v^2 \Big[ \psi\big(t,\eta(x,v)\big)\Big] &= \big( \na_v \eta(x,v) \big)^T D^2 \psi\big(t,\eta(x,v)\big) \big( \na_v \eta(x,v) \big) \\
&\quad + D_v^2\eta(x,v) \na \psi\big(t,\eta(x,v)\big).
\end{align*}
where for any $\psi\in \mathfrak{D}_T$ we have
\begin{align*}
D_v^2\eta(x,v) \na \psi\big(t,\eta(x,v)\big) = 2 \Big( n \cdot \na \psi\big(t,\eta(x,v)\big) \Big) E_{d,d} \delta_{\eta(x,v)\in\dO}= 0
\end{align*}
since for all $y=\eta(x,v) \in\dO$ we have $ n(y) \cdot \na \psi(t,y) = 0$. \\
To prove Lemma \ref{lemma:CVSR} we will show that $\Delsv \Big[ \psi\big( t,\eta(x, \cdot)\big)\Big] (\eps v)$ converges strongly in $L^\infty( 0,T ; L^2_{F(v)}(\Omega\times\RR^d)$ by a dominated convergence argument. Since $f^\eps$converges weakly in $L^\infty( 0,T ; L^2_{F^{-1}(v)}(\Omega\times\RR^d)$ we can then pass to the limit in the left-hand-side of \eqref{eq:CVSR} and Lemma \ref{lemma:CVSR} follows. \\
We begin by the proof of point-wise convergence. We introduce the function $\chi_x:\RR^d\times\RR^d \mapsto \RR$ given by (omitting the $t$ variable for the sake of clarity)
\begin{align} \label{eq:defchi}
\chi_x(v,w) = \psi\big(\eta(x,v+w)\big) - \psi\big(\eta(x,w)\big).
\end{align}
For any $(t,x,v) \in Q_T$ we then have
\begin{align}
 \Delsv &\Big[ \psi\big(t,\eta(x,\cdot)\big)\Big]  (\eps v) -  \DelsSR \psi(x)  \nonumber \\
 &= c_{d,s} P.V. \iRn \frac{\psi\big(t,\eta(x,\eps v)\big)-\psi\big(t,\eta(x, \eps v + w)\big)}{|w|^{N+2s}} \d w \nonumber \\
&\quad -  c_{d,s} P.V. \iRn \frac{\psi\big(t,x)-\psi\big(t,\eta(x, w)\big)}{|w|^{N+2s}} \d w \nonumber\\
 &= c_{d,s} P.V. \iRn \frac{\chi_x(\eps v, 0) - \chi_x (\eps v , w)}{|w|^{d+2s}} \d w. \label{eq:delschix}
\end{align}
For $\delta >0$, we split the integral as follow 
\begin{align*}
 c_{d,s} P.V. \iRn \frac{\chi_x(\eps v, 0) - \chi_x (\eps v , w)}{|w|^{d+2s}} \d w &=  c_{d,s} P.V. \int_{|w|\leq\delta} \frac{\chi_x(\eps v, 0) - \chi_x (\eps v , w)}{|w|^{d+2s}} \d w \\
 &\hspace{0.5cm} +  c_{d,s} \int_{|w|\geq\delta} \frac{\chi_x(\eps v, 0) - \chi_x (\eps v , w)}{|w|^{d+2s}} \d w.
\end{align*}
On the one hand we see that
\begin{align*}
\bigg| \int_{|w|\geq\delta} \frac{\chi_x(\eps v, 0) - \chi_x (\eps v , w)}{|w|^{d+2s}} \d w \bigg| &\leq 2 \lVert \chi_x(\eps v,\cdot) \lVert_{L^\infty(\RR^d)} \int_{|w|\geq \delta} \frac{1}{|w|^{d+2s}} \d w \\
& \leq 2 \delta^{-2s} \lVert \chi_x(\eps v,\cdot) \lVert_{L^\infty(\RR^d)}  
\end{align*}
and by definition of $\chi_x$ 
\begin{align*}
\sup_w | \chi_x(\eps v, w ) | = \sup_w \Big| \psi\big(\eta(x,\eps v+w)\big) - \psi\big(\eta(x,w)\big) \Big| \underset{\eps \rightarrow 0}{\longrightarrow} 0 
\end{align*}
so the integral over $|w|\geq \delta$ vanishes. On the other hand, using the symmetry of the set $\lbrace |w|\leq \delta \rbrace$ we write
\begin{align*}
&P.V. \int_{|w|\leq\delta} \frac{ \chi_x(\eps v,0) - \chi_x(\eps v,w) }{|w|^{N+2s}} \d w  \\
&\hspace{1cm} = \frac{1}{2} P.V. \int_{|w|\leq \delta} \frac{ 2\chi_x(\eps v, 0) - \chi_x(\eps v, w) - \chi_x (\eps v, -w) }{|w|^{d+2s}} \d w 
\end{align*}
where we can expand $\chi_x (\eps v, \pm w)$ using a second-order Taylor-Lagrange expansion which yields, for some $\theta$ and $\tilde{\theta}$ in the ball $B(\delta)$ centred at the origin with radius $\delta$
\begin{align*}
&2\chi_x(\eps v, 0) - \chi_x(\eps v, w) - \chi_x (\eps v, -w)  \\
&=-  \na_w \chi_x (\eps v, 0) \cdot w - w \cdot D^2 \chi_x (\eps v , \theta) w \\
&\hspace{1cm} - \na_w \chi_x (\eps v, 0) \cdot (-w) - (-w) \cdot D^2 \chi_x (\eps v , \tilde{\theta}) (-w)\\
&= - w \cdot \Big( D^2 \chi_x (\eps v , \theta) + D^2 \chi_x (\eps v , \tilde{\theta}) \Big) w 
\end{align*}
therefore
\begin{align} \label{eq:D2chismallw}
&\bigg|P.V. \int_{|w|\leq\delta} \frac{ \chi_x(\eps v,0) - \chi_x(\eps v,w) }{|w|^{N+2s}} \d w \bigg| \\
&\quad = \frac{1}{2}\bigg| \int_{|w|\leq \delta} \frac{w\big( D^2 \chi_x (\eps v , \theta) + D^2 \chi_x (\eps v , \tilde{\theta}) \big) w }{|w|^{d+2s}} \d w \bigg|
\end{align}
where the P.V. is not needed any more since $s<1$. For any fixed $\theta \in B(\delta)$, we have 
\begin{align*}
D^2 \chi_x (\eps v , \theta ) &= \big(  \na_v \eta(x,\eps v + \theta )\big)^T D^2 \psi\big(\eta(x,\eps v+ \theta)\big) \big( \na_v \eta(x,\eps v + \theta)\big) \\
&\hspace{1cm} - \big( \na_v \eta(x, \theta)\big)^T D^2 \psi\big(\eta(x,\theta )\big) \big( \na_v \eta(x,\theta)\big). 
\end{align*}
If $x + \eps v + \theta$ and $x+\theta$ are either both in $\Omega$ or both outside $\Omega$ then thanks to \eqref{eq:naetahalfspace} we know that $\na_v \eta(x,\eps v + \theta) = \na_v \eta(x, \theta)$. We denote $M$ this matrix and we have 
\begin{align*}
D^2 \chi_x (\eps v , \theta ) &= M^T \Big( D^2 \psi\big(\eta(x,\eps v+ \theta)\big) - D^2 \psi\big(\eta(x,\theta )\big) \Big) M 
\end{align*}
in which case the regularity of $\psi$ yields
\begin{align*}
 \lim_{\eps \rightarrow 0} D^2 \chi_x (\eps v , \theta )  =0 .
\end{align*}
If $x$ is in the interior of $\Omega$, then for $\eps$ and $\delta$ small enough, we will obviously have $x+\theta$ and $x+\eps v + \theta$ inside $\Omega$. Moreover, if $x$ is on the boundary $\dO$ then for any fixed $\theta$ in $B(\delta)$, when $\eps $ is small enough we will also have $x+\theta$ and $x+\eps v + \theta$ either both inside $\Omega$ if $\theta \cdot n(x) <0$ or outside $\Omega$ if $\theta \cdot n(x) \geq 0$. As a consequence, we have point-wise convergence of the integrand in the left side of \eqref{eq:D2chismallw} therefore \eqref{eq:naetahalfspace} and the regularity of $\psi$ ensure that we can use dominated convergence in $L^1(B(\delta))$ to write
\begin{align*}
&\lim_{\eps\rightarrow 0} \bigg|P.V. \int_{|w|\leq\delta} \frac{ \chi_x(\eps v,0) - \chi_x(\eps v,w) }{|w|^{N+2s}} \d w \bigg| \\
& \hspace{1cm} = \frac{1}{2}\bigg| \int_{|w|\leq \delta} \lim_{\eps \rightarrow 0} \frac{w\big( D^2 \chi_x (\eps v , \theta) + D^2 \chi_x (\eps v , \tilde{\theta}) \big) w }{|w|^{d+2s}} \d w \bigg| =0 .
\end{align*}
Now that we have proven the point-wise convergence, let us show that 
\begin{align*}
v \mapsto \Delsv \Big[ \psi\big(t,\eta(x,\cdot)\big)\Big] (\eps v)
\end{align*}
is bounded uniformly in $\eps$ by a function in $L^2_{F(v)}(\Omega\times\RR^d)$. The regularity of $\psi$ and the above computation of the jacobian matrix of $\eta$ yield in particular that for all $t\in [0,T)$
\begin{align} \label{eq:D2halfspacebound}
\underset{v\in\RR^d}{\sup} D^2_v \Big[ \psi\big(t,\eta(x,v)\big)\Big]  \in L^2 (\Omega).
\end{align}
Therefore, for any $t\in[0,T)$ we introduce $G_t(x)$ given by
\begin{align*}
G_t(x) =  \lVert \psi(t,\cdot) \lVert_{L^\infty(\Omega)} +  \Big\lVert D^2_v \Big[ \psi\big(t,\eta(x,\cdot)\big)\Big]\Big\lVert_{L^\infty(\RR^d)} .
\end{align*}
As we did before, we can split the integral expression of the fractional Laplacian into a integral on a ball of radius $\delta$ around the singularity and an integral on the complement of that ball. For the latter, we write for some constant $C>0$
\begin{align*}
&\bigg| c_{d,s} \underset{\RR^d\setminus B(\delta)}{\int} \frac{\psi\big(\eta(x,\eps v)\big) - \psi\big(\eta(x,\eps v + w)\big) }{|w|^{d+2s}} \d w \bigg| \\
&\quad \leq C \lVert \psi (t,\cdot) \lVert_{L^\infty(\Omega)} \underset{\RR^d\setminus B(\delta)}{\int} \frac{1}{|w|^{d+2s}} \d w \\
&\quad \leq C  \lVert \psi (t,\cdot) \lVert_{L^\infty(\Omega)} \delta^{-2s} .
\end{align*}
For the integral over $B(\delta)$, we use a second order Taylor-Lagrange expansion like we did for $\chi_x$ and write
\begin{align*}
&\bigg| c_{d,s} \underset{ B(\delta)}{\int} \frac{\psi\big(\eta(x,\eps v)\big) - \psi\big(\eta(x,\eps v + w)\big) }{|w|^{d+2s}} \d w \bigg| \\
&\quad \leq C \underset{B(\delta)}{\int} \frac{w \cdot\Big(  D^2 \Big[ \psi \big(\eta(x,\cdot)\big)\Big](\eps v + \theta) +  D^2 \Big[ \psi \big(\eta(x,\cdot)\big)\Big](\eps v + \tilde{\theta})\Big) w }{|w|^{d+2s}} \d w \\
&\quad \leq \Big\lVert  D^2 \Big[ \psi \big(\eta(x,\cdot)\big)\Big] \Big\lVert_{L^\infty(\RR^d)} \delta^{2-2s} .
\end{align*}
Put together we see that for $\delta=1$ we have for all $\eps >0$ and $v\in\RR^d$
\begin{align*}
\bigg| \Delsv \Big[ \psi\big(t,\eta(x,\cdot)\big)\Big] (\eps v) \bigg| \leq G_t(x)
\end{align*}
and $G_t(x)$ is in $L^2(\Omega)\subset L^2_{F(v)}(\Omega\times \RR^d)$ by the previous estimates on the second derivative. Hence, we have proven that $\Delsv \Big[ \psi\big(t,\eta(x,\cdot)\big)\Big] (\eps v)$ converges strongly in $L^\infty(0,T; L^2_{F(v)}(\Omega\times\RR^d))$ to $\DelsSR \psi(t,x)$ and Lemma \ref{lemma:CVSR} in the half-space follows. 

\subsubsection{Lemma \ref{lemma:CVSR} in a ball}

We consider, without loss of generality, that $\Omega$ is the unit ball in $\RR^d$. For $\psi$ in $\mathfrak{D}_T(\Omega)$, we will again prove Lemma \ref{lemma:CVSR} by establishing the strong convergence of $\Delsv \Big[ \psi\big( t,\eta(x, \cdot )\big)\Big] (\eps v)$ in $L^\infty(0,T; L^2_{F(v)}(\Omega\times\RR^d))$ to $\DelsSR \psi(t,x)$. \\

First, let us point out that the arguments we presented in the half-space to prove the point-wise convergence still hold in the ball. Indeed, we can introduce the function $\chi_x$ defined in \eqref{eq:defchi} and split \eqref{eq:delschix} over $|w| \leq \delta$ and $|w|\geq \delta$ for some $\delta>0$. On the one hand, if we bound the integral over $|w| \geq \delta$ by the product of the $L^\infty$-norm of $\chi_x$ in $\Omega$ and the integral of the kernel away from its singularity, it follows that this term goes to $0$ by definition of $\chi_x$ and regularity of $\psi$. On the other hand, the integral over $|w|\leq \delta$ can be handled exactly the same way as in the half-space. More precisely, if $x$ is away from the boundary then for $\delta$ and $\eps$ small enough $\eta(x,\eps v+ w) =x+\eps v + w$ and there is no issue; and if $x$ is on $\dO$ then we use the fact that locally the boundary of the ball is isomorphic to the hyperplane $\lbrace x_d = 0\rbrace$ so we recover the previous setting and a dominated convergence argument in $L^1(B(\delta))$ will show that the integral over $|w|\leq \delta$ goes to 0. Together, these two controls and \eqref{eq:delschix} prove the point-wise convergence. \\
The rest of our proof of Lemma \ref{lemma:CVSR} requires some estimates on the derivatives of $\eta$. These estimates can be established by a detailed analysis of the trajectories described by $\eta$ and we have devoted the Appendix \ref{app:FreeTransport} of this paper to this analysis. In particular, in Section \ref{app:subsecDels}, we prove the following Lemma:
\begin{lemma} \label{lemma:delsint}
For all $\psi \in \mathfrak{D}_T$ there exists $p>2$ such that 
\begin{align*}
\Delsv \Big[ \psi\big( t,\eta(x, v) \big)\Big]  \in L^p_{F(v)} (\Omega\times\RR^d).
\end{align*}
\end{lemma}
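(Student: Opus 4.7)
My plan is to split the singular integral defining $\Delsv\bigl[\psi(t,\eta(x,\cdot))\bigr](v)$ at a fixed distance from its singularity, reducing the claim to an $L^p$ bound on the second derivative of $v\mapsto\psi(t,\eta(x,v))$, which in turn reduces to an $L^q$ bound (with $q>4$) on the Jacobian $\na_v\eta$ that is the content of Appendix \ref{app:FreeTransport}. For the tail $\{|w|\geq 1\}$, a brutal estimate by $2\lVert\psi(t,\cdot)\lVert_{L^\infty(\bO)}\int_{|w|\geq 1}|w|^{-d-2s}\d w$ yields a constant independent of $(x,v)$, which is therefore in $L^p_{F(v)}(\Omega\times\RR^d)$ for every $p$ since $F$ is a probability density. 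For the near part, I would symmetrise in $w\mapsto -w$ and expand to second order around $w=0$, as in the half-space argument of Section \ref{subsubsec:lemmaCVSRhalfspace}, to obtain
\[
\text{P.V.}\!\!\int_{|w|<1}\frac{\psi(t,\eta(x,v))-\psi(t,\eta(x,v+w))}{|w|^{d+2s}}\d w = -\tfrac{1}{4}\!\!\int_{|w|<1}\frac{w\cdot\bigl(D^2_v[\psi\circ\eta(x,\cdot)](v+\theta)+D^2_v[\psi\circ\eta(x,\cdot)](v-\tilde\theta)\bigr)w}{|w|^{d+2s}}\d w,
\]
with $\theta,\tilde\theta$ in $B(|w|)$. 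Since $s<1$, the radial weight $|w|^{-d-2s+2}$ is integrable near the origin, so the near part is pointwise dominated, up to a constant, by the shift-maximal function $\sup_{|\zeta|\leq 1}\bigl|D^2_v[\psi\circ\eta(x,\cdot)](v+\zeta)\bigr|$. A Hardy–Littlewood / Minkowski argument then reduces the whole question to proving that $(x,v)\mapsto D^2_v[\psi(t,\eta(x,v))]$ belongs to $L^p_{F(v)}(\Omega\times\RR^d)$ for some $p>2$.

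By the chain rule,
\[
D^2_v\bigl[\psi(t,\eta(x,v))\bigr] = (\na_v\eta)^T\,D^2\psi(\eta)\,(\na_v\eta) + \na\psi(\eta)\cdot D^2_v\eta,
\]
and the first summand is dominated pointwise by $|\na_v\eta|^2\lVert D^2\psi\lVert_{L^\infty}$. Exactly as in the half-space calculation, the second summand is a distribution supported on the set where the trajectory reaches $\dO$, whose vector-valued part at each reflection point is parallel to the inward normal $n$; the condition $\psi\in\mathfrak{D}_T$ precisely ensures that $n\cdot\na\psi=0$ on $\dO$, so this term drops out at every reflection. The reason the argument does not close trivially, and the reason Appendix \ref{app:FreeTransport} is devoted to a careful geometric analysis of $\eta$ on the ball, is that $\na_v\eta$ is \emph{not} bounded: velocities whose trajectory becomes tangent to $\dO$ at some reflection (\emph{grazing trajectories}) accumulate arbitrarily many reflections in finite time and the Jacobian $\na_v\eta$ develops a genuine singularity, whose strength is governed by the angle of incidence at the grazing reflection.

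The main obstacle is therefore the quantitative control of this grazing singularity. The strategy I would pursue in the appendix is to study the billiard return map of \eqref{eq:cc} on $\dO$, keep track of how the inter-reflection arc-lengths $s_{i+1}-s_i$ and the tangential component of $w(s_i)$ degenerate along almost-grazing trajectories, and show that the resulting blow-up of $\na_v\eta$ is integrable to some power strictly larger than $4$, i.e.\ $\na_v\eta\in L^q_{F(v)}(\Omega\times\RR^d)$ for some $q>4$. This gives $|\na_v\eta|^2\in L^{q/2}_{F(v)}$ with $q/2>2$, which, combined with the tail bound above, yields the claimed $L^p_{F(v)}$ estimate with some $p>2$. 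This $p>2$ integrability is exactly the equi-integrability needed, via a Vitali-type argument, to upgrade the pointwise convergence established at the beginning of the proof of Lemma \ref{lemma:CVSR} to strong convergence in $L^2_{F(v)}(\Omega\times\RR^d)$, and hence to pass to the limit against the weakly convergent $f^\eps/F$.
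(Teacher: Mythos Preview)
Your overall strategy (split the integral, Taylor-expand near the singularity, reduce to an $L^p$ bound on $D^2_v[\psi\circ\eta]$) matches the paper, but your identification of the singularity is reversed. In the ball, $\na_v\eta$ is \emph{uniformly bounded}: the explicit computation in Appendix \ref{subsec:etaballder}, in particular \eqref{eq:naveta2} and \eqref{eq:mubounded}, gives $\na_v\eta = \mu_x(v)\Theta(\bv) + R_{k(\pi-2A)}$ with $-4\leq\mu_x(v)\leq 0$, the growth of the reflection count $k$ near grazing being exactly compensated by the factor $L/|v|$ in $\mu_x$. Hence your proposed task, proving $\na_v\eta\in L^q$ for some $q>4$, is trivially true and misses the point. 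The genuine singularity lies in the \emph{regular} part of $D^2_v\eta$: within each fixed $E_k$, $\na_v\eta$ is smooth but its $v$-derivative behaves like $1/L^2$ as $L=2\cos A\to 0$ (see \eqref{eq:D2explicitgrazing}). The Neumann condition does kill the Dirac layers at the interfaces $\pa E_k$, as you say, but it does \emph{not} annihilate $D^2_v\eta\cdot\na\psi$ entirely; rather, because $SR_{k(\pi-2A)}\bv\cdot\na\psi(\eta)=O(L)$ near grazing, it improves the $1/L^2$ singularity to $1/L$ (equation \eqref{eq:D2etanapsi}). The heart of the argument is then the purely geometric fact that $1/L(x,\bv)=\big((x\cdot\bv)^2+1-|x|^2\big)^{-1/2}$ belongs to $L^p(\Omega\times\mathbb S^{d-1})$ for all $p<3$, computed in \eqref{eq:Lintegrability}.

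There is also a secondary gap in your handling of the Taylor remainder. Bounding the near integral by $\sup_{|\zeta|\leq 1}\big|D^2_v[\psi\circ\eta](x,v+\zeta)\big|$ and invoking a maximal-function argument discards the crucial fact that the singular factor $1/L$ depends on $v$ only through its direction. The paper exploits this: after the Taylor expansion at $z=v+\theta w$, the singular factor is $1/L(x,\bz)$, and one bounds the $w$-integral by an angular integral over $\bz\in\mathbb S^{d-1}$ before taking the $L^p_{F(v)}$ norm. If instead you pass to the pointwise supremum over $\zeta$, you are left with $\sup_v 1/L(x,\bv)\sim 1/\sqrt{1-|x|^2}$, which is only in $L^{2-\delta}(\Omega)$ (cf.\ \eqref{eq:controlD2psietadelta}) and cannot deliver any $p>2$.
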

The strong convergence of $\Delsv \Big[ \psi\big( t,\eta(x, \cdot )\big)\Big] (\eps v)$ in $L^2_{F(v)}(\Omega\times\RR^d)$ then follows from the following result
\begin{lemma}
If $(h_\eps)_{\eps>0}$ converges point-wise to $h$ and is bounded in $L^p_{F(v)} (\Omega\times\RR^d)$ for some $p>2$ uniformly in $\eps$ then $h_\eps$ converges strongly to $h$ in $L^2_{F(v)} (\Omega\times\RR^d)$. 
\end{lemma}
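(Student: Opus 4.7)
The plan is to apply Vitali's convergence theorem: the uniform $L^p_{F(v)}$ bound with $p>2$, together with integrability of the weight $F$, upgrades pointwise convergence to convergence in $L^2_{F(v)}$ via standard equi-integrability. The whole argument is textbook interpolation once the $L^p$ estimate of Lemma \ref{lemma:delsint} is in hand.

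First, Fatou's lemma applied to $h_\eps \to h$ yields $h \in L^p_{F(v)}(\Omega\times\RR^d)$ with $\lVert h\lVert_{L^p_{F}} \leq C$, so the differences $g_\eps := h_\eps - h$ converge pointwise to zero and satisfy $\lVert g_\eps \lVert_{L^p_{F}} \leq 2C$ uniformly in $\eps$. The key estimate is then H\"older's inequality with exponents $p/2$ and $p/(p-2)$, applied through the decomposition $|g_\eps|^2 F = (|g_\eps|^p F)^{2/p} \cdot F^{(p-2)/p}$: for every measurable $A \subset \Omega\times\RR^d$,
\begin{equation*}
\int_A |g_\eps|^2 F \d v \d x \leq \bigg(\int_A |g_\eps|^p F \d v \d x \bigg)^{2/p} \bigg(\int_A F \d v \d x\bigg)^{(p-2)/p} \leq (2C)^2 \bigg(\int_A F\d v\d x\bigg)^{(p-2)/p}.
\end{equation*}

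Choosing $A = \Omega\times (\RR^d\setminus B_R)$ for large $R$, and using $\int F\d v = 1$ together with $|\Omega|<\infty$ in the ball setting where this lemma is invoked, gives the \emph{tightness}: the integral of $|g_\eps|^2 F$ outside a bounded set in $v$ can be made arbitrarily small, uniformly in $\eps$. Applied to arbitrary sets of small $F\d v\d x$-measure within $\Omega\times B_R$, the same estimate yields \emph{equi-integrability} on this finite-measure region.

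Together with the pointwise convergence $g_\eps \to 0$, Vitali's theorem then gives $\int |g_\eps|^2 F \d v \d x \to 0$, which is the claim. The only step requiring any care is tightness, which exploits that the reference measure $F(v)\d v\d x$ is finite on $\Omega\times\RR^d$ when $\Omega$ is a ball; otherwise there is no genuine obstacle, as all the analytic content has been absorbed into the improved integrability $p>2$ supplied by Lemma \ref{lemma:delsint}.
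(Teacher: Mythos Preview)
Your proof is correct and hinges on the same two analytic ingredients as the paper's: the H\"older estimate
\[
\int_A |h_\eps-h|^2 F \leq \Big(\int_A |h_\eps-h|^p F\Big)^{2/p}\Big(\int_A F\Big)^{1-2/p}
\]
to control small-measure sets, and the integrability of $F$ (together with $|\Omega|<\infty$) to control the tail in $v$. The only difference is in packaging: you invoke Vitali's convergence theorem, whereas the paper writes the argument out by hand using Egorov's theorem on the bounded set $\Omega\times B(R)$, splitting the $L^2_F$ norm into the Egorov set $A_\delta$ (uniform convergence), its complement in $\Omega\times B(R)$ (small $F$-measure, handled by the H\"older bound above), and the tail $\Omega\times(\RR^d\setminus B(R))$ (handled by the same H\"older bound and the decay $F(v)\sim |v|^{-d-2s}$). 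Your Vitali formulation is cleaner and avoids the explicit three-way split; the paper's version has the minor advantage of being self-contained and making the quantitative dependence on $\delta$ and $R$ visible.
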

\begin{proof}
Consider $R>0$ and the ball $B(R)$ of radius $R$ centred at $0$ in $\RR^d$. The Egorov theorem states that, since $\Omega\times B(R)$ is a bounded domain, for any $\delta>0$ one can find a subset $A_\delta \subset \Omega\times B(R)$ such that $| \lbrace \Omega\times B(R) \rbrace\setminus A_\delta | \leq \delta$ and $h_\eps$ converges uniformly on $A_\delta$ which means in particular
\begin{align*}
\int_{A_\delta} | h_\eps - h |^2 F(v) \d x \d v \rightarrow 0.
\end{align*}
As a consequence, we split the norm as follows
\begin{align*}
&\underset{\Omega\times\RR^d}{\iint} |h_\eps - h|^2 F(v) \d x \d v = \underset{A_\delta}{\iint}  |h_\eps - h|^2 F(v) \d x\d v \\
&\quad + \underset{\lbrace\Omega\times B(R) \rbrace\setminus A_\delta}{\iint}  |h_\eps - h|^2 F(v) \d x \d v + \underset{\Omega\times\lbrace\RR^d\setminus B(R)\rbrace}{\iint}  |h_\eps - h|^2 F(v) \d x \d v.
\end{align*}
 The first term is handled by Egorov's theorem. For the second, we write using the boundedness in $L^p_{F(v)}(\Omega\times\RR^d)$
\begin{align*}
&\bigg| \underset{\lbrace\Omega\times B(R) \rbrace \setminus A_\delta}{\iint}  |h_\eps - h|^2 F(v) \d x \d v \bigg| \\
& \leq \bigg( \underset{\lbrace\Omega\times B(R) \rbrace \setminus A_\delta}{\iint}  |h_\eps - h|^p F(v) \d x \d v \bigg)^{2/p} \bigg( \underset{\lbrace\Omega\times B(R) \rbrace \setminus A_\delta}{\iint} F(v) \d x \d v \bigg)^{1-2/p} \\
&\leq C |\lbrace\Omega\times B(R) \rbrace \setminus A_\delta|^{1-2/p} \\
&\leq C \delta^{1-2/p}
\end{align*}
and for the third, using Proposition \ref{prop:eqLFP}
\begin{align*}
&\bigg| \underset{\Omega\times\lbrace\RR^d\setminus B(R)\rbrace}{\iint}  |h_\eps - h|^2 F(v) \d x \d v \bigg| \\
& \hspace{1cm} \leq \bigg( \underset{\Omega\times\lbrace\RR^d\setminus B(R)\rbrace}{\iint}  |h_\eps - h|^p F(v) \d x\d v \bigg)^{2/p} \bigg( \underset{\Omega\times\lbrace\RR^d\setminus B(R)\rbrace}{\iint} F(v) \d x \d v \bigg)^{1-2/p} \\
&\hspace{1cm} \leq C \Big( \frac{1}{R^{2s}}\Big)^{1-2/p} .
\end{align*}
Hence, for any $\tilde{\delta}>0$ we can find $R$ such that $R^{-2s(1-2/p)} \leq \tilde{\delta} /3$, $\delta$ such that $\delta^{1-2/p}\leq \tilde{\delta}/3$ and $\eps_0$ such that for all $\eps \leq \eps_0$
\begin{align*}
\int_{A_\delta} | h_\eps - h |^2 F(v) \d x\d v \leq \frac{\tilde{\delta}}{3}
\end{align*}
and the lemma follows.
\end{proof}

\begin{rmq} \label{rmq:spetit}
In both the half-space and the ball, when $s<1/2$, we do not need to assume that $\na \psi (x) \cdot n(x) = 0$ for all $x$ on the boundary which means we can actually extend the set of test functions to $\psi\in \mathcal{C}^\infty ([0,T)\times \bar{\Omega} )$ with $\psi(T,\cdot) =0$. Indeed, in those cases, $\eta$ is regular enough to ensure that $\psi\big( t, \eta(x,v)\big)$ is in $H^{1}(\RR^d)$ with respect to the velocity and since $H^{2s}(\RR^d) \subset H^1(\RR^d)$, the fractional Laplacian of order $s$ of $\psi\big( t, \eta(x,v)\big)$ will be in $L^2_{F(v)}(\Omega\times\RR^d)$. Moreover, in our proof of point-wise convergence above, if $2s<1$ then we can control the singularity for small $w$ in \eqref{eq:delschix} with a first-order Taylor Lagrange expansion which mean we do not require any assumption on $\na \psi$ at the boundary. 
\end{rmq}

\section{Well posedness of the specular diffusion equation} \label{sec:wellposedness}

This last section is devoted to the proof of Theorem \ref{thm:wellposedness} and is divided in three steps. First, we establish some properties of the specular diffusion operator $\DelsSR$. Secondly, we handle the first part of Theorem \ref{thm:wellposedness} which is the existence and uniqueness of a weak solution to the specular diffusion equation \eqref{eq:FracHeatSR}-\eqref{eq:FracHeatSRit}. Thirdly, we will show that the distributional solution $\rho$ that we constructed in the previous section is precisely this unique weak solution when $\Omega$ is either the half-space $\RR^d_+ = \lbrace (\bar{x},x_d)\in\RR^d : x_d>0\rbrace$ or the unit ball $B_1$ in $\RR^d$. \\
Note that although the theorem holds in both domains and the steps are similar in both cases, the techniques we use at each step often differ so we will have to treat the cases separately several times. 

\subsection{Properties and estimates of the specular diffusion operator}

\subsubsection{$\DelsSR$ on the half-space}

When $\Omega$ is the half-space $\RR^d_+$, $\DelsSR$ can be written as a kernel operator using the notations of Section \ref{subsubsec:lemmaCVSRhalfspace}
\begin{prop}
Let us define $K_{\RR^d_+}$ as
\begin{align} \label{eq:Khalfspace}
& K_{\RR^d_+} (x,y) = c_{d,s} \bigg(\frac{1}{|x-y|^{d+2s}} + \frac{1}{|(\bar{x}-\bar{y}, x_d+y_d)|^{d+2s}}\bigg)
\end{align}
Then we have
\begin{equation} \tag{\ref{def:kernelDelSR}}
\DelsSR \psi (x) = P.V. \underset{\RR^d_+}{\int} \big( \psi(x) - \psi(y) \big) K_{\RR^d_+}(x,y) \d y.
\end{equation}
Moreover, this kernel is symmetric: $K_{\RR_+^d}(x,y) = K_{\RR_+^d}(y,x)$ for all $x$ and $y$ in $\RR^d_+$ and satisfies 
\begin{equation} \label{eq:estimateRRd+}
c_{d,s} \frac{1}{|x-y|^{d+2s}} \leq K_{\RR_+^d}(x,y) \leq c_{d,s} \frac{2}{|x-y|^{d+2s}}
\end{equation}
\end{prop}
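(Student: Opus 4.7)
The plan is a direct computation: split the integral defining $\DelsSR \psi(x)$ according to the two branches of $\eta$ recorded in \eqref{eq:etaexplicitHS}, change variables in each piece, and read off the kernel. Then symmetry and the two-sided bound follow by elementary inequalities.

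\medskip

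First I would split \eqref{def:DelsSR}, using the explicit formula for $\eta$ in the half-space, as
\begin{equation*}
\DelsSR \psi(x) = c_{d,s}\,\text{P.V.}\!\!\int_{\{w:\,x_d+w_d\geq 0\}}\!\! \frac{\psi(x)-\psi(x+w)}{|w|^{d+2s}}\,dw + c_{d,s}\!\!\int_{\{w:\,x_d+w_d<0\}}\!\!\frac{\psi(x)-\psi(\bar x+\bar w,-x_d-w_d)}{|w|^{d+2s}}\,dw.
\end{equation*}
The principal value is only needed in the first integral, since in the second one the image $\eta(x,w)$ stays away from $x$ (the reflected endpoint has strictly positive $d$-th coordinate whereas $x_d+w_d<0$). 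In the first integral I change variables $y=x+w$, so $w=y-x$, $|w|=|y-x|$ with unit Jacobian, and the domain becomes $y\in\RR^d_+$. In the second integral I change variables $y=(\bar x+\bar w,-x_d-w_d)$, which gives $\bar w=\bar y-\bar x$, $w_d=-x_d-y_d$, again $y\in\RR^d_+$ and unit Jacobian, while
\begin{equation*}
|w|^2 = |\bar y-\bar x|^2 + (x_d+y_d)^2 = |(\bar x-\bar y,\, x_d+y_d)|^2.
\end{equation*}
Adding the two contributions yields \eqref{def:kernelDelSR} with $K_{\RR^d_+}$ as in \eqref{eq:Khalfspace}.

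\medskip

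Symmetry $K_{\RR^d_+}(x,y)=K_{\RR^d_+}(y,x)$ is then immediate: the first summand is $|x-y|^{-d-2s}$, and for the second summand one observes $|(\bar x-\bar y,x_d+y_d)|=|(\bar y-\bar x, y_d+x_d)|$.

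\medskip

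For the bound \eqref{eq:estimateRRd+}, the lower inequality is trivial since the second summand is positive. For the upper inequality, since $x,y\in\RR^d_+$ one has $x_d,y_d\geq 0$, hence
\begin{equation*}
(x_d+y_d)^2 - (x_d-y_d)^2 = 4 x_d y_d \geq 0,
\end{equation*}
so $|(\bar x-\bar y,x_d+y_d)|^2\geq |\bar x-\bar y|^2+(x_d-y_d)^2=|x-y|^2$. Therefore $|(\bar x-\bar y,x_d+y_d)|^{-d-2s}\leq |x-y|^{-d-2s}$, which gives the factor $2$ on the right of \eqref{eq:estimateRRd+}.

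\medskip

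There is no real obstacle here: the whole proposition is a bookkeeping exercise once the two-branch structure of $\eta$ in the half-space is used. The only minor point deserving care is to justify that the principal value is needed only near the diagonal $y=x$ and not near the reflected point $(\bar x,-x_d)$; this is handled by the fact that the second change of variables sends the singular locus $w=0$ of the first integral to the boundary $y_d=0$, where the kernel of the second summand is finite whenever $x_d>0$.
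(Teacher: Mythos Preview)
Your proof is correct and follows essentially the same approach as the paper: split according to the two branches of $\eta$ in \eqref{eq:etaexplicitHS}, change variables to $y$, then verify symmetry and the two-sided bound by the elementary inequality $(x_d+y_d)^2\geq (x_d-y_d)^2$. The paper's own proof is terser (it says the kernel formula ``follows immediately'' and invokes a ``basic geometry argument'' for the upper bound), but the content is identical.
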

\begin{proof}
The expression for $\eta(x,v)$ in the half-space is given in \eqref{eq:etaexplicitHS} and \eqref{def:kernelDelSR} follows immediately from it. $K_{\RR^d_+}$ is obviously well defined, although singular, and moreover we have:
\begin{align*}
K_{\RR^d_+} (x,y) &= c_{d,s} \bigg(\frac{1}{|x-y|^{d+2s}} + \frac{1}{|(\bar{x}-\bar{y}, x_d+y_d)|^{d+2s}}\bigg) \\
&= c_{d,s} \bigg( \frac{1}{|y-x|^{d+2s}} + \frac{1}{|(\bar{y}-\bar{x}, y_d+x_d)|^{d+2s}} \bigg) = K_{\RR^d_+} (y,x) .
\end{align*}
Finally, since $1/|(\bar{y}-\bar{x}, y_d+x_d)|^{d+2s} \geq 0$, the left-hand-side of \eqref{eq:estimateRRd+} holds and by a basic geometry argument, illustrated for instance in Figure \ref{fig:halfspace}, for any $x,y$ in $\RR^d_+$: $|(\bar{x}-\bar{y}, x_d+y_d)| \geq |x-y|$ which yields the right-hand-side of \eqref{eq:estimateRRd+}. 
\end{proof}
In more general domains $\Omega$, we can also try to write $\DelsSR$ as a kernel operator. The general form of this kernel is given by a generalized change of variable formula, c.f. \cite{LukesMaly} and reads
\begin{equation} \label{eq:genkernel}
K_\Omega (x,y) = c_{d,s} \sum_{ v \in \eta_x^{-1}(y)} \frac{\big|\det \na_v \eta(x,v) \big|^{-1}}{|v|^{d+2s}} 
\end{equation}
where $\eta_x^{-1}(y) = \lbrace v\in\RR^d : \eta(x,v) = y\rbrace$. For instance, when $\Omega$ is a stripe and a cube, one can show that the Jacobian determinant of $\eta$ in those domains is bounded away from $0$, that the sum is infinite but countable and as a consequence that the kernel will be well defined, symmetric and its singularity will be comparable with the singularity of $\Dels$ as expressed in \eqref{eq:estimateRRd+} for the half-space. Although we won't dwell on those domains in this paper, we will make sure not to use the explicit expression of the kernel in the half-space when ever possible in order to establish results that will also hold in any domains where the kernel is well defined, symmetric and $2s$-singular. In particular, we can establish an integration by parts formula for $\DelsSR$ from which we will deduce its symmetry.
\begin{prop}
The operator $\DelsSR$ satisfies an integration by parts formula: for any $\psi$ and $\phi$ smooth enough:
\begin{align}\label{eq:IPPDelSR}
&\underset{\Omega}{\int} \phi(x) \DelsSR \psi(x) \d x = \frac{1}{2} \underset{\Omega\times\Omega}{\iint} \big(\phi(x) - \phi(y) \big)\big( \psi(x) - \psi(y) \big) K_{\Omega} (x,y) \d x \d y.  
\end{align}
\end{prop}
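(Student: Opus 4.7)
The plan is a standard symmetrization argument based on the kernel representation of $\DelsSR$ together with the kernel's symmetry $K_\Omega(x,y)=K_\Omega(y,x)$. In the half-space case the kernel is explicit and was just proven symmetric in \eqref{eq:Khalfspace}; in more general domains (such as the ball) one would use the change-of-variables expression \eqref{eq:genkernel}, whose symmetry follows from the fact that if $\eta(x,v)=y$ then there is a corresponding $v'$ (of the same length, obtained by reversing the billiard trajectory) with $\eta(y,v')=x$ and $|\det\nabla_v\eta(x,v)|=|\det\nabla_{v'}\eta(y,v')|$.

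Starting from
\begin{equation*}
\underset{\Omega}{\int}\phi(x)\,\DelsSR\psi(x)\,\d x
= \mathrm{P.V.}\underset{\Omega\times\Omega}{\iint} \phi(x)\bigl(\psi(x)-\psi(y)\bigr) K_\Omega(x,y)\,\d y\,\d x,
\end{equation*}
I would relabel $x\leftrightarrow y$ in a second copy of the same expression and use $K_\Omega(x,y)=K_\Omega(y,x)$ to obtain
\begin{equation*}
\underset{\Omega}{\int}\phi(x)\,\DelsSR\psi(x)\,\d x
= -\mathrm{P.V.}\underset{\Omega\times\Omega}{\iint} \phi(y)\bigl(\psi(x)-\psi(y)\bigr) K_\Omega(x,y)\,\d x\,\d y.
\end{equation*}
Averaging the two expressions yields the claimed identity. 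The same manipulation works in the ball provided the kernel representation \eqref{eq:genkernel} is symmetric and singular of order $2s$.

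The main technical point is to justify passing from the P.V.\ formulation to an absolutely convergent double integral so that Fubini and the relabeling $x\leftrightarrow y$ are legitimate. For this I would use the $2s$-singular bound on $K_\Omega$ (the upper estimate in \eqref{eq:estimateRRd+} for the half-space, and the analogous local estimate for the ball established via the regularity of $\eta$ in Appendix~\ref{app:FreeTransport}) together with a second-order Taylor expansion of the symmetrized increment
\begin{equation*}
\bigl(\phi(x)-\phi(y)\bigr)\bigl(\psi(x)-\psi(y)\bigr) = O(|x-y|^2)
\end{equation*}
for $\phi,\psi\in C^\infty(\bO)$. The integrand of the symmetric double integral is therefore dominated near the diagonal by $C\,|x-y|^{2-d-2s}$, which is integrable since $s<1$. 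Away from the diagonal, the kernel decays fast enough (and $\phi,\psi$ are bounded on $\bO$ or decay at infinity in the half-space) so that integrability is immediate.

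The delicate step in the ball case is the global integrability of the symmetrized integrand over $\Omega\times\Omega$, because $\eta$ may produce trajectories of arbitrarily many reflections for some initial data; this is where I would invoke the controls on $\eta$ proved in Appendix~\ref{app:FreeTransport} (the same ones used to prove Lemma~\ref{lemma:delsint}) to show that the contribution of long trajectories decays integrably. Once this is in hand, the P.V.\ can be removed after symmetrization, Fubini applies, and the relabeling argument yields \eqref{eq:IPPDelSR}.
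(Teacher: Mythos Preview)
Your approach is the same as the paper's---symmetrization via the kernel symmetry $K_\Omega(x,y)=K_\Omega(y,x)$---but the paper handles the principal-value issue differently, and more carefully. As written, your argument is circular: you swap $x\leftrightarrow y$ first, then justify the swap by appealing to the absolute integrability of the \emph{symmetrized} integrand. But for $s\geq 1/2$ the unsymmetrized integrand $\phi(x)(\psi(x)-\psi(y))K_\Omega(x,y)$ is only $O(|x-y|^{1-d-2s})$ near the diagonal and is not in $L^1(\Omega\times\Omega)$, so Fubini does not apply directly. You would need to insert an explicit truncation $K_\Omega\mathds 1_{|x-y|>\eps}$, swap, average, and only then pass to the limit using the absolute convergence of the right-hand side; you do not spell this out. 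The paper instead subtracts the first-order correction $(x-y)\cdot\nabla\psi(x)\mathds 1_{B(x)}(y)$ (resp.\ $(x-y)\cdot\nabla\psi(y)\mathds 1_{B(y)}(x)$) inside each copy \emph{before} swapping, which makes both integrals absolutely convergent so that Fubini is legitimate; the leftover first-order pieces survive only as separate P.V.\ integrals and cancel when the two copies are recombined.

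One further remark on scope: this proposition sits in the half-space subsection and is stated for domains where the kernel representation $K_\Omega$ is well defined and symmetric. Your discussion of the ball is misplaced here---in the ball the Jacobian of $\eta$ vanishes on a set of measure zero and the paper does \emph{not} prove \eqref{eq:IPPDelSR} there; instead it proves a separate IPP formula \eqref{eqball:ipp} written in $(x,v)$ variables, using the measure-preserving change of variables $F(x,v)=(\eta(x,v),-[\nabla_v\eta(x,v)]v)$ of Lemma~\ref{lem:changeofvariable} rather than kernel symmetry.
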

\begin{proof}
First, we use the kernel operator expression \eqref{def:kernelDelSR} for the $\DelsSR$ operator and inverse the variables $x$ and $y$, using the symmetry of the kernel $K_\Omega$, in order to write the following:
\begin{align*}
\underset{\Omega}{\int} \phi(x) \DelsSR\psi(x) \d x &= \frac{1}{2} \underset{x\in\Omega}{\int} \phi(x) P.V.  \underset{y\in\Omega}{\int} \big( \psi(x) - \psi(y) \big) K_{\Omega}(x,y) \d y \d x \\
&\quad - \frac{1}{2}  \underset{y\in\Omega}{\int} \phi(y) P.V.  \underset{x\in\Omega}{\int} \big( \psi(x) - \psi(y) \big) K_{\Omega}(x,y) \d y \d x.
\end{align*}
In first integral, we add and subtract $(x-y)\na\psi(x) \mathds{1}_{B(x)}(y)$ where $\mathds{1}_{B(x)}(y)$ is the indicator function of a ball around $x$ included in $\Omega$, and we notice that since $\psi$ is smooth it satisfies for any $x\in\Omega$ and $y\in B(x)$:
\begin{align*}
\psi(x) - \psi(y) - (x-y)\na\psi(x) \mathds{1}_{B(x)}(y) = O\big( |x-y|^2\big)
\end{align*}
so that the integral
\begin{align*}
\underset{\Omega\times\Omega}{\iint} \phi(x) \Big( \psi(x) - \psi(y) - (x-y) \na\psi(x) \mathds{1}_{B(x)}(y) \Big) K_{\Omega}(x,y) \d x \d y
\end{align*}
is well defined without need of a principal value because the kernel is $2s$-singular with $2s<2$. We do the same in the second integral, adding and subtracting  $(x-y)\na\psi(y) \mathds{1}_{B(y)}(x)$ where $\mathds{1}_{B(y)}(x)$ is the indicator function of a ball around $y$ included in $\Omega$ so that we get:
\begin{align*}
&\underset{\Omega}{\int} \phi(x) \DelsSR\psi(x) \d x \\
&\hspace{1cm}= \frac{1}{2} \underset{\Omega\times\Omega}{\iint} \phi(x) \Big( \psi(x) - \psi(y) - (x-y) \na\psi(x) \mathds{1}_{B(x)}(y) \Big) K_{\Omega}(x,y) \d x \d y \\
&\hspace{1cm} + \frac{1}{2} \underset{x\in\Omega}{\int} \phi(x) \na\psi(x) P.V. \underset{y\in\Omega}{\int} (x-y) \mathds{1}_{B(x)}(y) K_{\Omega} (x,y) \d y \d x  \\
&\hspace{1cm} - \frac{1}{2} \underset{\Omega\times\Omega}{\iint} \phi(y) \Big( \psi(x) - \psi(y) - (x-y) \na\psi(y) \mathds{1}_{B(y)}(x) \Big) K_{\Omega}(x,y) \d x \d y  \\
&\hspace{1cm}- \frac{1}{2} \underset{y\in\Omega}{\int} \phi(y) \na\psi(y) P.V. \underset{x\in\Omega}{\int} (x-y) \mathds{1}_{B(y)}(x) K_{\Omega} (x,y) \d y \d x .
\end{align*}
Since we can use Fubini's theorem in the first and the third term, we sum both of them and notice that $\big(\phi(x) - \phi(y) \big) \big(\psi(x)-\psi(y) \big)  = O\big( |x-y|^2\big)$ in order to write  

\begin{align*}
&\frac{1}{2} \underset{\Omega\times\Omega}{\iint} \phi(x) \Big( \psi(x) - \psi(y) - (x-y) \na\psi(x) \mathds{1}_{B(x)}(y) \Big) K_{\Omega}(x,y) \d x \d y \\
&\hspace{1cm} - \frac{1}{2} \underset{\Omega\times\Omega}{\iint} \phi(y) \Big( \psi(x) - \psi(y) - (x-y) \na\psi(y) \mathds{1}_{B(y)}(x) \Big) K_{\Omega}(x,y) \d x \d y \\
&= \frac{1}{2}\underset{\Omega\times\Omega}{\iint} \bigg[ \big(\phi(x) - \phi(y) \big) \big(\psi(x)-\psi(y) \big) - \phi(x) \na\psi(x) \mathds{1}_{B(x)}(y) (x-y) \\
&\hspace{1cm} + \phi(y) \na\psi(y) \mathds{1}_{B(y)}(x) (x-y) \bigg] K_{\Omega}(x,y) \d x \d y \\
&= \frac{1}{2}\underset{\Omega\times\Omega}{\iint} \big(\phi(x) - \phi(y) \big) \big(\psi(x)-\psi(y) \big) K_{\Omega}(x,y)\d x\d y \\
&\hspace{1cm} -\frac{1}{2} \underset{x\in\Omega}{\int}  \phi(x) \na\psi(x) P.V. \underset{y\in\Omega}{\int} (x-y) K_{\Omega}(x,y)  \mathds{1}_{B(x)}(y)\d y \d x \\
&\hspace{1cm} + \frac{1}{2} \underset{y\in\Omega}{\int}  \phi(y) \na\psi(y) P.V. \underset{x\in\Omega}{\int}  (x-y) \mathds{1}_{B(y)}(x) K_{\Omega}(x,y) \d x \d y 
\end{align*}
which concludes the proof. 
\end{proof}
As a direct corollary of this proof, we see that since the kernel $K_{\Omega}$ is symmetric, the operator is symmetric as well:
\begin{align*}
\underset{\Omega}{\int} \phi(x) \DelsSR \psi(x) \d x = \underset{\Omega}{\int} \psi(x) \DelsSR \phi(x) \d x.
\end{align*}

\subsubsection{$\DelsSR$ on a ball}

In the ball, if we wanted to write $\DelsSR$ as a kernel operator using \eqref{eq:genkernel}, the kernel would only be defined almost everywhere because the determinant of $\na_v \eta$ is not bounded away from $0$. Indeed -- see Appendix \ref{app:FreeTransport} for proof -- for a fixed $x$, a fixed direction $\theta = v/|v| \in\mathbb{S}^{d-1}$ and a fixed number of reflections, we can find one and only one norm $|v|$ such that the determinant of $\na_x \eta(x,|v|\theta)$ is null. This can be seen in the expression \eqref{eq:naveta} because finding this norm is equivalent to solving $\det \na_v \eta(x,v) =0$ after fixing all the variables except $l_{end}$ and, in that setting, the Jacobian determinant is a monotonous function of $l_{end}$ that passes through $0$. However, for each fixed $x$, the set of velocities $v$ such that the determinant is null is a countable sum of curves since for each fixed number of reflections $k$ there is exactly one $v$ in that set per direction $\theta$ in $\mathbb{S}^{d-1}$. Therefore, the kernel is defined almost everywhere.\\
Nevertheless, even if we can't rigorously write it with a kernel, the specular diffusion operator still has interesting properties, as for instance:
\begin{prop}
When $\Omega$ is a ball $B$, the operator $\DelsSR$ admits the following integration by parts formula: for all $\phi$ and $\psi$ smooth enough
\begin{align} \label{eqball:ipp}
&\underset{\Omega}{\int} \phi(x) \DelsSR \psi(x) \d x \\
&\quad = \frac{1}{2} c_{d,s}\underset{\Omega\times\RR^d}{\iint} \Big( \phi(x)-\phi\big(\eta(x,v)\big)\Big)\Big( \psi(x)-\psi(\eta(x,v)\big)\Big) \frac{\d v \d x}{|v|^{d+2s}} .
\end{align}
\end{prop}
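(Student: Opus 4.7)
The plan is to exploit the Hamiltonian/time-reversible structure of the billiard flow that underlies $\eta$ in order to symmetrize the integral, in the same spirit as the half-space proof uses the symmetry of the explicit kernel $K_{\RR^d_+}$, but without needing a closed-form kernel.

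First, I would reinterpret $\eta$ dynamically. Working in the $\tau$-parametrization introduced in Section~\ref{sec:AuxProbSR}, the trajectory $x(\tau)$ satisfies $\dot x(\tau) = w(\tau)$ with $w$ piecewise constant (of constant modulus $|v|$) and reflected at each boundary hit, and $\eta(x,v)=x(\tau=1)$. In other words, $\eta(x,v)$ is simply the position of the time-$1$ billiard flow on $\Omega$ starting from $(x,v)$. Let $v_{\rm end}(x,v)$ denote the corresponding velocity at $\tau=1$ and define
\begin{equation*}
T:\Omega\times\RR^d \longrightarrow \Omega\times\RR^d,\qquad T(x,v)=\bigl(\eta(x,v),-v_{\rm end}(x,v)\bigr).
\end{equation*}

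Next I would establish the two key properties of $T$. (i) \textbf{Involution}: the billiard dynamics is invariant under time-reversal combined with $v\mapsto -v$, since specular reflection is itself an involution in $v$ commuting with this symmetry. Running the trajectory from $(\eta(x,v),-v_{\rm end})$ for time $1$ therefore lands at $(x,-v)$, i.e.\ $T\circ T=\mathrm{Id}$ wherever defined. (ii) \textbf{Measure preservation for $|v|^{-d-2s}\,dx\,dv$}: the billiard flow, being the Hamiltonian flow of $H=|v|^2/2$ interrupted by orthogonal reflections (which preserve $dv$), is a Liouville-measure-preserving transformation of $\Omega\times\RR^d$; the map $v\mapsto -v$ trivially preserves $dx\,dv$; and $|v_{\rm end}|=|v|$ because specular reflection is isometric. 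Composing these observations gives that $T$ preserves the measure $d\mu(x,v):=|v|^{-d-2s}\,dx\,dv$. In the ball this is legitimate off a set of measure zero: strong convexity rules out accumulation of reflection times (as invoked in the construction of $\eta$) and grazing trajectories form a measure-zero set, as do the $(x,v)$ for which $\eta(x,v)\in\pa\Omega$.

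With $T$ at hand, the computation is short. The change of variables $(x,v)\mapsto T(x,v)=(y,u)$, together with $T\circ T = \mathrm{Id}$ which gives $\eta(T(x,v))=x$, yields
\begin{equation*}
\iint_{\Omega\times\RR^d}\phi(x)\,\psi(\eta(x,v))\,d\mu(x,v)=\iint_{\Omega\times\RR^d}\phi(\eta(x,v))\,\psi(x)\,d\mu(x,v).
\end{equation*}
Expanding the symmetric bilinear form and using this identity twice gives
\begin{equation*}
\iint \bigl(\phi(x)-\phi(\eta(x,v))\bigr)\bigl(\psi(x)-\psi(\eta(x,v))\bigr)\,d\mu = 2\iint \phi(x)\bigl(\psi(x)-\psi(\eta(x,v))\bigr)\,d\mu,
\end{equation*}
and multiplying by $c_{d,s}/2$ exactly produces the right-hand side of \eqref{eqball:ipp} from the defining formula \eqref{def:DelsSR} for $\DelsSR\psi$. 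The principal value on the left can be dropped on the symmetrized right because for smooth $\phi,\psi$ we have $\phi(x)-\phi(\eta(x,v))=O(|v|)$ and similarly for $\psi$, so the integrand is $O(|v|^{2-d-2s})$ near $v=0$ and is locally integrable for $s<1$.

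The main obstacle is the rigorous justification of the Liouville/change-of-variables step for the billiard flow, i.e.\ the fact that $T$ is measure-preserving. One has to exclude the measure-zero set of trajectories that graze the sphere or whose image under $\eta$ lies on $\pa\Omega$, and confirm that on the complement the flow is a genuine $C^1$-diffeomorphism with unit Jacobian. This is the classical (smooth) billiard setup, and the fine regularity of $\eta$ needed to make this pointwise is precisely what is developed in Appendix~\ref{app:FreeTransport}; the explicit Jacobian formula \eqref{eq:naveta} there can also be used to verify $|\det \na_v\eta|\cdot|\det \na_v\eta\circ T|=1$ directly as an alternative sanity check.
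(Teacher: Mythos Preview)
Your approach is essentially the paper's: the time-reversal map $T(x,v)=(\eta(x,v),-v_{\rm end})$ is exactly the change of variables $F$ of Lemma~\ref{lem:changeofvariable}, and the symmetrization argument is the same. The only differences are (i) the paper proves $\det\nabla F=1$ by the explicit Jacobian computation in Appendix~\ref{subsec:etaballcov} rather than by appealing to Liouville's theorem for billiards, and (ii) the paper organizes the algebra as $\phi(x)(\psi(x)-\psi(\eta))=(\phi(x)-\phi(\eta))(\psi(x)-\psi(\eta))+\phi(\eta)(\psi(x)-\psi(\eta))$ and applies $F$ only to the last (well-defined P.V.) term, whereas your intermediate swap identity $\iint\phi(x)\psi(\eta)\,d\mu=\iint\phi(\eta)\psi(x)\,d\mu$ involves individually divergent integrals; this is harmless if you first restrict to $|v|>\epsilon$ and pass to the limit, but you should say so.
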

From which we readily deduce its symmetry
\begin{align} \label{eqball:sym}
\underset{\Omega}{\int} \phi(x) \DelsSR \psi(x) \d x = \underset{\Omega}{\int} \psi(x) \DelsSR \phi(x) \d x 
\end{align}
\begin{proof}
We write
\begin{align}
&\underset{\Omega}{\int}  \phi(x) \DelsSR \psi(x) \d x \\
&\quad=  c_{d,s}\underset{\Omega\times\RR^d}{\iint} \Big( \phi(x)-\phi\big(\eta(x,v)\big)\Big)\Big( \psi(x)-\psi(\eta(x,v)\big)\Big) \frac{\d v \d x}{|v|^{d+2s}}  \nonumber \\
&\quad -  c_{d,s} P.V.\underset{\Omega\times\lbrace\RR^d}{\iint} \phi\big(\eta(x,v)\big)\Big( \psi(x)-\psi\big(\eta(x,v)\big) \Big) \frac{\d v \d x}{|v|^{d+2s}} \label{eq:ippPV}.
\end{align}
In the second term on the right-hand-side we want to do a change of variable $F(x,v) = (y,w)$ such that the trajectory described by $\eta$ from $(y,w)$ is exactly the trajectory from $(x,v)$ backwards. In particular, that means $\eta(y,w)=x$ and $\eta(x,v)=y$. We have the following result on this change of variable which will be proven in Section \ref{subsec:etaballcov} of the appendices:
\begin{lemma} \label{lem:changeofvariable}
The change for variable $F$ given by 
\begin{align}
F \begin{pmatrix} x \\ v \end{pmatrix} = \begin{pmatrix} \eta(x,v) \\ - \big[\na_v \eta(x,v)\big] v \end{pmatrix}
\end{align}
is precisely the change of variable such that $\eta ( F(x,v) ) = x$ and the trajectory described by $\eta$ starting at $\eta(x,v)$ with velocity $- \big[\na_v \eta(x,v)\big] v$ is exactly the trajectory from $(x,v)$ backwards. Moreover, for all $(x,v)$:
\begin{equation}
\det \na F(x,v) = 1.
\end{equation}
\end{lemma}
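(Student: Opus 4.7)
I would identify $F$ with the composition of the Hamiltonian billiard flow at reparametrised time $\tau=1$ and the velocity-reversal. Writing $\Phi_\tau(x,v)$ for the state of the characteristic system \eqref{eq:cc} expressed in the reparametrised coordinates $(x,w,\tau)$, and $T(y,w)=(y,-w)$ for the velocity flip, the point of the lemma is precisely that $F = T\circ\Phi_1$. By construction $\eta(x,v)$ is already the spatial component $\pi_1\Phi_1(x,v)$, so it remains to identify the second component.

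First, I would compute the velocity component of $\Phi_1$ in terms of $\eta$. The system \eqref{eq:cc} (and the reflection rule) is invariant under the joint scaling $v\mapsto\lambda v$, $\tau\mapsto\tau/\lambda$, which yields the identity
\begin{equation*}
\eta(x,\lambda v) \;=\; \pi_1\Phi_\lambda(x,v).
\end{equation*}
Differentiating this at $\lambda=1$ gives
\begin{equation*}
\bigl[\nabla_v\eta(x,v)\bigr]\,v \;=\; \frac{d}{d\lambda}\Big|_{\lambda=1}\pi_1\Phi_\lambda(x,v) \;=\; R^M v,
\end{equation*}
which is precisely the velocity component of $\Phi_1(x,v)$. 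Hence $\Phi_1(x,v)=\bigl(\eta(x,v),\,[\nabla_v\eta(x,v)]\,v\bigr)$ and therefore $F=T\circ\Phi_1$.

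Next I would exploit the time-reversal symmetry $T\circ\Phi_\tau\circ T=\Phi_{-\tau}$ of the billiard flow, which follows from the corresponding symmetry of free transport together with the linearity of each reflection operator $\mathcal{R}_x$ (so that $\mathcal{R}_x(-v)=-\mathcal{R}_x v$). This yields $\Phi_1\circ F \;=\; \Phi_1\circ T\circ\Phi_1 \;=\; T\circ\Phi_{-1}\circ\Phi_1 \;=\; T$, whose spatial projection is exactly $\eta(F(x,v))=x$. Moreover, for every $\tau\in[0,1]$, $\Phi_\tau(F(x,v))=T\circ\Phi_{1-\tau}(x,v)$, so the trajectory issued from $F(x,v)$ traces the spatial trajectory from $(x,v)$ in reverse time, which is the second assertion of the lemma. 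For the Jacobian, the billiard flow preserves the Liouville volume (free transport has unit Jacobian and each specular reflection acts as an isometry on the velocity fibre), and $T$ has $|\det|=1$, giving $|\det\nabla F|=1$.

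The principal obstacle is that $\eta$ is only piecewise smooth: the total number of reflections $M(x,v)$ is locally constant off a codimension-one stratum consisting of trajectories that graze $\partial\Omega$ or hit a corner of the trajectory-graph, and across this bad set $\nabla\eta$ jumps. I would therefore restrict the argument to the open, full-measure set on which the trajectory from $(x,v)$ meets $\partial\Omega$ transversally at each of its finitely many reflection points; there $\Phi_\tau$ is smooth, the scaling identity $\eta(x,\lambda v)=\pi_1\Phi_\lambda(x,v)$ propagates through reflections (using that each $\mathcal{R}_{x(\tau_k)}$ is linear, hence commutes with scalar multiplication of the velocity), and the Jacobian computation is justified pointwise. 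The non-accumulation of reflection times in the ball and the half-space, already used in Appendix~\ref{app:FreeTransport} to define $\eta$, is exactly what guarantees the bad set has measure zero, and this almost-everywhere statement is enough for the change of variables made in \eqref{eq:ippPV}.
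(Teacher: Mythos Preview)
Your argument is correct and takes a genuinely different, more conceptual route than the paper. The paper proceeds by brute-force explicit computation in the disk: it uses the closed formula for $\eta$ from \eqref{eq:expliciteta} to verify $-[\nabla_v\eta(x,v)]v=-R_{k(\pi-2A)}v$, then computes $\nabla_x\eta$, $\nabla_x(-[\nabla_v\eta]v)$ and $\nabla_v(-[\nabla_v\eta]v)$ explicitly, assembles the $4\times4$ Jacobian of $F$ as a sum of rank-one blocks $\alpha\,\Theta(\bv)$ plus rotations, diagonalises it by hand (finding two eigenvalues equal to $1$ along $(v,0),(0,v)$ and two more $\mu_{1,2}=1-2k\bigl(k\pm\sqrt{k^2-1}\bigr)$ along $(Sv,\lambda Sv)$), and checks $\mu_1\mu_2=1$. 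Your approach instead recognises $F=T\circ\Phi_1$ via the scaling identity $\eta(x,\lambda v)=\pi_1\Phi_\lambda(x,v)$, deduces the reversal property from the time-symmetry $T\circ\Phi_\tau\circ T=\Phi_{-\tau}$, and obtains the Jacobian from Liouville's theorem for the billiard flow. This is more elegant and, crucially, domain-independent: it applies verbatim in any convex domain where the billiard map is defined, whereas the paper's computation is tied to the rotational structure of the disk. The paper's calculation, on the other hand, is fully self-contained and produces explicit matrix formulas \eqref{eq:naxeta}--\eqref{eq:navnaveta} that feed into other parts of Appendix~\ref{app:FreeTransport}.

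One small point: you conclude only $|\det\nabla F|=1$, while the lemma asserts $\det\nabla F=1$. For the change of variables in \eqref{eq:ippPV} the absolute value is all that is needed, but if you want the sign you should note that $\Phi_1$ is symplectic (the billiard flow preserves $\sum dx_i\wedge dv_i$, hence has Jacobian $+1$), and $\det\nabla T=(-1)^d=+1$ since the trajectory lives in a $2$-plane.
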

The singularity that requires the principal value in \eqref{eq:ippPV} is at $\{v=0\}$ around which we have explicitly $\eta(x,v)=x+v$ hence it will become, through the change of variable, a singularity at $\{w=0\}$ since we have $w=-v$ in the neighbourhood of $0$. The change of variables yields
\begin{align*}
\underset{\Omega}{\int}  \phi(x) \DelsSR \psi(x) \d x &=  c_{d,s}\underset{\Omega\times\RR^d}{\iint} \Big( \phi(x)-\phi\big(\eta(x,v)\big)\Big)\Big( \psi(x)-\psi(\eta(x,v)\big)\Big) \frac{\d v \d x}{|v|^{d+2s}}  \\
&\quad -  c_{d,s}P.V. \underset{\Omega\times\RR^d}{\iint} \phi(y)\Big( \psi\big(\eta(y,w)\big)-\psi(y) \Big) \frac{\d w \d y}{|w|^{d+2s}} 
\end{align*}
and the integration by parts formula follows. 
\end{proof}
Finally, in relation with \eqref{eq:estimateRRd+}, one can see immediately from looking at the integration by part formula in a ball, that the singularity in the operator is of order exactly $2s$.

\subsubsection{The Hilbert space $\HSRs(\Omega)$}
We conclude the analysis of $\DelsSR$ by introducing the associated Hilbert space $\HSRs(\Omega)$. This comes down to interpreting the integration by parts formula as a type of scalar product and considering the associated semi-norm in the spirit of the Gagliardo (semi-)norm on the fractional Sobolev space $H^s(\RR^d)$ and its relation with the fractional Laplacian as presented e.g. in \cite{DiNezza+}. The natural semi-norm associated with the specular diffusion operator reads in the half-space
\begin{align*}
[\psi]_{\HSRs(\RR^d_+)}^2 = \frac{1}{2} \underset{\RR^d_+\times\RR^d_+}{\iint} \big( \psi(x) - \psi(y) \big)^2 K_{\RR^d_+}(x,y) \d x \d y.
\end{align*}
and in the ball
\begin{align*}
[\psi]_{\HSRs(B)}^2 =   \frac{c_{d,s}}{2} \underset{\RR^d \times B}{\iint} \Big( \psi(x) - \psi\big(\eta(x,v)\big) \Big)^2\frac{1}{|v|^{d+2s} }\d x\d v.
\end{align*}
Consequently, we introduce a Hilbert space associated with the specular diffusion operator.
\begin{defi} 
We define the Hilbert space $\HSRs(\Omega)$ as
\begin{equation} \label{def:HSRs}
\HSRs(\Omega) = \Big\{ \psi \in L^2(\Omega) : [\psi]_{\HSRs(\Omega)} <\infty \Big\}
\end{equation}
associated with a scalar product which, on a half-space, read
\begin{equation}
\begin{aligned}
\langle \psi | \phi \rangle_{\HSRs(\RR^d_+)} &= \underset{\RR^d_+}{\int} \psi \phi \d x \\
&\quad + \frac{1}{2} \underset{\RR^d_+\times\RR^d_+}{\iint} \big(\phi(t,x) - \phi(t,y) \big)\big( \psi(t,x) - \psi(t,y) \big) K_{\RR^d_+} (x,y) \d x\d y
\end{aligned}
\end{equation} 
and on the ball becomes
\begin{equation}
\begin{aligned}
\langle \psi | \phi \rangle_{\HSRs(B)} &= \underset{B}{\int} \psi \phi \d x \\
& \quad + \frac{ c_{d,s}}{2} \underset{\RR^d\times B}{\iint} \Big(\phi(t,x) - \phi\big(t,\eta(x,v)\big) \Big)\Big( \psi(t,x) - \psi\big(t,\eta(x,v)\big)\Big) \frac{ \d x\d v}{|v|^{d+2s}}
\end{aligned}
\end{equation}
hence the norm associated with $\HSRs(\Omega)$ is naturally
\begin{align*}
\lVert \psi \lVert_{\HSRs(\Omega)}^2 = \lVert \psi \lVert_{L^2(\Omega)}^2 + [\psi]_{\HSRs(\Omega)}^2.
\end{align*}
\end{defi}
This functional space is strongly linked with the Sobolev space $H^s(\Omega)$ and we refer the interested reader to \cite{DiNezza+} for more details. We notice right away that $\DelsSR$ is self-adjoint on the Hilbert space $\HSRs(\Omega)$ and also, by the estimates on the singularity of the operator established above, we see that $ \HSRs(\Omega) \subset H^s(\Omega)$. 

\subsection{Existence and uniqueness of a weak solution for the macroscopic equation}
We now turn to the specular diffusion equation \eqref{eq:FracHeatSR}-\eqref{eq:FracHeatSRit}.
\begin{repthm}{thm:wellposedness}[Part I]
Let $\Omega$ be a half-space or a ball in $\RR^d$, $u_{in}$ be in $L^2((0,T)\times\Omega)$ and $s$ be in $(0,1)$. For any $T>0$, there exists a unique weak solution $u \in L^2(0,T;\HSRs(\Omega))$ to
\begin{subequations}
\begin{align}
&\pa_t u + \DelsSR u = 0 & (t,x)\in [0,T)\times\Omega  \tag{\ref{eq:FracHeatSR}}\\
&u(0,x) = u_{in} (x) & x\in\Omega \tag{\ref{eq:FracHeatSRit}}
\end{align}
\end{subequations}
in the sense that for any $\psi\in \mathcal{C}^\infty_c(Q_T)$ if $s<1/2$ and any $\psi \in \mathfrak{D}_T(\Omega)$ if $s\geq 1/2$, $u$ satisfies if $\Omega$ is a half-space:
\begin{equation} \tag{\ref{eq:weaksolFracHeathalfspace}}
\begin{aligned}
&\underset{(0,T)\times\Omega}{\iint} u \pa_t \psi \d t\d x+ \underset{\Omega}{\int} u_{in}(x)\psi(0,x) \d x   \\
& -\frac{1}{2} \underset{(0,T)\times\Omega\times\Omega}{\iiint} \big( u(t,x)-u(t,y) \big)\big( \psi(t,x) - \psi(t,y) \big) K(x,y) \d t\d x \d y=0 .
\end{aligned}
\end{equation}
and if $\Omega$ is the unit ball
\begin{equation} \tag{\ref{eq:weaksolFracHeatball}}
\begin{aligned}
&\underset{(0,T)\times\Omega}{\iint} u \pa_t \psi \d t \d x + \underset{\Omega}{\int} u_{in}(x)\psi(0,x) \d x \\
&-\frac{1}{2} \underset{(0,T)\times\Omega\times\RR^d}{\iiint} \big( u(t,x)-u\big(t,\eta(x,v)\big) \big)\big( \psi(t,x) - \psi\big(t,\eta(x,v)\big) \big) \frac{ \d t \d x \d v }{|v|^{d+2s}} =0 .
\end{aligned}
\end{equation}
\end{repthm}

\begin{proof}[Proof of Theorem \ref{thm:wellposedness}, (Part I)] 
We focus on the case $s\geq 1/2$ for the sake of clarity, the proof for $s<1/2$ is similar. The following proof is strongly inspired by the method of Carrillo in \cite{Carrillo98}. We consider an associated problem which comes formally from deriving \eqref{eq:FracHeatSR} for $\bar{u}(t,x) = e^{-\lambda t} u (t,x) $ for some $\lambda >0$:
\begin{equation} \label{eq:fracheatbar}
\begin{aligned}
&\pa_t \bar{u}(t,x) + \lambda \bar{u}(t,x) + \DelsSR \bar{u}(t,x) = 0 & (t,x)\in (0,T)\times\Omega \\
& \bar{u}(0,x) = \bar{u}_{in} (x) & x\in\Omega.  
\end{aligned}
\end{equation}
Note that we do not prescribe any explicit boundary condition on $\dO$. A weak solution of \eqref{eq:fracheatbar} is a function $\bar{u} \in L^2(0,T;\HSRs(\Omega))$ such that for any $\psi\in \mathfrak{D}_T$, 
\begin{align*}
\underset{(0,T)\times\Omega}{\iint} \Big( -\bar{u} \pa_t \psi + \lambda \bar{u} \psi + \bar{u} \DelsSR \psi \Big) \d t \d x - \underset{\Omega}{\int} \bar{u}_{in}(x) \psi(0,x) \d x =0.
\end{align*}
We first prove existence of weak solutions to this problem using a Lax-Milgram argument and we will show afterwards that it implies existence for \eqref{eq:FracHeatSR}-\eqref{eq:FracHeatSRit}. We consider on $\mathfrak{D}_T$ the prehilbertian norm
\begin{align*}
|\psi|^2_{\mathfrak{D}_T} = \lVert \psi \lVert^2_{\HSRs(\Omega)}  + \frac{1}{2} \lVert \psi(0,\cdot) \lVert^2_{L^2(\Omega)}.
\end{align*}
We then introduce the bilinear form $a$ from $L^2(0,T;\HSRs(\Omega)) \times \mathfrak{D}_T$ to $\RR$ defined as
\begin{align*}
a(\bar{u},\psi) = \underset{(0,T)\times\Omega}{\iint} \Big( -\bar{u} \pa_t \psi + \lambda \bar{u} \psi + \bar{u} \DelsSR \psi \Big) \d t \d x 
\end{align*}
and the continuous bounded linear operator $L$ on $\mathfrak{D}_T$:
\begin{align*}
L(\psi) = \underset{\Omega}{\int} \bar{u}_{in}(x) \psi(0,x) \d x.
\end{align*}
From Lemma \ref{lemma:delsint} we know in particular that $\mathfrak{D}_T$ is a subset of $L^2(0,T;\HSRs(\Omega))$ with a continuous injection. Moreover, it is easy to see that $a$ is continuous and it is also coercive since:
\begin{align*}
a(\psi,\psi) = \underset{(0,T)\times\RR^d}{\iint} \lambda \psi^2 + \psi\DelsSR \psi \d t\d x + \frac{1}{2} \underset{\Omega}{\int} \psi(0,x)^2 \d x \geq \min(1,\lambda) |\psi|^2_{\mathfrak{D}_T}
\end{align*} 
hence, the Lax-Milgram theorem gives us existence of a weak solution of \eqref{eq:fracheatbar} in $L^2(0,T;\HSRs(\Omega))$. From this weak solution $\psi$ we define $\bar\psi (t,x) = e^{-\lambda t} \psi(t,x)$ which is obviously in $L^2(0,T;\HSRs(\Omega))$ and weak solution of \eqref{eq:FracHeatSR}-\eqref{eq:FracHeatSRit}. Since the equation is linear, to show uniqueness is equivalent to proving that the only weak solution with initial data $u_{in} = 0$ is the zero function. Call $u_0$ this weak solution. Multiplying \eqref{eq:FracHeatSR} by $u_0$ and integrating over $\Omega$ we have:
\begin{align*}
\underset{\Omega}{\int} \frac{1}{2 }\pa_t \big( u_0^2 \big) \d x =-  \underset{\Omega}{\int} u_0 \DelsSR u_0 \d x \leq 0.
\end{align*}
Hence $\lVert u_0(t,\cdot) \lVert_{L^2(\Omega)}$ is decreasing. Since it was $0$ to start with, that means $u_0 \equiv 0$ and that concludes the proof of uniqueness of solution. Finally, we notice that the integration by parts formula \eqref{eq:IPPDelSR} concludes the proof existence and uniqueness of a weak solution of \eqref{eq:FracHeatSR}-\eqref{eq:FracHeatSRit} in the sense given in Theorem \ref{thm:wellposedness}. \\
\end{proof}

\subsection{Identifying the macroscopic density as the unique weak solution}

Finally, we turn to the last part of Theorem \ref{thm:wellposedness}
\begin{repthm}{thm:wellposedness}[Part II]
If $\Omega$ is a ball or a half-space, the macroscopic density $\rho$ who satisfies \eqref{eq:FracHeatSRweak} for all $\psi\in\mathfrak{D}_T(\Omega)$ is the unique weak solution of \eqref{eq:FracHeatSR}-\eqref{eq:FracHeatSRit}.
\end{repthm}

\begin{proof}
In order to prove this theorem we will show that there is a unique distributional solution of \eqref{eq:FracHeatSRweak}, i.e. a unique $\rho$ such that \eqref{eq:FracHeatSRweak} holds for all $\psi\in \mathcal{C}^\infty_c(Q_T)$ if $s<1/2$ and any $\psi \in \mathfrak{D}_T(\Omega)$ if $s\geq 1/2$. Indeed, since it is obvious that the weak solution of \eqref{eq:FracHeatSR}-\eqref{eq:FracHeatSRit} is also a distributional solution of \eqref{eq:FracHeatSRweak}, if we prove its uniqueness then Theorem \ref{thm:wellposedness} Part II will follow immediately. \\
As usual, to prove uniqueness for linear PDEs, we assume that there are two distributional solutions $\rho_1$ and $\rho_2$ or \eqref{eq:FracHeatSRweak} and we consider their difference $\bar\rho = \rho_1-\rho_2$ which satisfies for any $\psi$ 
\begin{equation} \label{eq:distsoluniq}
\underset{[0,T)\times\Omega}{\iint} \bar\rho \Big( \pa_t \psi - \DelsSR \psi \Big) \d t  \d x = 0
\end{equation}
with $\int_\Omega \bar\rho \d x = 0$ thanks to the conservation of mass. We want to prove that $\bar\rho$ is null. In order to do so, we first introduce the following reverse evolution problem and show its wellposedness:
\begin{prop}
For any $\bar\rho \in L^\infty( [0,T); L^2(\Omega))$ there exists a unique $\psi_{\bar\rho}$ weak solution in $L^2((0,T)\times\Omega)$ of:
\begin{equation} \label{eq:evoHY}
\left\{ \begin{aligned}   &\pa_t \psi_{\bar\rho} - \DelsSR \psi_{\bar\rho} = \bar\rho &\hspace{1cm} (t,x)\in [0,T)\times\Omega \\
									&\psi_{\bar\rho} (T,x) = 0 & x\in\Omega
		\end{aligned} \right.
\end{equation}
\end{prop}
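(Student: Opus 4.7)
The plan is to reduce the backward problem \eqref{eq:evoHY} to a forward problem of exactly the form already handled in Part I of Theorem~\ref{thm:wellposedness}, and then invoke the same Lax-Milgram/Lions machinery on essentially the same bilinear form. More precisely, set
\begin{equation*}
u(t,x) = \psi_{\bar\rho}(T-t,x), \qquad g(t,x) = -\bar\rho(T-t,x),
\end{equation*}
so that, since $\pa_t \psi_{\bar\rho} = \DelsSR \psi_{\bar\rho} + \bar\rho$, the function $u$ formally solves the forward equation $\pa_t u + \DelsSR u = g$ on $[0,T)\times\Omega$ with $u(0,\cdot)=0$, while $g \in L^2((0,T)\times\Omega)$. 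It therefore suffices to prove existence and uniqueness for this forward problem in $L^2(0,T;\HSRs(\Omega))$.

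For existence I would follow the Lax-Milgram argument of Part I almost verbatim. Apply the change of unknown $\bar u = e^{-\lambda t} u$ (for some $\lambda>0$) to get $\pa_t \bar u + \lambda \bar u + \DelsSR \bar u = e^{-\lambda t} g$ with $\bar u(0,\cdot)=0$, and test against $\psi \in \mathfrak{D}_T$. Since both boundary contributions in the time integration by parts vanish (from $\bar u(0,\cdot)=0$ and $\psi(T,\cdot)=0$), the weak formulation becomes
\begin{equation*}
a(\bar u,\psi) := \underset{(0,T)\times\Omega}{\iint} \big(-\bar u \pa_t \psi + \lambda \bar u \psi + \bar u \DelsSR \psi\big) \d t \d x = \underset{(0,T)\times\Omega}{\iint} e^{-\lambda t} g \, \psi \d t \d x =: L(\psi).
\end{equation*}
This is literally the bilinear form from Part I, only the linear form $L$ has changed. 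Its continuity on $\mathfrak{D}_T$ endowed with $|\psi|^2_{\mathfrak{D}_T} = \lVert\psi\lVert^2_{L^2(0,T;\HSRs)}+\tfrac{1}{2}\lVert\psi(0,\cdot)\lVert^2_{L^2(\Omega)}$ is immediate by Cauchy-Schwarz, and the coercivity computation of Part I applies unchanged: integration by parts in time yields $-\iint \psi \pa_t \psi\d t\d x = \tfrac{1}{2}\lVert\psi(0,\cdot)\lVert^2_{L^2(\Omega)}$ (using $\psi(T,\cdot)=0$), and the integration-by-parts formulas \eqref{eq:IPPDelSR}/\eqref{eqball:ipp} give $\iint \psi \DelsSR \psi \d t \d x = \int_0^T [\psi(t,\cdot)]^2_{\HSRs(\Omega)}\d t \geq 0$, so $a(\psi,\psi) \geq \min(1,\lambda)|\psi|^2_{\mathfrak{D}_T}$. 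Lions' variant of the Lax-Milgram theorem then produces a unique $\bar u \in L^2(0,T;\HSRs(\Omega))$ satisfying $a(\bar u,\psi)=L(\psi)$ for all $\psi \in \mathfrak{D}_T$, and undoing the exponential shift and the time reversal delivers the desired $\psi_{\bar\rho}$.

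For uniqueness, if $\psi_1,\psi_2$ are two weak solutions in $L^2(0,T;\HSRs(\Omega))$ with the same datum $\bar\rho$, their difference $w$ solves the homogeneous backward problem $\pa_t w - \DelsSR w=0$ with $w(T,\cdot)=0$. A formal energy estimate yields
\begin{equation*}
\frac{1}{2}\frac{\d}{\d t}\lVert w(t,\cdot)\lVert^2_{L^2(\Omega)} = \underset{\Omega}{\int} w \DelsSR w \, \d x = [w(t,\cdot)]^2_{\HSRs(\Omega)} \geq 0,
\end{equation*}
so $t \mapsto \lVert w(t,\cdot)\lVert^2_{L^2(\Omega)}$ is nondecreasing; combined with $\lVert w(T,\cdot)\lVert_{L^2(\Omega)}=0$ this forces $w\equiv 0$. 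The main technical obstacle is to make this last step rigorous, since $w \in L^2(0,T;\HSRs)$ is not a priori admissible as a test function in the weak formulation. I would handle this by a standard Steklov-averaging argument in the time variable to regularize $w$, combined with a density argument for the spatial test class $\mathfrak{D}_T$; alternatively one can run a Galerkin scheme (spectral in the self-adjoint operator $\DelsSR$ on $\HSRs(\Omega)$) which delivers the energy identity directly on the approximants and passes to the limit.
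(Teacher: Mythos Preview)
Your proposal is correct and follows essentially the same route as the paper: reduce the backward problem to a forward problem with source via time reversal $u(t,x)=\psi_{\bar\rho}(T-t,x)$, then run the Lax--Milgram argument of Part~I with the linear form modified to $L(\psi)=\iint e^{-\lambda t}g\,\psi\,\d t\,\d x$. If anything you are more careful than the paper, which does not comment on the admissibility issue in the uniqueness step.
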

\begin{proof}
The proof of part 1 of Theorem \ref{thm:wellposedness} above can easily be adapted to show existence of uniqueness of weak solution in $L^2(0,T; \HSRs(\Omega))$ of \eqref{eq:FracHeatSR}-\eqref{eq:FracHeatSRit} with a source term $S$, namely:
\begin{align*}
&\pa_t u + \DelsSR u = S(t,x) & (t,x)\in [0,T)\times\Omega \\
&u(0,x) = u_{in} (x) & x\in\Omega .
\end{align*}
To do so, one only needs to change the continuous bounded linear map $L$ to 
\begin{align*}
L(\psi) = \underset{\Omega}{\int} \bar{u}_{in} (x) \psi(0,x) \d x + \underset{(0,T)\times\Omega}{\iint} \bar{S} \psi  \d t \d x
\end{align*}
where $\bar{S}(t,x) = e^{-\lambda t} S(t,x)$, and the rest of the proof holds. Hence, if we consider this weak solution $u$ and define $\psi_{\bar\rho}(t,x) = u( T-t, x)$ as well as choose $S$ such that $\bar\rho(t,x) = -S(T-t, x)$ and take $u_{in} = 0$, this gives us the unique $\psi_{\bar\rho}$ weak solution of \eqref{eq:evoHY} in $L^2(0,T ; \HSRs(\Omega))$.
\end{proof}
We see now that if we can use $\psi_{\bar\rho}$ as a test function in \eqref{eq:distsoluniq} then we will have 
\begin{align*}
\underset{[0,T)\times\Omega}{\iint} \bar\rho^2 \d x \d t = 0 
\end{align*}
which concludes the proof of uniqueness of the distributional solution $\rho$ of \eqref{eq:weakFHSR}. It remains to show that $\psi_{\bar\rho}$ is an admissible test function for \eqref{eq:FracHeatSRweak}.
 
When $s<1/2$, since $\mathcal{C}^{\infty}([0,T)\times\bO)$ is dense in $L^\infty([0,T); \HSRs(\Omega))$ with respect to the $\HSRs$-norm, the result is immediate. \\

When $s>1/2$, however, the test functions in \eqref{eq:distsoluniq} need to be in $\mathfrak{D}_T$ so we need to understand the behaviour of $\psi_{\bar\rho}$ on the boundary. Let us recall that $\mathfrak{D}_T$ is defined as:
\begin{equation*}
\mathfrak{D}_T(\Omega) = \Big\{ \psi\in\mathcal{C}^\infty ([0,T)\times\bO) \text{ s.t. } \psi(T,\cdot)=0 \text{ and } \forall x\in\dO : \na_x \psi(t,x)\cdot n(x) =0 \Big\}.
\end{equation*} 
The interaction between the singularity in the specular diffusion operator and the boundary leads us to believe that $\psi_{\bar\rho}$ satisfies a rather strong, non-local boundary condition but we are unable to write this condition explicitly since it is contained in the action of $\DelsSR$. As a consequence, we will show instead that $\psi_{\bar\rho}$ satisfies, in particular, an homogeneous Neumann condition. To that end, we first regularize with respect to time the right hand side of \eqref{eq:evoHY}, and call $n$ the regularizing parameter. Since the operator $\DelsSR$ is self-adjoint and dissipative it generates a strongly continuous semi-group of contractions and as a consequence one can prove, see \cite{Pazy} Section 4.2 for more details, that for each $n$ there exists a unique strong solution $\psi_n$ of \eqref{eq:evoHY} which, in particular, satisfies for any $t$
\begin{equation} \label{eq:linfpsin}
\DelsSR \psi_n (t,x) \in L^\infty(\Omega).
\end{equation}
Moreover, we have the following lemma:
\begin{lemma} \label{lem:Neumann}
Let $\Omega$ be a ball or a half-space and $s>1/2$. For any $\psi$ such that $\DelsSR \psi (x) \in L^\infty (\Omega)$, we have
\begin{equation} \label{eq:Neumannpsi}
\na_x \psi(t,x) \cdot n(x) = 0  \hspace{2cm} \forall x\in \dO.
\end{equation}
\end{lemma}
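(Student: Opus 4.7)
The approach is by contradiction. Suppose at some $(t_0, x_0)$ with $x_0 \in \dO$ we have $a := \na\psi(t_0,x_0) \cdot n(x_0) \neq 0$; I will show that $\DelsSR \psi(t_0, \cdot)$ must blow up as $x \to x_0$ from inside $\Omega$, contradicting the hypothesis. The mechanism is that $\DelsSR$ effectively sees the even reflection of $\psi$ across $\dO$; a nonzero normal derivative creates a Lipschitz corner in that reflection, and for $s > 1/2$ the fractional Laplacian of such a corner is unbounded. The quantitative ingredient is the one-dimensional homogeneity computation
$$(-\Delta_t)^s |t|(t) = -\kappa_s |t|^{1-2s}, \qquad t \neq 0,$$
with $\kappa_s > 0$ explicit: write $|t| = t + 2t_-$, use $(-\Delta_t)^s t = 0$, and evaluate the P.V.\ integral against $t_-$ by scaling. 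Crucially $1 - 2s < 0$, so this diverges as $t \to 0$.

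For the half-space $\Omega = \RR^d_+$, the first step is to establish the identity
$$\DelsSR \psi(t,x) = (-\Delta_x)^s \tilde\psi(t,x), \qquad x \in \Omega,$$
where $\tilde\psi(\bar x, x_d) := \psi(\bar x, |x_d|)$. This follows from a case split on \eqref{eq:etaexplicitHS}: whether or not the ray from $x$ with velocity $w$ reflects, one checks that $\psi(\eta(x,w)) = \tilde\psi(x+w)$, so the two singular integrals coincide. Working with $\psi$ of class $C^2$ up to the boundary (which fits the regularized setting producing $\psi_n$), decompose
$$\tilde\psi(t_0, x) = \phi(x) + a(\bar x)|x_d|, \qquad a(\bar x) := \pa_{x_d}\psi(t_0, \bar x, 0),$$
where the $a|x_d|$ term absorbs exactly the jump in the normal derivative, so $\phi$ is $C^1$ across $\{x_d = 0\}$. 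Fix a smooth cut-off $\chi$ near $x_0$; for $x = (\bar x_0, x_d)$ with $x_d > 0$ small, integrating the tangential directions out of $(-\Delta_x)^s(a(\bar x_0)\chi|x_d|)$ reduces to the 1D computation above and gives $-a\kappa_s' x_d^{1-2s}$ plus bounded terms, with $\kappa_s' > 0$. The remainders $(-\Delta_x)^s\phi$ and $(-\Delta_x)^s\bigl((a(\bar x)-a(\bar x_0))\chi|x_d|\bigr)$ stay uniformly bounded near $x_0$ by the $C^1$ regularity of $\phi$ and $a$ together with the cut-off. Since $s > 1/2$ forces $x_d^{1-2s} \to +\infty$, $\DelsSR \psi$ is unbounded near $x_0$, which is the desired contradiction.

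For the unit ball, I would argue by localization. Rotate so $n(x_0) = e_d$ and write $\dO$ near $x_0$ as a smooth graph over the tangent plane. Split the integral defining $\DelsSR \psi(x)$ for $x$ close to $x_0$ into $|w| \leq r_0$ and $|w| > r_0$ for some small $r_0$: the outer part is uniformly bounded because $\psi$ is bounded and the kernel is integrable away from the singularity; on the inner part, any trajectory $\eta(x,w)$ undergoes at most one reflection (the boundary has strictly positive reach), and the single-reflection map agrees with the half-space reflection up to $O(|w|^2)$ curvature corrections. In local coordinates this yields $\DelsSR \psi(x) = (-\Delta_x)^s \tilde\psi_{\mathrm{loc}}(x) + O(1)$ uniformly near $x_0$, where $\tilde\psi_{\mathrm{loc}}$ inherits the same Lipschitz corner whenever $a \neq 0$, and the $|x_d|^{1-2s}$ blow-up reappears, closing the proof. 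The main obstacle is precisely this localization: one has to verify that the $O(|w|^2)$ curvature corrections to $\eta$ and the multi-reflection tails (whose trajectory lengths are bounded below) perturb $(-\Delta_x)^s\tilde\psi_{\mathrm{loc}}$ only by a function that is genuinely bounded, not merely in some $L^p$, uniformly as $x \to x_0$. These estimates should follow from the fine regularity properties of $\eta$ collected in Appendix \ref{app:FreeTransport}.
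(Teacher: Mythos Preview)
Your half-space argument is essentially the paper's: both identify $\DelsSR\psi$ with $(-\Delta)^s$ acting on the even extension $\tilde\psi$, and then observe that boundedness forces enough regularity on $\tilde\psi$ across $\{x_d=0\}$ to kill the normal derivative. Your explicit $|t|^{1-2s}$ computation makes this quantitative, but the idea is the same.

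For the ball your localization strategy departs from the paper and contains a genuine gap. The assertion that for $|w|\leq r_0$ the trajectory $\eta(x,w)$ undergoes at most one reflection is false as $x\to x_0\in\dO$: for directions $w$ nearly tangent to $\dO$ the chord length $L=2\cos A$ tends to zero (cf.\ \eqref{eq:Lgrazing}, which gives $L\sim\sqrt{1-|x|^2}$ at worst), so arbitrarily many reflections occur within length $r_0$. The multi-reflection contribution is therefore not confined to $|w|>r_0$, and it lives exactly in the near-grazing region where the singularity is. To salvage your approach you would need a uniform $L^\infty$ bound on this grazing contribution as $x\to\dO$, which Appendix~\ref{app:FreeTransport} does not supply (those are $L^p$ estimates).

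The paper's argument in the ball is entirely different and avoids localization. It subtracts the first-order Taylor term to split
\[
\DelsSR\psi(x)=\int\frac{\psi(x)-\psi(\eta(x,v))-\na\psi(x)\cdot(\eta(x,v)-x)}{|v|^{d+2s}}\,\d v \;+\; \na\psi(x)\cdot P.V.\int\frac{\eta(x,v)-x}{|v|^{d+2s}}\,\d v,
\]
where the first integral is bounded since the integrand is $O(|\eta(x,v)-x|^2)$. It then shows directly that the drift vector on the right equals $f(x)\,n(x)$ with $f(x)\to-\infty$ as $x\to\dO$: the tangential components vanish by a reflection symmetry of the ball (each hyperplane through the axis $Ox$ commutes with $\eta$), and the normal component is proved to diverge by an explicit lower bound. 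This global symmetry argument handles all trajectories, including the grazing ones, in one stroke.
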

Postponing the proof of this lemma, let us conclude the proof of Theorem \ref{thm:wellposedness}. For each $n$, $\psi_n$ satisfies the homogeneous Neumann boundary condition and belongs at least in $\HSR2s(\Omega)$ since it is a strong solution of \eqref{eq:evoHY}. As a result, we can approach $\psi_n$ by functions in $\mathfrak{D}_T$ with respect to the  $\HSR2s(\Omega)$-norm, which is strong enough to take the limit in \eqref{eq:distsoluniq}. Hence, $\psi_{\bar\rho}$ is an admissible test function for \eqref{eq:distsoluniq}, which yields the uniqueness of the distributional solution of \eqref{eq:weakFHSR}.
\end{proof}

\begin{proof}[Proof of Lemma \ref{lem:Neumann}]
For the half-space, we notice that $\DelsSR \psi$ can be interpreted as the fractional Laplacian acting on its mirror-extension $\widetilde{\psi}$ defined as:
\begin{equation} \label{eq:widetildepsi}
\widetilde{\psi}(t,x) = \left| \begin{aligned}   &\psi(t,x) &\mbox{if } x_d \geq 0\\
													 & \psi(t,[ \bar{x} , -x_d] ) &\mbox{if } x_d \leq 0
							\end{aligned} \right.
\end{equation}
with the notations from Section \ref{subsubsec:lemmaCVSRhalfspace}. The boundary behaviour of $\psi$ follows readily because we know that in order for $\Dels \widetilde{\psi}$ to be bounded, $\widetilde{\psi}$ has to be at least $\mathcal{C}^{1,2s-1}$ on $\RR^d$. Since it is a mirror-extension that means $\psi$ has to satisfy an homogeneous Neumann condition on the boundary:
\begin{align*}
\na_x \psi(t,x) \cdot n(x) = 0  \hspace{2cm} \forall x\in \dO.
\end{align*}
Note that the same line of argument would also hold in a stripe or a cube since we can define in those cases an extension that consists of a composition of mirror extensions and such that $\DelsSR \psi$ coincides with the action of $\Dels$ on that extension.\\

When $\Omega$ is a ball, since  $\DelsSR \psi (x) \in L^\infty (\Omega)$, we have
\begin{align*}
&\underset{\RR^d}{\int} \Big[ \psi (x) - \psi\big(\eta(x,v)\big) - \na \psi (x) \cdot \big(\eta(x,v) -x\big)  \Big] \frac{\d v}{|v|^{d+2s}} \\
& \hspace{1cm} + P.V. \underset{\RR^d}{\int}  \na \psi (x) \cdot \big(\eta(x,v) -x\big) \frac{\d v}{|v|^{d+2s}}
\end{align*}
in $L^\infty(\Omega)$. In the first integral 
\begin{align*} 
\psi (x) - \psi \big(\eta(x,v)\big) -  \na \psi (x) \cdot \big(\eta(x,v) -x\big) = O( |x-\eta(x,v)|^2)
\end{align*}
which means the integral is finite since $2s<2$. Hence, we have
\begin{align} \label{eqball:neumannpsi}
 \na\psi (x) \cdot P.V. \underset{\RR^d}{\int} \big(\eta(x,v) -x\big) \frac{\d v}{|v|^{d+2s}} \in L^\infty(\Omega).
\end{align}
Let us show that there is a function $f(x)$ such that 
\begin{equation} \label{eqball:neumannK}
 P.V. \underset{\RR^d}{\int} \big(\eta(x,v) -x\big) \frac{\d v}{|v|^{d+2s}}  =  f(x) n(x) \hspace{0.5cm} \text{ with }\hspace{0.5cm} f(x) \underset{x\rightarrow \dO}{\rightarrow} -\infty 
\end{equation}
where $n(x)$ denotes the extended outward normal vector: $n(x) = x/|x|$ if $x\neq 0$. We write the integral in a orthonormal coordinates system that starts with $e_1 = n(x)$ and with the notation $\eta(x,v) = \sum \eta_i(x,v) e_i$. We have:
\begin{align*}
P.V.\underset{\RR^d}{\int} \big(\eta(x,v)-x\big) \frac{\d v}{|v|^{d+2s}}&=  \bigg(P.V. \underset{\RR^d}{\int} \big(\eta_1(x,v) - |x|\big) \frac{\d v}{|v|^{d+2s}} \bigg) e_1 \\
& \quad + \underset{2\leq i\leq d}{\sum} \bigg( P.V.\underset{\RR^d}{\int} \eta_i(x,v) \frac{\d v}{|v|^{d+2s}} \bigg) e_i \\
&:= I_1 n(x) + \underset{2\leq i \leq d}{\sum} I_i e_i.
\end{align*}
For the coefficient $I_2$ we notice that if we call $T_2 :y\in\RR^d \mapsto y -  2y_2 e_2$, the mirror image of $y$ with respect to the hyperplane $\{y_2=0\}$, then it is easy to see that the ball is invariant by $T_2$: $T_2 (B_1) = B_1$ which means that $\eta$ acts in $T_2(B_1)$ exactly as it acts on $B_1$. As a consequence, $T_2$ and $\eta$ commute: $\eta(x,T_2 v)= T_2 \eta(x,v)$ which yields when we write explicitly the principle value:
\begin{align*}
& I_2 = \underset{\eps\rightarrow 0}{\lim} \underset{\{|v_1|\geq \eps\}\times\RR^{d-2}}{\iint} \Bigg( \underset{v_2>0}{\int} \eta_2(x,v) \frac{\d v_2}{|v|^{d+2s}} + \underset{v_2<0}{\int}  \eta_2(x,v)\frac{\d v_2}{|v|^{d+2s}} \Bigg) \d v_1 \d v_3\cdots \d v_d\\
&=\underset{\eps\rightarrow 0}{\lim} \underset{\{|v_1|\geq \eps\}\times\RR^{d-2}}{\iint} \Bigg( \underset{v_2>0}{\int} \eta_2(x,v) \frac{\d v_2}{|v|^{d+2s}} + \underset{v_2>0}{\int}  (\eta_2(x,Tv))\frac{\d v_2}{|v|^{d+2s}} \Bigg) \d v_1 \d v_3\cdots \d v_d\\
&=\underset{\eps\rightarrow 0}{\lim}\underset{\{|v_1|\geq \eps\}\times\RR^{d-2}}{\iint} \Bigg( \underset{v_2>0}{\int} \eta_2(x,v) \frac{\d v_2}{|v|^{d+2s}} + \underset{v_2>0}{\int}  (-\eta_2(x,v))\frac{\d v_2}{|v|^{d+2s}} \Bigg) \d v_1 \d v_3\cdots \d v_d\\
&= 0 .
\end{align*}
The same holds for all $I_i$, $i\geq 2$ so that we can define a function $f(x)= I_1$ with which
\begin{align*}
P.V.\underset{\RR^d}{\int} \big(\eta(x,v)-x\big) \frac{\d v}{|v|^{d+2s}}&=  \bigg(P.V. \underset{\RR^d}{\int} \big(\eta_1(x,v) - |x|\big) \frac{\d v}{|v|^{d+2s}} \bigg) n(x) := f(x) n(x) .
\end{align*}
To understand the behaviour of $f$ as $x$ goes to the boundary we split the integral as follows, for some $R>0$ fixed, writing $B_{1-|x|}$ the ball centred at $0$ of radius $1-|x|$ and $C_R$ the cube centred at $0$ of side $2R$ (assuming w.l.o.g. that $R>1-|x|$):
\begin{align*}
f(x) =& P.V. \underset{B_{1-|x|}}{\int} \big((\eta_1(x,v) - |x|\big) \frac{\d v}{|v|^{d+2s}} \\
&\hspace{1cm} + \underset{C_R\setminus B_{1-|x|}}{\int} \big(\eta_1(x,v) - |x|\big) \frac{\d v}{|v|^{d+2s}} +\underset{\RR^d\setminus C_R}{\int} \big(\eta_1(x,v) - |x|\big) \frac{\d v}{|v|^{d+2s}} .
\end{align*}
For the first term on the right-hand-side, we use the explicit expression of $\eta$ when there are no reflections: $\eta(x,v) = x+v$ in order to write
\begin{align*}
P.V. \underset{B_{1-|x|}}{\int} \big((\eta_1(x,v) - |x|\big) \frac{\d v}{|v|^{d+2s}} = \underset{\eps\rightarrow 0}{\lim} \underset{\eps < |v|< 1-|x|}{\int} v_1 \frac{\d v}{|v|^{d+2s}} = 0
\end{align*}
because the integrand is an odd function and the domain is radially symmetric. For the last term in the expression of $f(x)$ we write
\begin{align*}
\bigg| \underset{\RR^d\setminus C_R}{\int} \big(\eta_1(x,v) - |x|\big) \frac{\d v}{|v|^{d+2s}}\bigg| \leq \underset{|v|>R}{\int} \frac{|v|}{|v|^{d+2s}} \d v = \frac{1}{2s R^{2s-1}} 
\end{align*}
which is fixed with $R$.  Finally, for the second term in the expression of $f(x)$, we want to identify a sign in the integrand to which end we introduce 
\begin{align*}
\mathcal{E}(x) = \Big( C_R\setminus B_{1-|x|} \Big) \cap \Big( \big\{ -R \leq v_1 \leq 0 \big\} \cup \big\{ 2(1-|x|) \leq v_1 \leq R \big\} \Big)
\end{align*}
so that for any $v\in \mathcal{E}(x)$ we have $\eta_1(x,v)-|x| \leq 0$ (note that the set of all velocities such that $\eta_1(x,v)-|x|\leq 0$ is actually a little bigger that $\mathcal{E}(x)$ because of the curvature of $\dO$, if $\dO$ was a straight line that it would be precisely $\mathcal{E}(x)$). We also write $\mathcal{E}^c(x) =  ( C_R\setminus B_{1-|x|} ) \setminus \mathcal{E}(x)$ its complement in $ C_R\setminus B_{1-|x|}$ with which we have
\begin{align*}
&\underset{C_R\setminus B_{1-|x|}}{\int} \big(\eta_1(x,v) - |x|\big) \frac{\d v}{|v|^{d+2s}} \\
&\quad = - \underset{\mathcal{E}(x)}{\int} \big| \eta_1(x,v) - x \big| \frac{\d v}{|v|^{d+2s}} + \underset{\mathcal{E}^c(x)}{\int} \big( \eta_1(x,v) - x \big) \frac{\d v}{|v|^{d+2s}}.
\end{align*}
We introduce the notations 
\begin{align*}
\mathcal{E}(x,v_1)= \Big\{ (v_2,\cdots,v_d) : \sqrt{ (1-|x|)^2-v_1^2} \leq |v_2|,\cdots,|v_d|\leq R \Big\}
\end{align*} and 
\begin{align*}
\mathcal{E}^c(x,v_1)= \Big\{ (v_2,\cdots,v_d) : \sqrt{(1-x)^2-v_1^2 } \leq  |v_2|,\cdots,|v_d| \leq 2(1-|x|) \Big\}
\end{align*}
such that for a fixed $v_1$ the projection of $\mathcal{E}(x)$ on $\lbrace w\in\RR^d : w_1=v_1\rbrace$ is $\mathcal{E}(x,v_1)$ if $-R\leq v_1\leq 0$ and $[-R,R]^{d-1}$ if $2(1-|x|) \leq v_1 \leq R$, and the projection of $\mathcal{E}^c(x)$ is $\mathcal{E}^c(x,v_1)$. With those, we have on the one hand
\begin{align*}
\underset{\mathcal{E}(x)}{\int} \big| \eta_1(x,v) - x \big| \frac{\d v}{|v|^{d+2s}} &= \underset{v_1=-R}{\overset{|x|}{\int}} \bigg( \underset{\mathcal{E}(x,v_1)}{\int} \frac{\big| \eta_1(x,v) - x \big| }{|v|^{d+2s}} \d v_2 \cdots \d v_d \bigg) \d v_1 \\
&\quad + \underset{v_1=2(1-|x|)}{\overset{R}{\int}} \bigg( \underset{[-R,R]^{d-1}}{\int} \frac{\big| \eta_1(x,v) - x \big| }{|v|^{d+2s}} \d v_2 \cdots \d v_d \bigg) \d v_1 
\end{align*}
and on the other hand
\begin{align*}
\bigg| \underset{\mathcal{E}^c(x)}{\int} \big( \eta_1(x,v) - x \big) \frac{\d v}{|v|^{d+2s}}\bigg| \leq \underset{v_1=0}{\overset{2(1-|x|)}{\int}} \bigg( \underset{\mathcal{E}^c(x,v_1)}{\int} \frac{\big|\eta_1(x,v)-x\big|}{|v|^{d+2s}}\d v_2 \cdots \d v_d \bigg) \d v_1.
\end{align*}
We see that it is the same integrand but in the integral over $\mathcal{E}^c(x)$, the volume of the domain of integration $\big( 0, 2(1-|x|)\big)\times \mathcal{E}^c(x,v_1)$ goes to $0$ as $x$ approaches the boundary whereas the domains $(-R,|x|)\times \mathcal{E}(x,v_1)$ and $\big( 2(1-|x|),R)\times(-R,R)^2$ do not, hence the first term is negligible in the limit before the second and we have
\begin{align*}
|f(x)| \underset{x\rightarrow \dO}{\sim}& \underset{\mathcal{E}(x)}{\int} \big| \eta_1(x,v) - x \big| \frac{\d v}{|v|^{d+2s}} \geq \underset{v_1=-R}{\overset{|x|}{\int}} \bigg( \underset{\mathcal{E}(x,v_1)}{\iint} \frac{v_1 }{|v|^{d+2s}} \d v_2 \d v_3 \bigg) \d v_1 \\
&\geq \underset{1-|x|\leq |v_2|,|v_3|\leq R}{\iint} \bigg( \underset{-R}{\overset{|x|-1}{\int}} \frac{v_1 }{|v|^{d+2s}}\bigg)  \d v_2 \d v_3\\
&\geq \underset{1-|x|\leq |v_2|,|v_3|\leq R}{\iint} \frac{C}{\Big( (1-|x|)^2 + v_2^2 + v_3^2 \Big)^{(d+2s-2)/2}} \d v_2 \d v_3.
\end{align*}
As $x$ approaches the boundary, the integrand tends to $1/(v_2^2 +v_3^2)^{d-1+2s-1}$ and the domain to $[-R,R]^{d-1}$ so the integral diverges since $2s-1>0$.\\
We have proved \eqref{eqball:neumannK} which, together with \eqref{eqball:neumannpsi}, yields Lemma \eqref{lem:Neumann}. 
\end{proof}

\appendix

\section{Free transport equation in a sphere}  \label{app:FreeTransport}

In this appendix, we call $\Omega$ the unit ball in $\RR^d$ and we consider the trajectories in $\Omega$ described by \eqref{eq:etaexplicit} and the associated $\eta$ function. We recall that what we name "\textit{trajectory that starts from $x\in\Omega$ with velocity $v\in\RR^d$}" the trajectory that consists of straight lines, specularly reflected upon hitting the boundary, and that stops when the length of the trajectory is $|v|$, as illustrated by Figure \ref{fig:extraj} in Section \ref{section:sr}. \\
We first note that a trajectory in $\Omega$ is necessarily included in a plane of dimension 2. Indeed, by definition of the specular reflection, when the trajectory hits the boundary, the reflected velocity is a linear combination of the initial velocity and the normal vector: for $t\in [0,1]$ such that $|x+tv|=1$, $Rv = v - 2 ( n(x+tv) \cdot v ) n(x+tv)$ where is $n(x+tv)=x+tv$ because $\Omega$ is the unit ball. Since the normal vector belongs to the plane generated by $x$ and $v$ we see that the reflected velocity also belongs to that same plane, and every reflected velocities along this trajectory. As a consequence, we restrict the study of the regularity of $\eta$ in a ball to the case of a disk in dimension $d=2$.\\

%Let us recall, as mentioned in the introduction, that the regularity of $\eta$ is strongly linked with the regularity of the free transport equation in a ball. Indeed, if we consider the following free transport equation with a velocity-homogeneous initial datum $\psi(x)$: 
%\begin{align*}
%&\pa_t f  + v\cdot \na_x f = 0 & (t,x,v) \in [0,T)\times\Omega\times\RR^d \\
%&f(0,x,v) = \psi(x) & (x,v)\in\Omega\times\RR^d \\
%&\gamma_- f(t,x,v) = \gamma_+ f(t,x,\mathcal{R}_x v )  &(t,x,v) \in [0,T)\times \dO \times \lbrace v : v\cdot n(x) <0\rbrace
%\end{align*} 
%then, using \eqref{eq:APSRmain}-\eqref{eq:APSRit}-\eqref{eq:APSRbc}, it is straightforward to see that a solution of this problem is 
%\begin{equation*}
%f(t,x,v) = \psi\big( \eta(x, -t v) \big) .
%\end{equation*}
%Hence, the regularity results of $\eta$ given below informs us on how regularity with respect to the velocity propagates through the free transport equation in a ball with specular reflection on the boundary. \\
%
%{\color{blue} If we write for $v\in\RR^d$: $f(t,x) = \psi(\eta(x,-tv))$ where $\psi$ is given by Proposition \ref{prop:solAPSR} then we see that $\pa_t f = - v\cdot \na_v \big[ \psi(\eta(x,-tv) ) \big]$ so that $f$ is solution of the free transport equation :
%\begin{align*}
%&\pa_t f  + v\cdot \na_x f = 0 & (t,x,v) \in [0,T)\times\Omega\times\RR^d \\
%&f(0,x,v) = \psi(x) & (x,v)\in\Omega\times\RR^d \\
%&\gamma_- f(t,x,v) = \gamma_+ f(t,x,\mathcal{R}_x v )  & (t,x,v) \in [0,T)\times \dO \times \lbrace v : v\cdot n(x) <0\rbrace
%\end{align*} 
%}

\subsection{Explicit expression of the trajectories} \label{subsec:etaballexp}

Consider $(x,v)$ in $\Omega \times \RR^2$, we call $k$ the number of reflections that the trajectory which starts at $x$ with velocity $v$ undergoes. We also introduce
\begin{itemize}
\item $\theta$ such that $[\cos\theta,\sin\theta]$ is the first point of reflection,
\item $A$ the angle between the vector $v$ and the outward normal to $\pa \Omega$ at $[\cos\theta,\sin\theta]$ (which, in the unit ball, is $[\cos\theta,\sin\theta]$ itself),
\item $z_j = \big[\cos\big(\theta + j(\pi -2A)\big), \sin\big(\theta + j(\pi -2A)\big)\big]$ for any $j\in\mathbb{Z}$. Note that $z_0$ is the first point of reflection.
\end{itemize} 
\begin{prop}
For any $k\geq 0$ we have
\begin{equation} \label{eq:expliciteta}
\eta(x,v) = k \big( z_{k-1}-z_k \big) + R_{k(\pi-2A)} (x+v)
\end{equation}
where $R_{k(\pi-2A)}$ is the matrix of the rotation of angle $k(\pi-2A)$.
\end{prop}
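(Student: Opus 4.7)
The plan is to prove the formula by induction on $k$, with the base case $k=0$ being the straight-line trajectory $\eta(x,v) = x+v$, which matches the formula since $R_0$ is the identity and the coefficient $k=0$ kills the first term.

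Before the induction I would establish two geometric identities using complex-number notation $\mathbb{R}^2 \cong \mathbb{C}$. First, by definition $z_j = e^{i\theta}e^{ij(\pi-2A)}$, so $z_{j+1} = R_{\pi-2A}(z_j)$ and all chords $z_{j+1}-z_j = e^{ij(\pi-2A)}(z_1-z_0)$ have the same length $L = 2\cos A$, with consecutive chord directions related by the rotation $R_{\pi-2A}$. Second, specular reflection at $z_0$ rotates the unit velocity by exactly $\pi - 2A$: taking $\hat v = e^{i(\theta+A)}$ so that $\hat v$ makes angle $A$ with the outward normal $z_0 = e^{i\theta}$, a short complex computation gives $\hat u_1 := \hat v - 2(\hat v \cdot z_0)z_0 = e^{i(\theta+\pi-A)} = R_{\pi-2A}(\hat v)$. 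Consequently $R_{\pi-2A}(v) = |v|\hat u_1$, and by iteration $R_{k(\pi-2A)}(z_0) = z_k$ together with $R_{k(\pi-2A)}(\hat v) = \hat u_k$, where $\hat u_k$ is the direction of the $(k{+}1)$-st chord.

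For the inductive step I would use the natural decomposition: the trajectory from $(x,v)$ undergoing $k+1$ reflections goes in a straight line from $x$ to $z_0$ (length $T_0 := |z_0 - x|$), and from $z_0$ onward coincides with the trajectory starting at $(z_0, (|v|-T_0)\hat u_1)$, which undergoes exactly $k$ reflections. The restarted trajectory has shifted first reflection angle $\theta' = \theta+(\pi-2A)$, the same reflection angle $A$, and therefore shifted reflection points $z'_j = z_{j+1}$. Applying the inductive hypothesis yields
\begin{equation*}
\eta(x,v) = k\bigl(z_k - z_{k+1}\bigr) + R_{k(\pi-2A)}\!\bigl(z_0 + (|v|-T_0)\hat u_1\bigr).
\end{equation*}
Expanding $z_0 = x + T_0 \hat v$, applying $R_{k(\pi-2A)}$ term by term, and invoking the two identities above turns the rotated bracket into $R_{(k+1)(\pi-2A)}(x+v) + z_k + (|v|-T_0)\hat u_k - T_0 \hat u_k - |v|\hat u_k$ after collecting terms; the $\hat u_k$ contributions cancel, leaving one extra chord $(z_k - z_{k+1})$ (using $L\hat u_{k+1} = z_{k+1}-z_k$), which upgrades the coefficient $k$ to $k+1$ and completes the induction.

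The main obstacle is bookkeeping the sign and orientation conventions: one must fix consistently whether $\hat v$ sits at angle $\theta+A$ or $\theta-A$ relative to the outward normal, and whether $R_{\pi-2A}$ is CCW or CW, so that the identities $R_{\pi-2A}(z_0) = z_1$ and $R_{\pi-2A}(\hat v) = \hat u_1$ hold with matching signs. Once fixed at the base case, the induction propagates them automatically. As a sanity check one can derive the formula directly without induction: writing $\eta(x,v) = z_{k-1} + (|v|-T_0-(k-1)L)\hat u_k$ via the telescoping sum $L\sum_{j=1}^{k-1}\hat u_j = z_{k-1}-z_0$, and using $R_{k(\pi-2A)}(x+v) = z_k - T_0\hat u_k + |v|\hat u_k$, reduces the claim to the single identity $(1-k)\bigl[(z_{k-1}-z_k) + L\hat u_k\bigr] = 0$, which is immediate from $z_k - z_{k-1} = L\hat u_k$.
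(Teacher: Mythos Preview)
Your argument is correct in substance, and both your inductive step and your closing direct derivation go through. There is some index slippage in the inductive paragraph---the quantities you write as $\hat u_k$ should in several places be $\hat u_{k+1}$ (since $R_{k(\pi-2A)}\hat u_1 = \hat u_{k+1}$, not $\hat u_k$), and as literally written the ``$\hat u_k$ contributions'' do not cancel---but once the indices are straightened out the computation reduces, as your sanity check shows, to the identity $z_k - z_{k-1} = L\hat u_k$.

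Your route differs from the paper's in a way worth noting. You induct \emph{forward}: peel off the first segment $x \to z_0$, restart the trajectory at $(z_0,(|v|-T_0)\hat u_1)$ with shifted data $\theta' = \theta + (\pi-2A)$, $z'_j = z_{j+1}$, and apply the hypothesis for $k$ reflections. The paper inducts \emph{backward}: it lets $\eta_k$ denote the point reached by the $k$-reflection formula (i.e.\ the overshoot past $z_k$ if one ignores the $(k{+}1)$-st reflection) and observes the single geometric relation
\[
\eta(x,v) - z_k \;=\; R_{\pi-2A}\bigl(\eta_k - z_k\bigr),
\]
since both sides have the same length and the specular bounce at $z_k$ rotates the outgoing direction by $\pi-2A$. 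Plugging in the inductive expression for $\eta_k$ and using $R_{\pi-2A}z_j = z_{j+1}$ then gives the $(k{+}1)$-formula in two lines. The paper's fold-at-the-last-reflection trick avoids having to track the shifted restart parameters and the $T_0$-bookkeeping entirely; your approach trades that economy for the more explicit decomposition into chord directions, which pays off in your direct (non-inductive) verification at the end.
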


\begin{figure}[h]
\centering
\caption{Trajectory with 1 reflection in the circle}
\includegraphics[width=13cm,height=10cm]{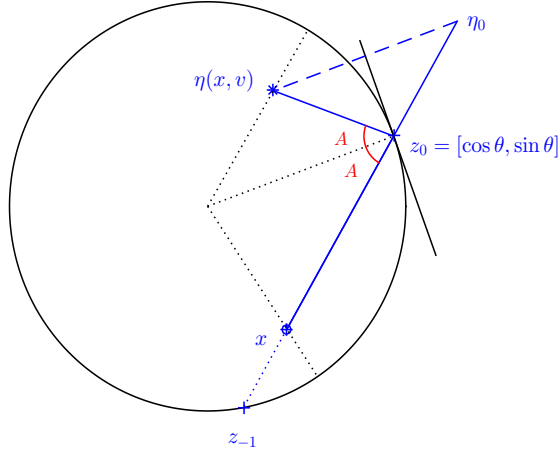}
\label{fig:1refcricle}
\end{figure}

\begin{proof}
We will prove the expressions \eqref{eq:expliciteta} by induction on the number of reflections. When $k=0$, by definition of $\eta$ we have $\eta(x,v) = x+v$ so that \eqref{eq:expliciteta} holds. \\
Let us assume \eqref{eq:etaexplicit} holds for some $k\geq 0$. Then, if we write $\eta_k = k ( z_{k-1}-z_k) + R_{k(\pi-2A)} (x+v) $, we can compute $\eta(x,v)$ after $k+1$ reflections using the relation
\begin{align*}
\eta(x,v) - z_k = R_{\pi-2A} (\eta_k - z_k)
\end{align*}
as illustrated in Figure \ref{fig:1refcricle} in the case $k=0$. By definition of $z_j$ we notice that $R_{\pi-2A} \,z_j = z_{j+1}$ hence:
\begin{align*}
\eta(x,v) &= z_k + R_{\pi-2A} \Big(  k ( z_{k-1}-z_k ) + R_{k(\pi-2A)} (x+v) - z_k \Big)\\
			   &= z_k + k ( z_k - z_{k+1}) + R_{(k+1)(\pi-2A)} (x+v) - z_{k+1}\\
			   &= (k+1) (z_k - z_{k+1}) + R_{(k+1)(\pi-2A)} (x+v)
\end{align*}
which is exactly \eqref{eq:expliciteta}. 
\end{proof}

\subsection{First and second derivatives} \label{subsec:etaballder}

We recall that $\mathfrak{D}_T$ is defined as:
\begin{equation*}
\mathfrak{D}_T(\Omega) = \Big\{ \psi\in\mathcal{C}^\infty ([0,T)\times\bO) \text{ s.t. } \psi(T,\cdot)=0 \text{ and } \forall x\in\dO : \na_x \psi(t,x)\cdot n(x) =0 \Big\}.
\end{equation*} 
This section is devoted to the proof of the following estimates on the Jacobian matrix and the second derivative of $\eta$:
\begin{lemma} \label{lemma:EtaBall}
Consider the unit ball $\Omega$. The associated function $\eta$, defined in Section \ref{sec:AuxProbSR}, satisfies
\begin{align} \label{eq:controlnaveta}
 \lVert \na_v \eta (x,v) \lVert \in L^\infty (\Omega\times\RR^d) 
\end{align}
and for all $\psi$ is in $\mathfrak{D}_T$ 
\begin{align} \label{eq:controlD2psieta}
\Big\lVert D^2_v \Big[ \psi\big( \eta(x, v )\big)\Big] \Big\lVert \in L_{F(v)}^p (\Omega\times\RR^d)
\end{align}
for $p<3$ where $\lVert \cdot\lVert$ is a matrix norm. Moreover, 
\begin{align} \label{eq:controlD2psietadelta}
\underset{v\in\RR^d}{\sup} \Big\lVert D^2_v \Big[ \psi\big( \eta(x, v )\big)\Big] \Big\lVert \in L^{2-\delta} (\Omega)
\end{align}
for any $\delta>0$. 
\end{lemma}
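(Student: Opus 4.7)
The plan is to build on the explicit representation \eqref{eq:expliciteta} from the previous section, working in dimension two (since every trajectory is confined to a 2-plane). On each open region of $\Omega\times\RR^2$ where the number of reflections $k(x,v)$ is constant, the impact angle $A(x,v)$ and the first impact angular coordinate $\theta(x,v)$ depend smoothly on $(x,v)$, so I would begin by differentiating \eqref{eq:expliciteta} directly there. The key computation is to show that the two contributions to $\na_v \eta$ that a priori scale with $k$ --- namely $k\,\na_v(z_{k-1}-z_k)$ and the derivative of the rotation $R_{k(\pi-2A)}$ applied to $x+v$ --- combine, via the geometric relation tying $|v|$, $\cos A$ and $k$, to produce only $O(1)$ terms. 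This yields \eqref{eq:controlnaveta} pointwise on each region, after which the continuity of $\eta$ across the hypersurfaces on which $k$ jumps (there the trajectory ends exactly on $\pa\Omega$) upgrades the bound to the whole phase space via a direct matching check.

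For the second derivative I would use the chain-rule identity
\begin{equation*}
D_v^2\big[\psi(\eta(x,v))\big] = \big(\na_v\eta\big)^T D^2\psi(\eta)\big(\na_v\eta\big) + \na\psi(\eta)\cdot D_v^2\eta.
\end{equation*}
The first term is controlled in $L^\infty$ by the previous step together with the smoothness of $\psi$. The delicate object is the second term, because $D_v^2\eta$ contains factors that blow up at \emph{grazing trajectories} $A\to\pi/2$, where $k\to\infty$. Here the hypothesis $\psi\in\mathfrak{D}_T$ is indispensable: the leading singular component of $D_v^2\eta$ is oriented along the outward normal $n(\eta(x,v))$ at the endpoint, so when $\eta(x,v)\in\pa\Omega$ the Neumann condition $\na\psi(\eta)\cdot n(\eta)=0$ kills the worst term; away from $\pa\Omega$ one exploits that $\na\psi\cdot n(\eta)=O(\mathrm{dist}(\eta,\pa\Omega))$ by a Taylor expansion around the nearest boundary point, which gains just enough to keep the product integrable.

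To turn these pointwise facts into the quantitative estimates \eqref{eq:controlD2psieta} and \eqref{eq:controlD2psietadelta}, I would then quantify the size of the bad set: a direct geometric count gives $k\sim |v|/\cos A$, and $\{A\in(\pi/2-\varepsilon,\pi/2)\}$ has angular measure $O(\varepsilon)$, so the negative power of $\cos A$ appearing in $D_v^2\eta$, combined with the gain from the vanishing of $\na\psi\cdot n$ at $\pa\Omega$, produces integrability precisely up to (but not including) $p=3$. The heavy-tail weight $F(v)\sim |v|^{-d-2s}$ absorbs the contribution from large velocities where $k\to\infty$. The $L^{2-\delta}(\Omega)$ estimate for the supremum in $v$ is obtained by the same analysis performed uniformly in $v$ and integrated only in $x$, the loss from $2$ to $2-\delta$ reflecting a logarithmic divergence in the worst angular variable.

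The main obstacle will be carrying out a bookkeeping of the blow-up of $D_v^2\eta$ sharp enough to see simultaneously the geometric gain from $\na\psi\cdot n=0$ and the measure of the singular stratum, uniformly across the countably many hypersurfaces on which $k$ jumps. In practice this forces an induction on $k$ (or equivalently a uniform argument in the small parameter $\pi/2 - A$) rather than a single closed-form formula valid on all of $\Omega\times\RR^2$, and that is where most of the technical work of the appendix is expected to sit.
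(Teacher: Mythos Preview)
Your outline matches the paper's approach closely: explicit differentiation of \eqref{eq:expliciteta}, the chain-rule splitting of $D_v^2[\psi\circ\eta]$, and the use of the Neumann condition on $\psi$ to cancel the leading singularity of $D_v^2\eta$ near grazing are exactly what the paper does.

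Two points are worth flagging. First, your final paragraph anticipates that the bookkeeping ``forces an induction on $k$''; in fact the paper obtains a single closed-form expression valid for every $k$. Writing $l_{in}$, $L=2\cos A$, $l_{end}$ for the initial segment, chord length, and final segment, one finds
\[
\nabla_v\eta(x,v)=\frac{2kL}{|v|}\Big[2\tfrac{l_{in}}{L}\tfrac{l_{end}}{L}-\tfrac{l_{in}+l_{end}}{L}\Big]\,SR_{k(\pi-2A)}\bar v\otimes S\bar v + R_{k(\pi-2A)},
\]
and the crucial observation is that $|v|=l_{in}+(k-1)L+l_{end}$ forces $kL/|v|\le 2$, so the $k$-dependence disappears into a uniform bound with no induction needed. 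The same mechanism controls $D_v^2\eta$: the only genuine singularity is a negative power of $L$, uniformly in $k$.

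Second, your remark that ``$F(v)\sim|v|^{-d-2s}$ absorbs the contribution from large velocities where $k\to\infty$'' slightly misplaces the role of the weight. The singular factor $1/L$ depends only on $x$ and the \emph{direction} $\bar v$, not on $|v|$; the paper writes $L=2\sqrt{(x\cdot\bar v)^2+1-|x|^2}$ and reduces the $L^p_{F(v)}(\Omega\times\RR^d)$ question to integrability of $L^{-p}$ over $\Omega\times\mathbb{S}^{1}$, with $F$ entering only through $\int F=1$. The threshold $p<3$ then drops out of a direct computation in polar coordinates, and the $L^{2-\delta}(\Omega)$ bound for $\sup_v$ is read off from the same formula since $\sup_{\bar v}L^{-1}\sim(1-|x|^2)^{-1/2}$.
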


\begin{proof}
When $k=0$, we have immediately $\na_v \eta= Id$ and the controls stated in the Lemma follow. When $k\geq 1$ we notice that for all $j$, $ z_j = R_{k(\pi-2A)} [z_{j-k}]$ so that we have
\begin{align*}
\eta(x,v) = R_{k(\pi-2A)} \big(x + v - k(z_0-z_{-1}) \big)
\end{align*}
where $z_0$ and $z_{-1}$ are illustrated in Figure \ref{fig:1refcricle}. Also, we introduce the matrix $S = 
\begin{pmatrix} 0 & -1 \\ 1 &0 \end{pmatrix}$ for the rotation of angle $\pi/2$ -- note that it commutes with the rotation matrix $R_{k(\pi-2A)}$ -- and with which the Jacobian matrix of $\eta$ with respect to $v$ takes the form
\begin{align*}
\na_v \eta(x,v) &= \Big[ S R_{k(\pi-2A)} \big( x+v - k(z_0-z_{-1}) \big) \Big] \\
&\quad \quad  \otimes \big( k \na_v (\pi-2A) \big) + R_{k(\pi-2A)} \na_v \Big( x + v - k(z_0-z_{-1}) \Big)\\
&= \Big[ S R_{k(\pi-2A)} \big( x+v - k(z_0-z_{-1}) \big) \Big] \\
&\quad \quad \otimes \big( -2k \na_v A \big) - k R_{k(\pi-2A)} \na_v\big(z_0-z_{-1}\big) + R_{k(\pi-2A)}.
\end{align*}
Now, to differentiate the angles $\theta$ and $A$ with respect to $v=(v_1,v_2)$, let us recall that for $t$ such that $|x+tv|=1$ we have
\begin{equation*}
\left\{ \begin{aligned} &x_1 + t v_1 = \cos \theta\\
									&x_2 + t v_2= \sin \theta \end{aligned} \right. 
\end{equation*}
so that $ v_2 (\cos \theta -x_1) = v_1 (\sin\theta - x_2)$, hence:
\begin{equation*}
\frac{\pa \theta}{\pa v_1} = \frac{x_2-\sin\theta}{v_1\cos\theta + v_2\sin\theta} = \frac{- tv_2}{|v|\cos A}, \hspace{0.9cm} \frac{\pa \theta}{\pa v_2} = \frac{\cos\theta-x_1}{v_1\cos\theta + v_2\sin\theta} = \frac{tv_1}{|v| \cos A}.
\end{equation*}
Moreover, $t$ satisfies $|v| \cos A = (x+tv)\cdot v = x\cdot v + t|v|^2$ which means
\begin{equation} \label{eq:patheta}
\frac{\pa \theta}{\pa v_1} = \frac{-v_2}{|v|} \frac{1}{|v|\cos A} \bigg( \cos A  -  \frac{ x\cdot v}{|v|}\bigg) , \hspace{1cm}  \frac{\pa \theta}{\pa v_2} = \frac{v_1}{|v|} \frac{1}{|v|\cos A} \bigg( \cos A  -  \frac{ x\cdot v}{|v|} \bigg).
\end{equation}
Also, by definition of $A$ we have: $|v| \sin A = (x+tv)\times v = x_1 v_2 - x_2 v_1$ therefore:
\begin{align} 
\frac{\pa A}{\pa v_1} &= \frac{ -v_1(x_1v_2-x_2 v_1 ) - x_2( v_1^2 + v_2^2)}{|v|^3\cos A}, \hspace{0.1cm} \frac{\pa A}{\pa v_2}= \frac{ -v_2(x_1v_2-x_2 v_1 ) + x_1( v_1^2 + v_2^2) }{|v|^3 \cos A}  \nonumber \\
&= \frac{-v_2}{|v|} \frac{1}{|v|\cos A}  \bigg( \frac{ x\cdot v}{|v|}\bigg) \hspace{4cm} = \frac{v_1}{|v|} \frac{1}{|v|\cos A}  \bigg( \frac{ x\cdot v}{|v|}\bigg). \label{eq:paA}
\end{align}
We now introduce the notations $l_{in}$, $L$ and $l_{end}$ defined as follows and illustrated in Figure \ref{fig:muxv}
\begin{itemize} 
\item $l_{in}$ is the distance between $x$ and the first point of reflection $z_0$:
\begin{align*}
l_{in} = t|v| = \cos A - \frac{x\cdot v}{|v|}.
\end{align*}
\item $L$ is the length between two consecutive reflections (note that it is constant because $\Omega$ is a ball):
\begin{align*}
L = 2\cos A
\end{align*}
\item $l_{end}$ is the length between the last point of reflection and the end of the trajectory, $\eta(x,v)$:
\begin{align*}
l_{end} = |v| - (k-1) L - l_{in}.
\end{align*}
\end{itemize}
\begin{figure}[h]
\centering
\caption{Notations $l_{in}$, $L$ and $l_{end}$}
\includegraphics[width=12cm,height=9cm]{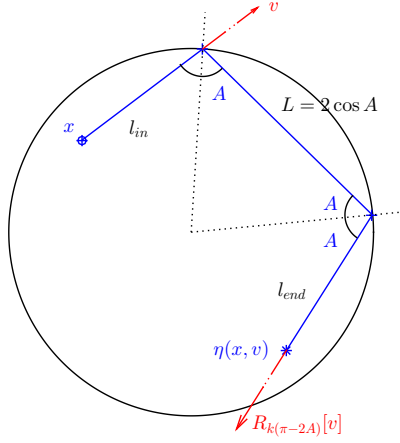}
\label{fig:muxv}
\end{figure}
With these notations, the gradients of $\theta$ and $A$ read
\begin{align} \label{eq:gradthA}
\na_v \theta = \bigg(\frac{2l_{in}}{|v| L} \bigg) S\frac{v}{|v|}, \hspace{2cm} \na_v A = \bigg( \frac{L-2l_{in}}{|v| L}  \bigg)S\frac{v}{|v|}
\end{align}
hence the Jacobian matrices of $z_0$ and $z_1$ as functions of $v$ are
\begin{align*}
&\na_v z_0 = S z_0 \otimes \na_v \theta =  \bigg( \frac{2l_{in}}{|v| L}  \bigg) Sz_0 \otimes S\frac{v}{|v|} ,\\
&\na_v z_{-1} = S z_{-1} \otimes \na_v \big( \theta - (\pi-2A)\big) =  \bigg( \frac{2(L-l_{in})}{|v| L} \bigg) S z_{-1} \otimes S \frac{v}{|v|}.
\end{align*}
Therefore, we have
\begin{align*}
\na_v \eta(x,v) &= SR_{k(\pi-2A)} \Bigg[ \bigg(\frac{2k (2l_{in}-L)}{|v|L} \bigg) \big( x + v - k (z_0-z_{-1}) \big) \Bigg] \otimes S\frac{v}{|v|} \\
&\quad - k S R_{k(\pi-2A)} \Bigg[ \bigg( \frac{2l_{in}}{|v|L}\bigg)  z_0 - \bigg( \frac{2(L-l_{in})}{|v|L} \bigg) z_{-1} \Bigg] \otimes S\frac{v}{|v|} + R_{k(\pi-2A)}\\
&=  \frac{2k}{|v|L} SR_{k(\pi-2A)} \bigg[  (2 l_{in} -L) \big(x +v - k(z_0 - z_{-1})\big)  \\
&\quad -(l_{in}-\frac{L}{2}) (z_0+z_{-1})  - \frac{L}{2} (z_0-z_{-1}) \bigg] \otimes S\frac{v}{|v|} + R_{k(\pi-2A)}\\
&= \frac{2k}{|v|L} SR_{k(\pi-2A)} \bigg[  \frac{1}{2} (2 l_{in} -L) (2x - z_0 - z_{-1}) \\
&\quad + (2l_{in} -L)\big(v-k(z_0-z_{-1})\big)- \frac{L}{2}(z_0-z_{-1} ) \bigg] \otimes S\frac{v}{|v|} + R_{k(\pi-2A)}.
\end{align*}
Finally, by definition of $z_0$ and $z_{-1}$ we see that
\begin{equation} \label{eq:distxzi}
\left\{ \begin{aligned}
&z_0-z_{-1} = L \frac{v}{|v|},\\
&x-z_0 = - l_{in} \frac{v}{|v|},\\
&x-z_{-1} = (L-l_{in} ) \frac{v}{|v|}
\end{aligned} \right.
\end{equation}
which yields
\begin{equation} \label{eq:naveta} 
\na_v \eta(x,v) = \frac{2kL}{|v|} \bigg[ 2\frac{l_{in}}{L} \frac{l_{end}}{L} - \frac{l_{in}+ l_{end}}{L} \bigg] SR_{k(\pi-2A)}\frac{v}{|v|} \otimes S \frac{v}{|v|}  + R_{k(\pi-2A)} .
\end{equation}
Introducing the notation 
\begin{align*}
\bv = \frac{v}{|v|}
\end{align*}
as well as the angular function $\Theta : \mathbb{S}^1 \mapsto \mathcal{M}_2(\RR)$ and the function $\mu_x:\RR^2\mapsto \RR$ as
\begin{align}
&\Theta (\bv) = S R_{k(\pi-2A)} \bv \otimes S\bv. \label{eq:thetafct} \\
&\mu_x(v) = \frac{2kL}{|v|} \bigg[ 2 \frac{l_{in}}{L} \frac{l_{end}}{L} -\frac{ l_{in} + l_{end}}{L}  \bigg] .\label{eq:muxv}
\end{align} 
we have 
\begin{align} \label{eq:naveta2}
\na_v \eta(x,v) = \mu_x(v) \Theta(\bv) + R_{k(\pi-2A)}.
\end{align}
Now, since $|v| = l_{in} + (k-1)L + l_{end} $ we see that when $k>1$:
\begin{align*}
&\frac{kL}{|v|} = \frac{|v|+L-l_{in}-l_{end}}{|v|} \leq 1 + \frac{|L-l_{in}-l_{end}|}{|v|} \leq 2 
\end{align*}
and also, since $0\leq l_{in}, l_{end} \leq L$ we have
\begin{align*}
-1 \leq 2\frac{l_{in}}{L} \frac{l_{end}}{L} - \frac{l_{in}+ l_{end}}{L} \leq 0
\end{align*}
so that 
\begin{equation} \label{eq:mubounded}
-4\leq -2 - 2 \frac{|L-l_{in}-l_{end}|}{|v|}  \leq \mu_x(v)  \leq 0.
\end{equation}
Since $\lVert R_{k(\pi-2A)} \lVert =\lVert S\lVert= 1$, $\na_v \eta $ is bounded uniformly in $x$ and $v$ which concludes the proof of the control of $\na_v \eta$ stated in Lemma \ref{lemma:EtaBall}. Notice that it also yields an explicit expression for the determinant:
\begin{align} \label{eq:detnaveta}
\det \na_v \eta((x,v) = 1 + \frac{2kL}{|v|} \bigg[ 2 \frac{l_{in}}{L} \frac{l_{end}}{L} -\frac{ l_{in} + l_{end}}{L}  \bigg] 
\end{align}
from which is it easy to see that
\begin{align*}
-3 \leq -1 - 2 \frac{| L-l_{in}-l_{end}| }{|v|} \leq \det \na_v \eta (x,v)\leq 1.
\end{align*}

For the second derivative, we first notice that the expression of the Jacobian matrix above depends strongly on $k$ and is not continuous when we go from $k$ to $k+1$ which is equivalent to $l_{end}$ going to $L$. Hence, we introduce the sets $E_k$ defined as
\begin{align*}
E_k &= \lbrace (x,v)\in\Omega\times\RR^d \text{ s.t. the trajectory from } (x,v) \text{ undergoes}\\ 
&\hspace{1cm} \text{ exactly } k \text{ reflections} \rbrace
\end{align*}
and the Jacobian of $\eta$ actually reads
\begin{align*}
\na_v \eta(x,v) = \underset{k\in\mathbb{N}}{\sum} \na_v \eta^k (x,v) \mathds{1}_{E_k}
\end{align*}
where $\eta^k$ is the expression \eqref{eq:naveta}. The second derivative of $\eta$ will involve a derivative of the indicator functions of the $E_k$ sets, i.e. the dirac measure of the boundary $\pa E_k$ in the direction of the discontinuity. However, the boundary of $E_k$ corresponds, by definition, to the $(x,v)$ such that $\eta(x,v)$ is on $\dO$. Hence, similarly to the half-space case (see Section \ref{subsubsec:lemmaCVSRhalfspace}) if we consider $\psi\in\mathfrak{D}_T$ then the direction of the jump will be orthogonal to $\na \psi$ at that point on $\dO$ and their product will be naught. \\
For the rest of this proof, we omit the dependence of $\eta^k$ with respect to $k$. Before computing $D^2_v \eta$ which we define as usual as:
\begin{align} \label{eq:D2formula}
D^2_v \eta (x,v) = \begin{pmatrix} \pa_{11}^2 \eta & \pa_{12}^2 \eta \\ \pa_{21}^2\eta & \pa_{22}^2 \eta  \end{pmatrix}
\end{align}
where $\pa_{ij}^2$ means the second order partial derivative with respect to $v_i$ and $v_j$, we feel it is simpler, given the form of the Jacobian matrix,  to compute $\na_v \times \na_v \eta$ where we define the product $\times$ between a vector $u$ in $\RR^2$ and a matrix $M = (m_{ij})_{1\leq i,j\leq 2}$ in $\mathcal{M}_2(\RR)$ as
\begin{equation*}
 u \times M = \left( \begin{matrix} m_{11} u & m_{12} u \\ m_{21} u & m_{22} u \end{matrix} \right) 
\end{equation*}
which means the product $u\times M $ is a vector valued matrix in $\mathcal{M}_2(\RR^2)$. We write $\na_v \eta = (\pa_j \eta_i)_{i,j}$ and have
\begin{align} \label{eq:nanaetaformula}
\na_v \times \na_v \eta = \begin{pmatrix} \na_v \pa_1 \eta_1 & \na_v \pa_2 \eta_1 \\ \na_v \pa_1 \eta_2 & \na_v \pa_2 \eta _2   \end{pmatrix}.
\end{align}
Using expression \eqref{eq:naveta} we have:
\begin{equation} \label{eq:nanaetaexplicit}
\na_v \times \na_v \eta(x,v) = \na_v \mu_x(v) \times \Theta (\bv) + \mu_x(v) \na_v \times \Theta (\bv) + 2k \na_v A \times S R_{k(\pi-2A)}
\end{equation}
Let us look at each of the terms individually and focus on singularities that might cause trouble for the integrability in $L^2_{F(v)}(\Omega\times\RR^2)$, which in fact will arise when we get close to the grazing set, i.e. when $L$ (as well as $l_{in}$ and $l_{end}$) goes to 0 or, equivalently, when $k$ goes to infinity. The simplest term to handle is the last one since we have, using \eqref{eq:gradthA}:
\begin{align} \label{eq:D2navA}
2k \na_v A  =  \frac{1}{L}\bigg( \frac{2k\big(L-2l_{in}\big) }{|v|} \bigg) S \bv
\end{align}
so that 
\begin{align*}
2k\na_v A \times SR_{k(\pi-2A)} := \frac{\alpha_A}{L} S\bv \times SR_{k(\pi-2A)}
\end{align*}
where $\alpha_A$ is uniformly bounded in $x$ and $v$. For the second term, 
\begin{align*}
\na_v \times \Theta (\bv) &= \na_v\times  \Big( S R_{k(\pi-2A)} \bv\otimes S \bv \Big)
\end{align*}
we introduce the extension of the dyadic product defined, for $u\in\RR^2$ and $M\in\mathcal{M}_2(\RR)$ as:
\begin{align*}
u \otimes M = \begin{pmatrix}  u_1 \left[ \begin{matrix} m_{11} \\ m_{12} \end{matrix} \right] & u_1 \left[ \begin{matrix} m_{21} \\ m_{22} \end{matrix} \right]  \\[8pt]   u_2 \left[ \begin{matrix} m_{11} \\ m_{12} \end{matrix} \right]  & u_2 \left[ \begin{matrix} m_{21} \\ m_{22} \end{matrix} \right]     \end{pmatrix}
\end{align*}
which is rather natural if one notices that for two vectors $u$ and $v$, $u\otimes v = u \, v^T$, and we also define its commuted form $M\otimes u = ( u \otimes M)^T$. With these notation, we have
\begin{align*}
\na_v \times \Theta (\bv) = \big( \na_v SR_{k(\pi-2A)} \bv \big) \otimes S\bv + SR_{k(\pi-2A)} \bv \otimes \big( \na_v S\bv\big)
\end{align*}
where on the one hand
\begin{align*}
\na_v S\bv = \frac{-1}{|v|} \bv \otimes S\bv
\end{align*}
and on the other hand
\begin{align*}
\na_v \big( SR_{k(\pi-2A)} \bv\big) &= S\Big( SR_{k(\pi-2A)} \bv \otimes \na_v \big( k(\pi-2A) \big) + R_{k(\pi-2A)} \na_v \bv \Big)\\
														&= \frac{1}{L} \bigg( \frac{2k(L-2l_{in})}{|v|} - \frac{L}{|v|} \bigg) R_{k(\pi-2A)} \bv \otimes S\bv.
\end{align*}
We get
\begin{align} \label{eq:D2navTheta}
\na_v \times \Theta(\bv) &= \frac{1}{L} \bigg( \frac{2k(L-2l_{in})}{|v|} - \frac{L}{|v|} \bigg) \Big( R_{k(\pi-2A)} \bv \otimes S\bv\Big) \otimes S\bv \nonumber \\
& - \frac{1}{|v|} SR_{k(\pi-2A)} \bv \otimes \Big( \bv \otimes S\bv \Big) 
\end{align}
so that 
\begin{align*}
\mu_x (v) \na_v \times \Theta(\bv) = \frac{\alpha_\theta}{L} ( R_{k(\pi-2A)} \bv \otimes S\bv ) \otimes S\bv + O(1)
\end{align*}
when we are close to the grazing where, once again, $\alpha_\theta$ is uniformly bounded in $x$ and $v$. Note, in fact, that $\alpha_\theta = \alpha_A + O(L)$. Let us also note that the extension of the dyadic we defined is not quite associative in the sense that if $u$, $v$ and $w$ are vectors then
\begin{align*}
(u \otimes v ) \otimes w = u \otimes ( w\otimes v) 
\end{align*}
which we will keep in mind when we compute $D^2\eta(x,v)$. Finally, for the first term in \eqref{eq:nanaetaexplicit} we notice that since $l_{in} = |x-z_0|=\sqrt{1+|x|^2-2x\cdot z_0}$ we have
\begin{align*}
\na_v l_{in} &= \frac{-2 \na_v ( x \cdot z_0) }{l_{in}} = \frac{-2 x\cdot Sz_0}{l_{in}} \na_v \theta = \frac{-4 x\cdot Sz_0}{|v| L	} S\bv
\end{align*} 
where $x\cdot S z_0 = x\cdot S(x+tv) = t|v| x\cdot Sv/|v| = l_{in} \sin A$ so that in fact
\begin{align*}
\na_v l_{in} = \frac{-4 l_{in} \sin A}{|v| L	} S\bv.
\end{align*} 
Moreover, $L=2\cos A$ so we have 
\begin{align*}
\na_v L = \frac{-2(L-2l_{in})\sin A}{|v|L} S\bv
\end{align*}
and finally, $l_{end} = |v| - (k-1) L - l_{in}$ therefore
\begin{align*}
\na_v l_{end} &=\bv + \frac{1}{|v| L} \big( 2(k-1) (L-2 l_{in} ) \sin A + 4 l_{in} \sin A \big) S\bv\\
						&= \bv + \frac{2 \sin A }{L } \bigg( \frac{(k-1)(L-2l_{in})}{|v|} + \frac{2 l_{in}}{|v|} \bigg) S\bv.
\end{align*}
Note that unlike $\na_v L$ and $\na_v l_{in}$, the gradient of $l_{end}$ diverges in norm for small $L$ (i.e. close to the grazing set) because the coefficient $\sin A / L$ goes to infinity. Differentiating $\mu_x(v)$ we get
\begin{align}
&\na_v \mu_x(v) = \na_v \bigg( \frac{2kL}{|v|} \bigg) \bigg[ 2 \frac{l_{in}}{L} \frac{l_{end}}{L} -\frac{ l_{in} + l_{end}}{L} \bigg] \nonumber \\ 
&+ \frac{2kL}{|v|} \bigg[ \na_v l_{end} \Big( 2\frac{l_{in}}{L^2} - \frac{1}{L} \Big) + \na_v l_{in} \Big(2\frac{l_{end}}{L^2} - \frac{1}{L} \Big) + \na_v L \Big( \frac{l_{in}+l_{end}}{L^2} - 4 \frac{l_{in} l_{end}}{L^3} \Big) \bigg] \nonumber\\
&= \frac{-1}{|v|L} \mu_x(v) \Big( 2\Big(1-2\frac{l_{in}}{L} \Big) S\bv - L \bv \Big) \nonumber \\
&\quad + \frac{1}{L^2} \bigg(\frac{2kL}{|v|}\bigg) \bigg(2\frac{l_{in}}{L} -1 \bigg) \bigg[L \bv + 2 \sin A  \bigg( \frac{(k-1)(L-2l_{in})}{|v|} + \frac{2 l_{in}}{|v|} \bigg) S\bv \bigg] \nonumber \\
&\quad + \frac{1}{L} \frac{2kL}{|v|^2 } \bigg[ \frac{4l_{in} \sin A}{L}  \bigg( 1 - 2 \frac{l_{end}}{L} \bigg) - 2\sin A \bigg(1 - 2\frac{l_{in}}{L}\bigg) \bigg( \frac{l_{in}+l_{end}}{L} - 4 \frac{l_{in} l_{end}}{L^2} \bigg) \bigg] S\bv .\label{eq:D2navmu}
\end{align}
Introducing uniformly bounded functions $\alpha^i_\mu$, $i\in\lbrace1,2,3\rbrace$ we get
\begin{align*}
\na_v \mu_x (v) \times \Theta(\bv) &= \frac{1}{L} \Big( \alpha_\mu^1 S\bv + \alpha_\mu^2 \bv \Big)  \times ( SR_{k(\pi-2A)} \bv \otimes S\bv ) \\
& + \frac{1}{L^2} \alpha_\mu^3 S\bv \times ( SR_{k(\pi-2A)} \bv \otimes S\bv ) .
\end{align*}
Together, all three terms yields 
\begin{align*}
\na_v \times \na_v \eta(x,v)& = \frac{1}{L} \Big( \alpha_\mu^1 S\bv + \alpha_\mu^2 \bv \Big)  \times ( SR_{k(\pi-2A)} \bv \otimes S\bv )  \\
&+ \frac{1}{L^2} \alpha_\mu^3 S\bv \times ( SR_{k(\pi-2A)} \bv \otimes S\bv )  \\
&+   \frac{\alpha_\theta}{L} ( R_{k(\pi-2A)} \bv \otimes S\bv ) \otimes S\bv  + \frac{\alpha_A}{L} S\bv \times SR_{k(\pi-2A)} + O(1)
\end{align*}
Identifying the terms in \eqref{eq:D2formula} with those of \eqref{eq:nanaetaformula} we get
\begin{align}\label{eq:D2explicitgrazing}
D^2 \eta(x,v) &= \frac{1}{L} SR_{k(\pi-2A)} \bv \times \Big( \alpha_\mu^1 (S\bv \otimes S\bv) + \alpha_\mu^2 (\bv \otimes S\bv) + \frac{1}{L} \alpha_\mu^3 (S\bv \otimes S\bv ) \Big)\nonumber \\
&+ \frac{1}{L} \Big(  \alpha_{\theta} R_{k(\pi-2A)}\bv \times (S\bv \otimes S\bv) + \alpha_A S\bv \otimes SR_{k(\pi-2A)} C  \Big) + O(1)
\end{align} 
where $C$ is the conjugation matrix: $C = \begin{pmatrix} 1 & 0 \\ 0 & -1 \end{pmatrix}$. Now, if we want to integrate $1/L$ in $L^p_{F(v)}(\Omega\times\RR^2)$ for some $p>0$ we first write $L$ in terms of $x$ and $v$ using the relations $L=2\cos A$, $|v| \cos A = x\cdot v + t |v|^2$ and the fact that $t$ solve $|x+tv|^2=1$ which yield
\begin{align} \label{eq:Lgrazing}
L = 2 \sqrt{ \big( x\cdot \bv \big)^2 + \big(1-|x|^2\big) }.
\end{align}
Therefore using polar change of variables
\begin{align*}
& \underset{\Omega\times\RR^2}{\iint} \Big(\frac{2}{L}\Big)^p F(v) \d xv =  \underset{\Omega\times\RR^2}{\iint} \frac{1}{\Big( (x\cdot\bv)^2 + (1-|x|^2) \Big)^{p/2}} F(v)\d xv \\
&\hspace{1cm} = \int_0^{1} \int_0^{2\pi} \int_0^{2\pi} \frac{\rho_x}{\big( 1- \rho_x^2 \sin^2(\theta_v-\theta_x) \big)^{p/2}} \, \text{d}\rho_x \, \text{d}\theta_x \, \text{d}\theta_v  \int_{\RR^+} F(\rho_v) \rho_v \text{d}\rho_v 
\end{align*}
where, since $F$ is radial and normalized, $\int_{\RR^2} F(v) \d v = 2\pi \int_{\RR} \rho_v F(\rho_v) \text{d}\rho_v = 1$. Note, in fact, that since $L$ does not depend on the norm of $|v|$, the integrability in $L^2_{F(v)}(\Omega\times\RR^2)$ is equivalent to the integrability in $L^2(\Omega\times\mathbb{S}^{1} )$ where $\mathbb{S}^1$ is the unit circle in $\RR^2$. Expanding the denominator, we have
\begin{align}
&\underset{\Omega\times\RR^2}{\iint} \Big(\frac{2}{L}\Big)^{p} F(v) \d xv \\
&= \frac{1}{2\pi} \int_0^{1} \int_0^{2\pi} \int_0^{2\pi} \frac{\rho_x}{\big( 1- \rho_x| \sin(\theta_v-\theta_x)| \big)^{p/2} \big( 1+ \rho_x| \sin(\theta_v-\theta_x)| \big)^{p/2}} \, \text{d}\rho_x \, \text{d}\theta_x \, \text{d}\theta_v\nonumber\\
&\leq  C  \int_0^{1} \int_0^{2\pi} \int_0^{2\pi} \frac{\rho_x}{\big( 1- \rho_x| \sin(\theta_v-\theta_x)| \big)^{p/2}} \, \text{d}\rho_x \, \text{d}\theta_x \, \text{d}\theta_v\nonumber\\
&\leq   2 \pi C  \int_0^{1} \int_0^{2\pi} \frac{\rho_x}{\big( 1- \rho_x| \sin\alpha| \big)^{p/2}} \, \text{d}\rho_x \, \text{d}\alpha \nonumber\\
&\leq  \tilde{C}  \int_0^1 \int_0^{\sqrt{1-x_2^2}} \frac{1}{\big( 1 - x_2)^{p/2}} \d x_1 \d x_2 \nonumber \\
&\leq  \tilde{C} \int_0^1 \frac{1}{(1-x_2)^{p/2-1/2}} \d x_2  \label{eq:Lintegrability}
\end{align}
hence $1/L$ will be in $L^p_{F(v)}(\Omega\times\RR^2)$ if $p < 3$. \\
Moreover, if we take $\psi$ in $\mathfrak{D}_T$ (defined in beginning of this section) then we have 
\begin{align*}
D^2_v \Big[ \psi\big( \eta(x,v)\big)\Big]  = D^2\eta(x,v) \na\psi\big(\eta(x,v)\big) + \na_v\eta(x,v)^T D^2_v \psi\big(\eta(x,v)\big) \na_v\eta(x,v).
\end{align*}
The second term is uniformly bounded in $x$ and $v$ by \eqref{eq:controlnaveta} so it belongs to $L^p_{F(v)}(\Omega\times\RR^2)$ for any $p\leq \infty$. Furthermore, for the first term, we notice that for any $u\in\RR^2$ and $M\in \mathcal{M}_2(\RR)$ we have
\begin{align*}
u \times M \na \psi = (u \cdot \na \psi ) M.
\end{align*}
Thus the first term reads
\begin{align*}
&D^2\eta(x,v)  \na\psi\big(\eta(x,v)\big) \\
&=\frac{1}{L} \Big( \alpha_A (S\bv \otimes SR_{k(\pi-2A)} C)\na\psi\big(\eta(x,v)\big) + \alpha_\theta \big[ R_{k(\pi-2A)}\bv \cdot \na\psi\big(\eta(x,v)\big) \big] S\bv \otimes S\bv \Big)\\
&\quad + \frac{1}{L} \big[ SR_{k(\pi-2A)}\bv \cdot \na\psi\big(\eta(x,v)\big) \big] \Big( \alpha_\mu^1 (S\bv \otimes S\bv) + \alpha_\mu^2 (\bv \otimes S\bv) + \frac{1}{L} \alpha_\mu^3 (S\bv \otimes S\bv ) \Big) \\
& \quad + O(1).
\end{align*}
Recall that on the boundary, $\na \psi(x,v) \cdot n(x) =0$ hence, by the regularity of $\psi$, when $\eta(x,v)$ is close the boundary we have
\begin{align*}
\na \psi\big(\eta(x,v)\big) = \tilde{\tau}\big( \eta(x,v) \big) + O\Big(\text{dist}\big( \eta(x,v) , \dO \big)\Big)
\end{align*}
where $\tilde{\tau}$ is the extension of the tangent $\tau(x)$ of $\dO$ at $x\in\dO$ which, since we are in the unit ball, is explicitly $\tilde{\tau}\big(\eta(x,v)\big) = \tau\big( \eta(x,v)/ |\eta(x,v)| \big)$ when $|\eta(x,v)| \neq 0$. Moreover, when we start close to the grazing set the trajectory stays close to the grazing set (because $A$ is constant close to $\pi/2$), which means $R_{k(\pi-2A)}\bv$ stays close to $\tilde{\tau}(\eta(x,v))$ and in fact it will be furthest from the tangent when $\eta(x,v)$ is on the boundary where we have
\begin{align*}
R_{k(\pi-2A)} \bv &= \big( \cos A \big)  n\big(\eta(x,v)\big) + \big( \sin A  \big) \tau\big(\eta(x,v)\big)\\
			&= \Big( \frac{1}{2} L \Big) n\big(\eta(x,v)\big) + \Big(1- \frac{L^2}{4}\Big)^{1/2} \tau \big(\eta(x,v)\big)
\end{align*}
so that we have
\begin{align*}
SR_{k(\pi-2A)}\bv = n\big(\eta(x,v)\big)  + O(L).
\end{align*}
Finally, we can bound the distance between $\eta(x,v)$ and the boundary in terms of $L$ because we are in a circle so the $\eta(x,v)$ is furthest from the boundary when $l_{end} = L/2$ and the Pythagorean theorem tells us in that case
\begin{align*}
\Big( 1 - \text{dist}\big(\eta(x,v), \dO \big) \Big)^2 + \Big( \frac{L}{2} \Big)^2 = 1
\end{align*}
so that we have all along the trajectory
\begin{align*}
\text{dist}\big(\eta(x,v), \dO\big) = 1 - \sqrt{1- \frac{L^2}{4}}  \underset{L\ll 1}{=} \frac{L^2}{4} + o(L^2).
\end{align*}
All together, these estimates yields
\begin{align*}
SR_{k(\pi-2A)}\bv \cdot \na \psi\big(\eta(x,v)\big) \underset{L\ll 1}{=} O(L)
\end{align*}
so that 
\begin{align} \label{eq:D2etanapsi}
D^2\eta(x,v) &\na\psi\big(\eta(x,v)\big) \underset{L\ll 1}{=} \frac{1}{L} \Big( \alpha_A ( S\bv \otimes SR_{k(\pi-2A)}C ) \na\psi\big(\eta(x,v)\big) \\
&\hspace{1cm} + \alpha_\theta \Big(R_{k(\pi-2A)} \bv \cdot \na\psi\big(\eta(x,v)\big) \Big)   S\bv \otimes S\bv + \alpha_\mu^3 S\bv \otimes S\bv \Big)  + O(1). \nonumber
\end{align}
and from \eqref{eq:Lintegrability} it follows in particular that $\big\lVert D^2\eta(x,v) \na \psi \big(\eta(x,v)\big)\big\lVert   \in L_{F(v)}^{p} (\Omega\times\RR^2)$ for all $p<3$ where $\lVert \cdot \lVert$ is any matrix norm. \\
However this integrability does not hold uniformly in $v$. Indeed, if we take the supremum over $v$ in $\RR^d$ of the second derivative then, close to the boundary, it behave like $1/L = 1/ \sqrt{ 1-|x|^2}$ which is in $L^{2-\delta}(\Omega)$ for any $\delta>0$ but not in the limit when $\delta=0$, as stated in \eqref{eq:controlD2psietadelta}. 
\end{proof}

\subsection{Fractional Laplacian along the trajectories} \label{app:subsecDels}
This section of the Appendix is devoted to the proof of the following Lemma which follows from Lemma \ref{lemma:EtaBall}:
\begin{replemma}{lemma:delsint}
There exists $p>2$ such that 
\begin{align*}
\Delsv \Big[ \psi\big( t,\eta(x, v) \big)\Big]  \in L^p_{F(v)} (\Omega\times\RR^d).
\end{align*}
\end{replemma}
\begin{proof}
As we did several times before in this paper, we can split the integral formulation of the fractional Laplacian, for $R>0$, as follows
\begin{align*}
\Delsv \Big[ \psi\big( t,\eta(x, v) \big)\Big] &= c_{d,s} P.V. \underset{|w|\leq R}{\int} \frac{\psi\big(\eta(x,v)\big) - \psi\big( \eta(x,v+w)\big)}{|w|^{d+2s}} \d w \\
&\hspace{1cm}+ c_{d,s} \underset{|w|\geq R}{\int} \frac{\psi\big(\eta(x,v)\big) - \psi\big( \eta(x,v+w)\big)}{|w|^{d+2s}} \d w 
\end{align*}
and the integral over $|w|\geq R$ is immediately integrable in $L^p_{F(v)}(\Omega\times\RR^d)$ for any $p$ thanks to the boundedness of $\psi$ in $L^\infty(\Omega)$ and the fact that $F$ is normalized. For the integral over $w\leq R$ we do a second order Taylor-Lagrange expansion, as we did for $\chi_x$ in section \ref{subsubsec:lemmaCVSRhalfspace}, in order to write for some $z$ and $\tilde{z}$ in the ball centred at $v$ of radius $|w|$:
\begin{align*}
&P.V. \underset{|w|\leq R}{\int} \frac{\psi\big(\eta(x,v)\big) - \psi\big( \eta(x,v+w)\big)}{|w|^{d+2s}} \d w \\
\quad &= \frac{1}{2} \underset{|w|\leq R}{\int} \frac{w \cdot \Big( D^2\big[ \psi(\eta(x,\cdot))\big] (z) + D^2\big[ \psi(\eta(x,\cdot))\big] (\tilde{z}) \Big) w  }{|w|^{d+2s}} \d w.
\end{align*}
Let us focus on the term with $z$, the one with $\tilde{z}$ can obviously be handled similarly. Using \eqref{eq:D2etanapsi} we have through straightforward computation
\begin{align*}
w \cdot  D^2\Big[ \psi\big(\eta(x,\cdot)\big)\Big] (z)  w &= \frac{1}{L} \bigg[ \alpha_A (w\cdot S\bz ) \Big( \na \psi(\eta(x,z)) \cdot SR_{k(\pi-2A)} w \Big) \\
& +  \Big(\alpha_\theta R_{k(\pi-2A)} \bz \cdot  \na\psi\big(\eta(x,z)\big) + \alpha_\mu^3 \Big) (S \bz \cdot w )^2 \bigg] + C|w|^2
\end{align*}
where $\bz =z/|z|$ and $C=C(x,z)$ is uniformly bounded in $x$ and $z$. Introducing $\bw = w/|w|$ as well as $\lambda_1$ and $\lambda_2$ to simplify the notations, this yields
\begin{align*}
&w \cdot  D^2\Big[ \psi\big(\eta(x,\cdot)\big)\Big] (z)  w \\
\quad &= \frac{|w|^2 }{L} \Big(\bw\cdot S\bz\Big) \Big( \lambda_1 (S\bz\cdot \bw) + \lambda_2 (SR_{k(\pi-2A)} \bw \cdot  \na\psi\big(\eta(x,z)\big)  \Big) + C|w|^2.
\end{align*}
Therefore, using \eqref{eq:Lgrazing} we have
\begin{align*}
&\bigg| \underset{|w|\leq R}{\int} \frac{w \cdot D^2\big[ \psi(\eta(x,\cdot))\big] (z) w  }{|w|^{d+2s}} \d w \bigg|\\
\quad &=\bigg| \underset{|w|\leq R}{\int} \frac{(\bw\cdot S\bz)\big(\lambda_1 (\bw\cdot S\bz) + \lambda_2 \big(SR\bw \cdot \na\psi\big(\eta(x,z)\big)\big) }{\sqrt{x\cdot \bz + 1-|x|^2}} \frac{\d w}{|w|^{d+2s-2}} \bigg| \\
&\leq R^{2s}\underset{\mathbb{S}^{1}}{\int} \frac{C_\psi}{\sqrt{x\cdot \bz + 1-|x|^2}} \text{d}\bz 
\end{align*}
where $C_\psi = \sup_{|w|\leq R} \Big( (\bw\cdot S\bz)\big( \lambda_1 (\bw\cdot S\bz) + \lambda_2 \big(SR\bw \cdot \na\psi\big(\eta(x,z)\big)\big)\Big)$ is uniformly bounded in $x$ and $\bz$. Thus, we have for $p>0$:
\begin{align*}
\underset{\Omega\times\RR^d}{\iint} \Big| \Delsv \Big[ \psi\big( t,\eta(x, v) \big)\Big] \Big|^{p}& F(v) \d xv  \leq  \underset{\Omega\times\RR^d}{\iint} \bigg| \underset{\mathbb{S}^{1}}{\int} \frac{2R^{2s} C_\psi }{\sqrt{x\cdot \bz + 1-|x|^2}} \text{d}\bz \bigg|^{p} F(v) \d xv \\
&\leq \big( 2R^{2s} C_\psi \big)^p \underset{\Omega\times\RR^d\times\mathbb{S}^{1}}{\iiint} \frac{1}{(x\cdot \bz + 1-|x|^2)^{p/2}} F(v) \text{d}\bz  \d xv 
\end{align*}
which we know to be finite if $p<3$ by \eqref{eq:Lintegrability} since $F$ is radial.
\end{proof}

\subsection{Change of variable} \label{subsec:etaballcov}

\begin{replemma}{lem:changeofvariable}
The change for variable $F$ given by 
\begin{align}
F \begin{pmatrix} x \\ v \end{pmatrix} = \begin{pmatrix} \eta(x,v) \\ - \big[\na_v \eta(x,v)\big] v \end{pmatrix}
\end{align}
is precisely the change of variable such that $\eta ( F(x,v) ) = x$ and the trajectory described by $\eta$ starting at $\eta(x,v)$ with velocity $- \big[\na_v \eta(x,v)\big] v$ is exactly the trajectory from $(x,v)$ backwards. Moreover, for all $(x,v)$:
\begin{equation}
\det \na F(x,v) = 1.
\end{equation}
\end{replemma}

\begin{proof}
From the explicit expression of $\na_v\eta(x,v)$ given above in \eqref{eq:naveta}, we see 
\begin{align*}
- \big[\na_v \eta(x,v)\big] v = - R_{k(\pi-2A)} v
\end{align*}
and by construction, see \eqref{eq:expliciteta}, we know the ending velocity of the trajectory is $R_{k(\pi-2A)} v$, see Figure \ref{fig:muxv} for a representation, so the trajectory from $F(x,v)$ is indeed the backward trajectory from $(x,v)$ which in particular implies that $\eta(F(x,v)) = x$. \\
In order to compute the determinant of $F$ we need the Jacobian with respect to $x$ of $\eta$. Following the same line of arguments as for the Jacobian in $v$ we write
\begin{align*}
\na_x \eta(x,v) &= \Big[ S R_{k(\pi-2A)} \big( x+v - k(z_0-z_{-1}) \big) \Big] \otimes \big( -2k \na_x A \big) \\
						& \quad - k R_{k(\pi-2A)} \na_x\big(z_0-z_{-1}\big) + R_{k(\pi-2A)}.
\end{align*}
where, from the relations we used to derive \eqref{eq:patheta} and \eqref{eq:paA} we have
\begin{align} \label{eq:gradxthA}
\na_x \theta =  \frac{2}{L}  S\frac{v}{|v|}, \hspace{2cm} \na_x A = \frac{-2}{L}  S\frac{v}{|v|}
\end{align}
which yields
\begin{align} \label{eq:gradxzi}
\na_x z_0 = \frac{2}{L} Sz_0 \otimes S\frac{v}{|v|}, \hspace{2cm} \na_x z_{-1}= \frac{-2}{L} Sz_{-1} \otimes S \frac{v}{|v|}.
\end{align}
As a consequence
\begin{align*}
\na_x \eta(x,v) &= SR_{k(\pi-2A)} \bigg[ \frac{4k}{L} \big( v- k(z_0-z_{-1}) \big) + \frac{2k}{L} \big(2s - z_0 - z_{-1}\big) \bigg] \otimes S\frac{v}{|v|} \\
&\quad + R_{k(\pi-2A)}
\end{align*}
and using \eqref{eq:distxzi} we get
\begin{align}\label{eq:naxeta}
\na_x \eta(x,v) = 2k\bigg( 2\frac{l_{end}}{L} -1\bigg) SR_{k(\pi-2A)} \frac{v}{|v|} \otimes S\frac{v}{|v|} + R_{k(\pi-2A)}.
\end{align}
We also need the Jacobian matrices of $-\big[\na_v \eta(x,v)\big] v$ which are
\begin{align} 
&\na_x\bigg( -\big[\na_v \eta(x,v)\big] v \bigg) = \frac{-4k|v|}{L} SR_{k(\pi-2A)}\frac{v}{|v|} \otimes S\frac{v}{|v|}, \label{eq:naxnaveta}\\
&\na_v \bigg(-\big[\na_v \eta(x,v)\big] v\bigg) = 2k\bigg(1-2\frac{l_{in}}{L} \bigg) SR_{k(\pi-2A)}\frac{v}{|v|} \otimes S\frac{v}{|v|} - R_{k(\pi-2A)}.\label{eq:navnaveta}
\end{align}
With appropriate coefficient $\alpha_x$, $\alpha_v$, $\beta_x$, $\beta_v$ (which are functions of $x$ and $v$), using the angular function $\Theta$ defined in \eqref{eq:thetafct} and writing $R$ instead of $R_{k(\pi-2A)}$ we can then write the Jacobian of $F$ as the following sum of block matrices
\begin{align*}
\na F(x,v) &= \begin{pmatrix} \na_x\eta (x,v) & \na_v\eta(x,v) \\ \na_x \bigg( -\big[\na_v \eta(x,v)\big] v \bigg)  & \na_v \bigg( -\big[\na_v \eta(x,v)\big] v \bigg) \end{pmatrix}  \\
&= \begin{pmatrix} \alpha_x \Theta & \alpha_v \Theta \\ \beta_x \Theta & \beta_v \Theta \end{pmatrix} + \begin{pmatrix} R & R \\ 0 & -R \end{pmatrix}.
\end{align*}
Now, we write $R^{-1}\Theta = S\frac{v}{|v|}\otimes S\frac{v}{|v|} := N$ 
%which has determinant null: $$\det N = v_1^2 v_2^2 - (-v_1v_2)(-v_1v_2) = 0$$
which yields the relation
\begin{align*}
\det\Bigg( \begin{pmatrix} R^{-1} &  R^{-1} \\  0 & -R^{-1} \end{pmatrix} \na F(x,v) \Bigg) = \det \Bigg( \begin{pmatrix} (\alpha_x+\beta_x) N  & (\alpha_v+\beta_v) N\\ -\beta_x N & -\beta_v N   \end{pmatrix} + \begin{pmatrix} Id & 0 \\ 0 & Id \end{pmatrix} \Bigg)
\end{align*}
where we also notice that 
\begin{align*}
\det  \begin{pmatrix} R^{-1} &  R^{-1} \\  0 & -R^{-1} \end{pmatrix} = \det \Big( - R^{-2} \Big) = 1
\end{align*} 
because it is a rotation matrix in dimension 2. Therefore,  
\begin{align*}
\det \na F (x,v) = \det \begin{pmatrix} (\alpha_x+\beta_x) N +Id  & (\alpha_v+\beta_v) N\\ -\beta_x N & -\beta_v N +Id  \end{pmatrix} .
\end{align*}
Finally, it is rather simple to find the eigenvalues of this matrix. Indeed, since $N v = (v\cdot S\tfrac{v}{|v|} ) S\tfrac{v}{|v|} = 0$ we see that the 4-dimensional vectors $(v,0)$ and $(0,v)$ are both eigenvectors associated with the eigenvalue 1. Moreover, we notice that $N Sv = Sv$ so we solve for $\lambda$ and $\mu$ the equation
\begin{align*}
\begin{pmatrix} (\alpha_x+\beta_x) N +Id  & (\alpha_v+\beta_v) N\\ -\beta_x N & -\beta_v N +Id  \end{pmatrix} \begin{pmatrix} Sv \\ \lambda Sv \end{pmatrix} = \mu \begin{pmatrix} Sv \\ \lambda Sv\end{pmatrix}
\end{align*}
and find the two remaining eigenvalues:
\begin{align*}
\mu_1 = 1- 2k \big( k + \sqrt{k^2-1} \big) \\
\mu_2 = 1- 2k \big( k - \sqrt{k^2-1} \big). 
\end{align*}
Note that in order to find those values we used the relations $\alpha_x + \beta_x - \beta_v = -4k^2$ and $\beta_v \alpha_x - \beta_x\alpha_v = -4k^2$ which are deduced easily from the expressions \eqref{eq:naveta} \eqref{eq:naxeta} \eqref{eq:naxnaveta} and \eqref{eq:navnaveta}. In the end, we get the determinant of $\na F(x,v)$:
\begin{align*}
\det \na F(x,v) = \Big( 1- 2k \big( k + \sqrt{k^2-1} \big)\Big)\Big( 1- 2k \big( k - \sqrt{k^2-1} \big)\Big) = 1.
\end{align*}

\end{proof}

\bigskip 

\noindent
{\textbf{Conflict of Interest:} The authors have no conflicts of interest to declare.}

\section*{Acknowledgements}
This work has been supported by the European Research Council Grant ERC-2011-StG Mathematical Topics of Kinetic Theory. The author wishes to thank his Ph.D. advisors, Antoine Mellet and Cl\'{e}ment Mouhot, as well as Ariane Trescases, Emeric Bouin and Marc Briant for their simulating and very interesting discussions on the different topics of this paper.

\bibliographystyle{siam}

\bibliography{biblio.bib}

\end{document}